\newcommand{\QQ}{\mathbb{Q}}
\newcommand{\RR}{\mathbb{R}}
\newcommand{\OO}{\mathcal{O}}
\newcommand{\Mov}{\overline{\mov}}
\newcommand{\XX}{\mathcal{X}}
\newcommand{\YY}{\mathcal{Y}}
\newcommand{\ZZ}{\mathcal{Z}}
\newcommand{\PP}{\mathcal{P}}
\newcommand{\WW}{\mathcal{W}}
\newcommand{\CC}{\mathcal{C}}
\newcommand{\volbdff}{\vol_{\operatorname{BdFF}}}
\newcommand{\volmov}{\vol_{\operatorname{mov}}}
\DeclareMathOperator{\Div}{Div} \DeclareMathOperator{\mult}{mult}
\DeclareMathOperator{\ord}{ord} \DeclareMathOperator{\vol}{vol}
\DeclareMathOperator{\mov}{Mov} \DeclareMathOperator{\Bs}{Bs}
\DeclareMathOperator{\SBs}{\textbf{B}}
\DeclareMathOperator{\codim}{codim} \DeclareMathOperator{\im}{im}
\DeclareMathOperator{\Env}{Env} 
\DeclareMathOperator{\Proj}{\textbf{Proj}}
\newtheorem{thm}{Theorem}[section]
\newtheorem{lem}[thm]{Lemma}
\newtheorem{prop}[thm]{Proposition}
\newtheorem{que}[thm]{Question}
\theoremstyle{definition}
\newtheorem{defn}[thm]{Definition}
\newtheorem{ex}[thm]{Example}
\newtheorem{notn}[thm]{Notation}
\theoremstyle{remark}
\newtheorem{rmk}[thm]{Remark}
\numberwithin{equation}{section}
\begin{document}

\title{On isolated singularities with a noninvertible finite endomorphism}

\author{Yuchen Zhang}
\address{Department of Mathematics, University of Michigan, Ann Arbor, MI 48109, USA}
\email{zyuchen@umich.edu}



\date{\today}


\keywords{Isolated singularities, non-$\mathbb{Q}$-Gorenstein,
$\sigma$-decomposition, diminished base locus, movable modification}

\begin{abstract}
We prove that if $\phi:(X,0)\to(X,0)$ is a finite endomorphism of an
isolated singularity such that $\deg(\phi)\geq 2$ and $\phi$ is
\'etale in codimension 1, then $X$ is $\QQ$-Gorenstein and log
canonical.
\end{abstract}

\maketitle

\section{Introduction}

Let us start with an easy example. Let $C$ be a smooth curve of
genus $g$. By an argument using Riemann-Hurwitz Theorem, one can see
that $C$ has a finite endomorphism $\phi$ of degree $\geq 2$ if and
only if $g\leq 1$. In this case, assume that there is an ample
divisor $H$ on $C$ such that $\phi^*H$ is a multiple of $H$. Then
$\phi$ induces a finite endomorphism on the cone $X$ over $C$ with
polarization $mH$, where $m$ is a sufficiently large integer. On the
other hand, one can see easily by adjunction that a normal cone
over a smooth curve of genus $g$  has log canonical singularity if
and only if $g\leq 1$. This phenomenon is true in general. In this
paper, we prove the following theorem:

\begin{thm}\label{main_intro}
Let $(X,0)$ be a normal projective variety with isolated singularity
$0\in X$. Suppose that there exists a finite endomorphism
$\phi:(X,0)\to (X,0)$ such that $\deg\phi\geq 2$ and $\phi$ is
\'etale in codimension 1. Then $X$ is $\QQ$-Gorenstein and log
canonical.
\end{thm}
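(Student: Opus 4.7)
The plan is to prove the two conclusions in sequence: first that $X$ is $\QQ$-Gorenstein, using the non-$\QQ$-Gorenstein framework of Boucksom--de Fernex--Favre (BdFF) together with Nakayama's $\sigma$-decomposition, and then that $X$ is log canonical via a pullback-of-discrepancies argument that iterates $\phi$.

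Because $\phi$ is étale in codimension one, the Weil-divisorial pullback $\phi^{[*]}$ is defined and $\phi^{[*]} K_X = K_X$. I fix a log resolution $\pi: Y \to X$ of the isolated singularity $0 \in X$, with exceptional divisors $\{E_i\}$; after passing to a sufficiently high common model I obtain a finite lift $\tilde{\phi}: Y' \to Y$ of degree $\deg\phi$, again étale in codimension one. The heart of the argument is to control the BdFF envelope $\Env_Y(-K_X)$: this is a movable $\RR$-Weil divisor on $Y$ whose failure to equal the Weil pullback $\pi^{[*]}(-K_X)$ is measured by an effective defect divisor $D$ supported on $\{E_i\}$, and $X$ is numerically $\QQ$-Gorenstein iff $D=0$. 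The idea is that pullback by $\tilde{\phi}$ respects envelopes up to a Nakayama-negative correction, so $D$ transforms into $\tilde{\phi}^{[*]} D$ (up to exceptional error), and comparing numerical invariants over $0$ yields a strict multiplicative scaling by $\deg\phi \geq 2$. Iterating $\phi$, the finitely many $E_i$ on a fixed resolution force $D$ to vanish, giving numerical $\QQ$-Gorensteinness, which for an isolated singularity should upgrade to genuine $\QQ$-Gorensteinness.

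The main obstacle is the rigorous interaction of $\sigma$-decomposition with the pullback $\tilde{\phi}^{[*]}$. The pullback of a movable $\RR$-Weil divisor under a generically finite morphism need not remain movable; the Nakayama-negative part of $\tilde{\phi}^{[*]} \Env_Y(-K_X)$ must be identified as exceptional over $X$, which should follow from étaleness in codimension one together with a careful analysis of the diminished base locus and passage to a movable modification (the keywords of the paper). The comparison of envelopes across the models $Y$ and $Y'$ and the reconciliation on a common model is also where substantial bookkeeping is likely to accumulate.

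Once $X$ is $\QQ$-Gorenstein, the log canonical conclusion is relatively short. The equality $\phi^* K_X = K_X$ as $\QQ$-Cartier divisors means that for any divisor $F$ over $X$ with discrepancy $a(F,X)$, every preimage $F'$ of $F$ under $\phi$ has the same discrepancy $a(F',X)$. If some $a(F,X) < -1$, iteration of $\phi$ would produce, at every step, new valuations of the same discrepancy centered over $0 \in X$; embedding all of these into a single log resolution would give infinitely many distinct exceptional divisors with bounded discrepancy, contradicting the finiteness of log resolution data for an isolated singularity. Hence $X$ is log canonical.
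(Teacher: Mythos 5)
Your outline correctly identifies the two halves of the problem, but both halves have genuine gaps. For the $\QQ$-Gorenstein part, the "numerical invariant over $0$" that is supposed to scale by $\deg\phi$ and whose vanishing detects the defect divisor is never constructed, and this is precisely the hard point: the only previously available invariant with the multiplicativity property, $\volbdff(X)=-\Env_\XX(A_{\XX/X})^n$ of Boucksom--de Fernex--Favre, is known to be insufficient --- by \cite{Z14} there is an isolated singularity with $\volbdff(X)=0$ which is not $\QQ$-Gorenstein and admits no log canonical boundary, so forcing that kind of invariant to vanish by the $d\geq 2$ scaling trick does not yield your defect $D=0$. The paper's actual content is the construction of a strictly finer invariant $\volmov(X)=-(\PP_-)^n$, where $\PP_-=\PP_\sigma(K_Y+E_f)+\Env_X(-K_X)$ is built from Nakayama's $\sigma$-closure on a movable modification $f:Y\to X$; one then needs Lemma \ref{env_vs_sigma} ($\Env_\XX(K_\XX+E_\XX)=\PP_\sigma(K_Y+E_f)$) together with $K_\YY+E_\YY=\phi^*(K_\XX+E_\XX)$ and $\Env_\YY(\phi^*\WW)=\phi^*\Env_\XX(\WW)$ to get $\PP_{-,Y}=\phi^*\PP_{-,X}$ and hence multiplicativity. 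You flag exactly this interaction of $\sigma$-decomposition with finite pullback as "the main obstacle," but leave it unresolved --- and it is the theorem. Moreover, even granting the vanishing, your final step "numerically $\QQ$-Gorenstein upgrades to $\QQ$-Gorenstein for an isolated singularity" is false: by Mumford's numerical pullback every normal surface singularity is numerically $\QQ$-Gorenstein, yet not all are $\QQ$-Gorenstein. What the vanishing of $\volmov$ actually gives is $\PP_-=0$, hence \emph{numerically log canonical}, and the upgrade is Proposition \ref{num_lc}: one needs $K_Y+E_f\equiv_f 0$ on the dlt movable modification, the abundance theorem of Fujino--Gongyo to get $f$-semi-ampleness, and the log canonical modification of Odaka--Xu to conclude $K_X$ is $\QQ$-Cartier; numerical $\QQ$-Gorensteinness alone (or rationality of singularities, which is unavailable here) does not suffice.

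The log canonical half also does not work as written. Producing "infinitely many distinct exceptional divisors with bounded discrepancy" is not a contradiction: there is no finiteness of divisorial valuations over $X$ captured by a single log resolution, and any non-log-canonical (indeed any strictly log canonical) isolated singularity already carries infinitely many divisors over $0$ with log discrepancy $\leq 0$. The correct statement in the $\QQ$-Gorenstein case --- that a noninvertible endomorphism \'etale in codimension $1$ forces log canonicity --- is a theorem of \cite{BdFF} and \cite{BH}, proved either by the multiplicativity and finiteness of $\volbdff$ together with the equivalence "$\volbdff=0$ iff lc in the $\QQ$-Gorenstein case," or by tracking non-lc centers under finite pullback; in the present paper it simply falls out of Theorem \ref{main_equiv}, since $\volmov(X)=d\,\volmov(X)$ with $d\geq 2$ and $0\leq\volmov(X)<\infty$ give $\volmov(X)=0$, which yields $\QQ$-Gorenstein and log canonical simultaneously. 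If you want to keep your two-step structure, you should replace the iteration-of-discrepancies argument by a citation of one of these results, or by the vanishing-of-volume criterion (Theorem \ref{vol_m=0} or \cite[Theorem A and B]{BdFF}).
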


The assumption that $X$ has isolated singularity is necessary. Otherwise, let $X=E\times V$, where $E$ is an elliptic curve and $V$ is an arbitrary variety with a bad singularity. Then $X$ has an induced noninvertible \'etale endomorphism from $E$.


We briefly review the history of this problem. For the definitions of related terminologies, we refer to Section 2 and \cite{BdFF}.
The surface case is studied in \cite{Wah}. Let $X$ be a normal surface and $f:Y\to X$ be the minimal resolution. The relative Zariski decomposition yields $K_{Y/X}=P+N$. Wahl's invariant is defined as the nonnegative intersection number $-P^2$, which is the key ingredient in the study of surfaces with noninvertible finite endomorphisms. A classification of such surfaces
is given in \cite{Fav,FN}. Wahl's invariant is generalized to higher
dimensions by Boucksom, de Fernex and Favre \cite{BdFF}. Due to the
absence of minimal resolutions, they consider log discrepancy divisors
on all birational models over $X$ as Shokurov's $b$-divisor $A_{\XX/X}$. The
Zariski decomposition is replaced by the nef envelope
$\Env_\XX(A_{\XX/X})$. It can be shown that
$-(\Env_\XX(A_{\XX/X}))^n$ is a well-defined finite nonnegative
number, which is called $\volbdff(X)$. This volume behaves well under
finite morphisms. In particular, they prove the following theorem:

\begin{thm}\cite[Theorem A and B]{BdFF}\cite[Proposition 2.12]{Ful13}
For normal isolated singularities $(X,0)$ with noninvertible finite endomorphism, $\volbdff(X)=0$. Moreover, when $X$ is $\QQ$-Gorenstein, $\volbdff(X)=0$ if and only if
$X$ has log canonical singularity.
\end{thm}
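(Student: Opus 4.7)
The plan is to prove the two assertions separately using the defining formula $\volbdff(X) = -(\Env_\XX(A_{\XX/X}))^n$. The vanishing under a noninvertible endomorphism comes from a pullback compatibility of b-divisors and envelopes under finite maps, while the equivalence with log canonicity in the $\QQ$-Gorenstein setting comes from reducing $\Env_\XX(A_{\XX/X})$ to a Zariski-type decomposition on a single log resolution.

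For the first assertion, I would first check that if $\phi : Y \to X$ is a finite surjective morphism between normal isolated singularities which is \'etale in codimension one, then pullback of b-divisors sends the log discrepancy b-divisor of $X$ to that of $Y$: $\phi^* A_{\XX/X} = A_{\YY/Y}$. The key input is Riemann--Hurwitz: since $K_Y = \phi^* K_X$ away from a codimension two locus, log discrepancies on compatible birational models $Y' \to Y$ and $X' \to X$ pull back without correction. Next I would verify that the nef envelope commutes with finite pullback, so $\Env_\YY(\phi^* A_{\XX/X}) = \phi^* \Env_\XX(A_{\XX/X})$; the point is that ``nef'' can be tested on curves, and these pull back and push forward compatibly with degree $\deg \phi$. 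Combining these with the projection formula for top self-intersections of b-divisors yields the identity $\volbdff(Y) = \deg(\phi) \cdot \volbdff(X)$. Applied to the given endomorphism of $X$ with $\deg \phi \geq 2$, together with the finiteness and nonnegativity of $\volbdff$ already recorded in the setup, this forces $\volbdff(X) = 0$.

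For the second assertion, assume $X$ is $\QQ$-Gorenstein and fix a log resolution $f : Y \to X$ with exceptional divisors $E_i$ and discrepancies $K_Y = f^* K_X + \sum a_i E_i$. Then $rA_{\XX/X}$ descends to the Cartier divisor $D := \sum r(a_i + 1) E_i$ on $Y$, and the envelope reduces to the positive part $P$ of a divisorial Zariski decomposition $D = P + N$ taken relative to $f$, giving $\volbdff(X) = -r^{-n} P^n$. Because the exceptional locus lies over the isolated point $0$, the intersection form on the $E_i$ is negative definite by Zariski--Mumford; consequently $-P^n \geq 0$, with equality iff $P$ contributes trivially to top intersection on the exceptional fiber. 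By uniqueness of the Zariski decomposition, this happens iff $D$ is itself effective, iff $a_i + 1 \geq 0$ for every exceptional $E_i$, iff $X$ is log canonical.

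The main obstacle is the careful development of the b-divisor formalism: proving that $\Env$ commutes with finite pullback, that a projection formula holds for the top self-intersection $-(\Env)^n$, and the precise identification of $\Env_\XX(A_{\XX/X})$ with the positive part of a Zariski decomposition in the $\QQ$-Gorenstein case. These are exactly the contents of the main theorems of \cite{BdFF} (with the $\QQ$-Gorenstein equivalence refined in \cite{Ful13}), which I would invoke rather than redevelop. The only genuinely new step in the above plan is the functoriality identity $\volbdff(Y) = \deg(\phi) \cdot \volbdff(X)$, which I expect to follow formally once the b-divisor machinery is in place.
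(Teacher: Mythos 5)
You are proving a quoted theorem here (the paper itself gives no proof, only the citations to \cite{BdFF} and \cite{Ful13}), so I compare your plan with the arguments it is quoting. The decisive gap is in the first assertion: the statement does \emph{not} assume the endomorphism is \'etale in codimension one, but your entire argument does. For a ramified $\phi$ one has $K_X\neq\phi^*K_X$ (Riemann--Hurwitz produces a nonzero effective ramification divisor), so your key identity $\phi^*A_{\XX/X}=A_{\XX/X}$ and the resulting exact multiplicativity $\volbdff(X)=\deg(\phi)\volbdff(X)$ simply are not available; compare Lemma~\ref{finite_morp}, where the equality of $K+E$ b-divisors under pullback holds on exceptional valuations in general (part (3)) but as b-divisors only under the \'etale-in-codimension-one hypothesis (part (4)). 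What the cited proof actually uses is the surviving \emph{inequality} (ramification is effective, and the envelopes compare accordingly), giving $\volbdff(Y)\geq d\,\volbdff(X)$ for an arbitrary finite $\phi:(Y,0)\to(X,0)$; applied to an endomorphism of degree $d\geq 2$ this still forces $\volbdff(X)=0$ since $\volbdff$ is finite and nonnegative. As written, your argument proves only a strictly weaker special case --- and the general case is precisely why this paper treats non-\'etale endomorphisms separately and cites \cite[Theorem B]{BdFF} for them.

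For the $\QQ$-Gorenstein equivalence, the proposed mechanism --- descend to one log resolution, take a relative Zariski decomposition $D=P+N$, and conclude via ``negative definiteness of the intersection form on the $E_i$ by Zariski--Mumford'' --- is a surface argument. In dimension $\geq 3$ there is no Zariski decomposition on a fixed model and no negative definiteness of the exceptional lattice; this failure is exactly why \cite{BdFF} works with nef envelopes of b-divisors (and why the present paper introduces $\sigma$-decompositions and movable modifications), and the one known model on which $\volbdff$ of a $\QQ$-Gorenstein $X$ is an honest intersection number is the log canonical modification \cite[Theorem 3.2]{Z14}, not a log resolution. The genuine inputs are the statements recorded in Proposition~\ref{self_inter}: $\WW^n\leq 0$ for $X$-nef $\WW$, and $\WW^n=0$ forces $\WW=0$; granting these, the clean argument is: if $X$ is lc then $A_{\XX/X}\geq 0$, so $\Env_\XX(A_{\XX/X})$ is effective, exceptional and $X$-nef, hence $0$ by Lemma~\ref{nef_vs_mov} and Lemma~\ref{neg_lemma_mov}; conversely $\volbdff(X)=0$ gives $\Env_\XX(A_{\XX/X})=0$ by Proposition~\ref{self_inter}(2), and since the envelope is the largest $X$-nef b-divisor below $A_{\XX/X}$ this yields $A_{\XX/X}\geq 0$, i.e.\ $X$ is lc --- and this is where $\QQ$-Gorensteinness matters, since otherwise $A_{\XX/X}$ involves the de Fernex--Hacon pullback and its effectivity no longer detects log canonicity for any boundary (see the counterexample of \cite{Z14} recalled in the introduction). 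Your step ``$-P^n=0$ iff $D$ is effective, by uniqueness of the Zariski decomposition'' is not a justification; on a fixed $\QQ$-factorial model the correct statement is $P_\sigma(A)=0$ iff $A=N_\sigma(A)\geq 0$, using effectivity of $N_\sigma$ together with Lemma~\ref{neg_lemma_mov} and Proposition~\ref{sigma_decom}. Finally, be careful that ``invoking the main theorems of \cite{BdFF}'' cannot be the fallback here: the statement to be proved \emph{is} those theorems, so only the technical b-divisor machinery (envelopes, their pullback, Proposition~\ref{self_inter}) may be quoted, and the endomorphism and lc-characterization steps must be carried out, in the generality stated.
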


The same theorem is obtained in \cite{BH} by analyzing the behavior of non-log-canonical centers under finite pullback. In \cite{Ful13}, Fulger defines a courser volume $\vol_{\mathrm F}(X)$ as the asymptotic order of growth of plurigenera, which coincides with $\volbdff(X)$ when $X$ is $\QQ$-Gorenstein.

Unfortunately, in \cite{Z14}, the author produces a
non-$\QQ$-Gorenstein isolated singularity $(X,0)$ such that
$\volbdff(X)=0$ while there is no boundary $\Delta$ such that
$(X,\Delta)$ is log canonical. We should remark that, in this
example, $X$ admits a small log canonical modification
\cite{OX}\cite[Proposition 2.4]{BH}.

The $\QQ$-Gorenstein case is further studied in \cite[Section 3]{Z14}.
Specifically, the author shows that, like the surface case,
$\volbdff(X)$ can be calculated by an intersection number on a
certain birational model $f:Y\to X$, namely, the log canonical modification
\cite{OX}. A key property of such a model is that $K_Y+E_f$ is $f$-ample, where
$E_f$ is the reduced exceptional divisor. In the
non-$\QQ$-Gorenstein case, the existence of the log canonical modification is
conjectured to be true assuming the full minimal model program
including the abundance conjecture, but has not yet been proved.

In this paper, in order to show that a normal isolated singularity
$(X,0)$ in Theorem \ref{main_intro} is indeed $\QQ$-Gorenstein, we
consider a birational model over $X$ called a movable modification
(Theorem \ref{mov_modi}) where $K_Y+E_f$ is $f$-movable. The
existence of movable modifications is known to experts. However, we
include a proof in Section 3. We introduce a new volume $\volmov(X)$
(Definition \ref{defn_volmov}) using Nakayama's
$\sigma$-decomposition. We show the following theorem:

\begin{thm}[Proposition \ref{num_lc}, Lemma \ref{vol_pos}, Theorem \ref{main_equiv}, \ref{main_finite} and \ref{main_volmov=0}]\label{main}
For normal isolated singularities $(X,0)$ and $(Y,0)$,
\begin{enumerate}
\item $\volmov(X)$ is a finite nonnegative number, and $\volmov(X)\geq
\volbdff(X)$ with equality when $X$ is $\QQ$-Gorenstein.
\item If $\phi:(Y,0)\to(X,0)$ is a finite morphism of degree $d$ that is \'etale in codimension 1,
then $$\volmov(Y)= d\volmov(X).$$
\item If $\phi:(X,0)\to(X,0)$ is a finite endomorphism of degree $\geq
2$ that is \'etale in codimension 1, then $$\volmov(X)=0.$$
\item If $\volmov(X)=0$, then $X$ is numerically $\QQ$-Gorenstein.
\item Every numerically log canonical variety is $\QQ$-Gorenstein.
\end{enumerate}
\end{thm}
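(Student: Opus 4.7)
The theorem bundles together the five main technical results of the paper, so I sketch a plan for each in turn, expecting part~(5) to be the principal obstacle. Throughout I write $A_{Y/X}$ for the trace on a birational model $f\colon Y\to X$ of the log discrepancy b-divisor $A_{\XX/X}$.

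For part~(1), the plan is to define $\volmov(X)$ as a limit, over birational models $f\colon Y\to X$, of $(-P_\sigma(A_{Y/X}))^n$, where $P_\sigma$ denotes the positive (movable) part of Nakayama's $\sigma$-decomposition taken relative to $f$. Since $f$-nef divisors are $f$-movable, $P_\sigma(A_{Y/X})$ dominates the trace of the nef envelope $\Env_\XX(A_{\XX/X})$, and the monotonicity of top self-intersection on pseudo-effective movable classes then yields $\volmov(X)\geq \volbdff(X)$. Finiteness follows because on a movable modification (Theorem~\ref{mov_modi}) $P_\sigma(A_{Y/X})$ is represented by an explicit $f$-movable divisor with bounded top self-intersection. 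In the $\QQ$-Gorenstein case the nef and movable envelopes agree on a sufficiently high log resolution, giving equality. For part~(2), the key input is that $\phi$ \'etale in codimension one forces $\phi^*A_{\XX/X}=A_{\YY/Y}$; combined with the compatibility of $\sigma$-decompositions under finite pull-back (which preserves both movability and effectivity), top intersection numbers transform by $\deg\phi$. Part~(3) is then immediate: the identity $\volmov(X)=d\,\volmov(X)$ with $d\geq 2$ and $\volmov(X)<\infty$ forces vanishing.

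For part~(4), the plan is to work on a movable modification $f\colon Y\to X$ and exploit the fact that an $f$-movable b-divisor of vanishing relative top self-intersection is numerically $f$-trivial (a b-divisorial avatar of a theorem of Nakayama). Applied to $P_\sigma(A_{Y/X})$, this means that $A_{Y/X}$ is numerically equivalent over $X$ to its negative part $N_\sigma(A_{Y/X})$, an effective divisor supported on $E_f$. This is precisely what should be adopted as the definition of \emph{numerically $\QQ$-Gorenstein}, and one then checks that it agrees with the paper's definition.

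Part~(5) is where I expect the genuine difficulty. The plan is to combine the movable modification of Section~3 with the numerical log canonicity hypothesis: numerical LC forces $A_{Y/X}\geq 0$ numerically, which together with $K_Y+E_f$ being $f$-movable should allow one either to run a suitable relative MMP or to invoke the small log canonical modification of \cite{OX,BH} and thereby descend $K_Y+E_f$ to an honest $\QQ$-Cartier class on $X$. The hard part---and the main obstacle---is upgrading purely numerical $\QQ$-Gorenstein data to a genuine $\QQ$-Cartier structure; this is where the isolated singularity hypothesis must enter decisively, and I expect this step to dictate the overall difficulty of the proof.
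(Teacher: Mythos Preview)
Your plan has two genuine gaps.

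First, the paper's $\volmov$ is not built from $P_\sigma(A_{Y/X})$; the diminished positive part is $\PP_{-}=\PP_\sigma(K_Y+E_f)+\Env_X(-K_X)$, with the $\sigma$-closure applied only to $K_Y+E_f$ and the $-K_X$ contribution handled separately by the envelope. This separation is exactly what drives part~(2): one proves (Lemma~\ref{env_vs_sigma}) that $\PP_{-}=\Env_\XX(K_\XX+E_\XX)+\Env_X(-K_X)$, a sum of nef envelopes, and then invokes the fact that envelopes commute with finite pullback \cite[Lemma~2.19]{BdFF} together with $K_\YY+E_\YY=\phi^*(K_\XX+E_\XX)$ when $\phi$ is \'etale in codimension~1 (Lemma~\ref{finite_morp}). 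Your appeal to ``compatibility of $\sigma$-decompositions under finite pullback'' is not established and is not how the argument runs; without the envelope reformulation, the multiplicativity $\volmov(Y)=d\,\volmov(X)$ remains unproven. This also affects part~(4): from $(\PP_{-})^n=0$ one gets $\PP_{-}=0$ as a $b$-divisor, hence $A_{Y/X}=0$ on a movable modification, and then the separate vanishing of the two summands on all higher models is what forces both $f^*K_X$ and $-f^*K_X$ to be $f$-nef, giving numerical $\QQ$-Gorensteinness.

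Second, for part~(5) the isolated singularity hypothesis is \emph{not} used, and the decisive input you are missing is relative abundance for dlt pairs. On a movable modification $f\colon Y\to X$ one has $A_{Y/X}\leq 0$ by the negativity lemma for movable divisors (Lemma~\ref{neg_lemma_mov}), while numerical log canonicity gives $A_{Y/X}\geq 0$; hence $K_Y+E_f=f^*K_X$, which is $f$-numerically trivial by Proposition~\ref{num_Car}(3). Thus $(Y,E_f)$ is dlt with $K_Y+E_f\equiv_f 0$, and the abundance theorem of Fujino--Gongyo \cite[Theorem~4.9]{FG} makes $K_Y+E_f$ $f$-semi-ample. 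The resulting log canonical model over $X$ is simultaneously relatively ample and numerically trivial, hence an isomorphism, so $K_X$ is $\QQ$-Cartier. No further MMP and no small modification from \cite{OX,BH} is needed here.
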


As a byproduct while studying numerically $\QQ$-Gorenstein
varieties, we obtain the following theorem which slightly
generalizes \cite[Theorem 1.1]{OX}.

\begin{thm}[Theorem \ref{lc_modi}]
Let $X$ be a numerically $\QQ$-Gorenstein projective variety. Then
there exists a log canonical model $Y$ over $X$.
\end{thm}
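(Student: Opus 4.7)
\smallskip
\noindent \textbf{Proof proposal.} The plan is to adapt the strategy of Odaka--Xu~\cite{OX} to the numerically $\QQ$-Gorenstein setting. I would begin by taking a log resolution $f: Y \to X$ with reduced exceptional divisor $E = E_f$, and constructing the candidate log canonical model as
$$Z = \Proj_X \bigoplus_{m\geq 0} f_* \OO_Y(m(K_Y + E)).$$
It then suffices to prove that this $\OO_X$-algebra is finitely generated and that the resulting birational morphism $g: Z \to X$ satisfies $(Z, E_g)$ log canonical with $K_Z + E_g$ being $g$-ample.

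The main technical step is to run a relative $(K_Y + E)$-MMP over $X$. The numerically $\QQ$-Gorenstein hypothesis plays the crucial role here: although $K_X$ need not be $\QQ$-Cartier, the numerical class of $K_Y + E$ relative to $X$ is represented by a genuine $\QQ$-divisor $D$ supported on the exceptional locus of $f$. This $D$ takes over the role that $K_Y + E - f^* K_X$ plays in the $\QQ$-Gorenstein case. Writing $D = D_+ - D_-$, the negative part $D_-$ detects the non-log-canonical places, and the MMP should contract or flip them.

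I would then run the $(K_Y + E)$-MMP over $X$ with scaling by a general $f$-ample divisor. Because the relative class is represented by the $\QQ$-divisor $D$, all birational modifications occur inside the exceptional fibers, where the standard cone, contraction, flip existence, and termination results apply. After finitely many steps one arrives at a model $g: Z \to X$ with $(Z, E_g)$ log canonical and $K_Z + E_g$ being $g$-nef. Relative base-point freeness, combined with $g$-bigness of $K_Z + E_g$ coming from the nontrivial exceptional contribution in $D_+$, should then upgrade nefness to $g$-ampleness, yielding the log canonical model.

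The main obstacle I expect is justifying that the classical relative MMP machinery continues to work when the base is only numerically $\QQ$-Gorenstein rather than genuinely $\QQ$-Gorenstein. The key insight is that once the relative numerical class of $K_Y + E$ over $X$ is identified with an honest $\QQ$-divisor $D$ supported on exceptional components, the whole MMP can be run intrinsically on $Y$ over $X$, reducing to a setting where standard results apply with only cosmetic modifications. A subsidiary issue is ensuring $g$-bigness of $K_Z + E_g$ to pass from nef to ample; this should follow from a careful analysis of $D_+$ and the structure of the exceptional locus.
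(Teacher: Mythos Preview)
Your proposal has a genuine gap at the termination step. You assert that ``standard cone, contraction, flip existence, and termination results apply'' once the relative class of $K_Y+E$ is represented by an exceptional $\QQ$-divisor $D$, but termination of flips for dlt pairs is \emph{not} a standard result---it is open in general. Having $K_Y+E\equiv_X D$ with $D$ exceptional does not by itself give termination; this is precisely the difficulty that the Odaka--Xu argument, and the paper's adaptation of it, must work around. The paper does not simply run the MMP and wait for it to stop. Instead it (i) first passes to a \emph{movable modification} $\phi:Z\to X$ (Theorem~\ref{mov_modi}), which already absorbs the divisorial part of the MMP, and then (ii) perturbs to a boundary $B=E_\phi-\epsilon A_{Z/X}$ and invokes Hacon--Xu \cite[Theorem~1.1]{HX} to produce a good minimal model. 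That theorem requires two inputs you have not supplied: a good minimal model over an open subset $X_{lc}\subseteq X$, obtained here from Proposition~\ref{num_lc} (numerically log canonical $\Rightarrow$ $\QQ$-Gorenstein), and the condition that no log canonical centre of $(Z,\lfloor B\rfloor)$ maps into $X\setminus X_{lc}$, verified via the Koll\'ar--Shokurov Connectedness Lemma. None of these ingredients appears in your outline.

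There is a second, related gap in the passage from nef to ample. You propose to use $g$-bigness of $K_Z+E_g$, arguing it comes from ``the nontrivial exceptional contribution in $D_+$''. But over the locus where $X$ is already numerically log canonical, $K_Z+E_g$ is numerically $g$-trivial (it equals $g^*K_X$ numerically there by Proposition~\ref{num_Car}(3)), so $K_Z+E_g$ is \emph{not} $g$-big in general and the basepoint-free theorem does not apply directly. What is actually needed is semi-ampleness, i.e.\ a \emph{good} minimal model, which is exactly what the Hacon--Xu machinery delivers. In short, the numerically $\QQ$-Gorenstein hypothesis enters not merely to identify $K_Y+E$ with an exceptional divisor, but crucially through Proposition~\ref{num_lc} to furnish the local good minimal model that seeds the Hacon--Xu argument.
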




The case that $\phi$ is not \'etale in codimension one is also
interesting. Although $X$ is not $\QQ$-Gorenstein in general (see
example below), we still expect that there exists a boundary
$\Delta$ such that $(X,\Delta)$ is log canonical. It is known that
$X$ has klt singularities assuming that $X$ is $\QQ$-Gorenstein
\cite[Theorem B]{BdFF}.

\begin{ex}
Let $V=\mathbb{P}^1\times E$ where $E$ is an elliptic curve. Let
$\phi_1:\mathbb{P}^1\to \mathbb{P}^1$ be raising to the fourth power
and $\phi_2=[2]:E\to E$ be multiplying by 2. Let $H_1$ be a point in
$\mathbb{P}^1$, $H_2$ be a symmetric ample divisor on $E$ (i.e.,
$[-1]^*H_2=H_2$) and $H=p_1^*H_1+p_2^*H_2$, where $p_i$ are the
projections. Then $\phi=\phi_1\times \phi_2$ is an endomorphism on
$V$ of degree 16 such that $\phi^*H\sim 4H$. We may take $X$ be the
cone over $V$ with polarization $H$. Thus, $\phi$ induces an
endomorphism on $X$ of degree 16. But $X$ is not $\QQ$-Gorenstein or
log terminal. However, one may take $\Delta$ be the cone over
$p_1^*(D_1+D_2)$ for two different points $D_1$ and $D_2$ on
$\mathbb{P}^1$ and see that $K_X+\Delta$ is $\QQ$-Cartier and log
canonical.
\end{ex}

\begin{que} \label{que} Let $(X,0)$ be a normal isolated singularity. If $\phi:(X,0)\to (X,0)$ is a noninvertible
finite endomorphism that is not \'etale in codimension one, does
there exist a boundary $\Delta$ such that $(X,\Delta)$ is log
canonical?
\end{que}

The global counterpart of this problem is well studied. Let $(V,H)$
be a normal polarized projective variety. A finite endomorphism
$\phi:V\to V$ is called polarized if $\phi^*H$ is a multiple of $H$.
The cone over a smooth variety $V$ with the polarization $H$ gives
an isolated singularity as in the local case. The classification of
polarized K\"ahler surfaces (also known as dynamic surfaces) is
obtained in \cite{FN} and \cite[Proposition 2.3.1]{Z06}. The three
dimensional case is studied in \cite{Fuj02} and \cite{FN07}, where
the classification of smooth projective 3-folds with $\kappa(X)\geq
0$ that admit nontrivial endomorphism (which necessarily is
\'etale) is given. Higher dimensions are studied in \cite{NZ09,NZ}\cite[Theorem 1.21]{GKP}.
It is known that a $\QQ$-Gorenstein polarized projective variety
with noninvertible endomorphism is log canonical \cite{BH}. We propose the global version of Question \ref{que}:

\begin{que}
If $\phi$ is a noninvertible polarized finite endomorphism on $(V,H)$, is $V$ log Calabi-Yau? It is known that $-K_V$ is pseudo-effective when $V$ is smooth \cite[Theorem C]{BdFF}.
\end{que}

There are also conditions that are weaker than being polarized
that rise from dynamic systems, such as amplified and unity-free. We
refer to \cite{KR} for the definitions and comparisons.

\subsection*{Acknowledgement} The main work was done during Algebraic Geometry Summer Research Institute organized by AMS and University of Utah. The author would like to thank to Tommaso de Fernex, Christopher Hacon, Lance Miller, Mircea Musta\c{t}\u{a} and Mihnea Popa for inspiring discussions, and the anonymous referee for pointing out several mistakes. The author is partially supported by NSF FRG Grant DMS-1265285.
\section{Preliminaries}

Throughout this paper, we work over an algebraically closed field
$k$ of characteristic 0.

\subsection{Movable divisors}

Let $f:Y\to X$ be a projective morphism between normal varieties and
$D$ be an $f$-big Cartier divisors on $Y$. The base locus of $D$ over $X$,
$\Bs(D)$, is the co-support of the image of the following canonical map:
$$f^*f_*\OO_Y(D)\otimes \OO_Y(-D)\to \OO_Y.$$ The stable base locus
is defined to be $$\SBs(D)=\bigcap_{m\geq 1}\Bs(mD)_{\text{red}}.$$
One can easily extend this definition to $\QQ$-Cartier divisors. We
define the diminished base locus\footnote{This is called restricted
base locus in \cite{ELMNP} and non-nef locus in \cite{BDPP}} of $D$
as
$$\SBs_-(D)=\bigcup_{\epsilon>0} \SBs(D+\epsilon H),$$ where $H$ is
an $f$-ample divisor on $Y$. It can be shown that $\SBs_-(D)$ is
independent of the choice of $H$ (see \cite[Section 1]{ELMNP}.

We say an $f$-pseudo-effective $\QQ$-Cartier divisor $D$ on $Y$ is $f$-mobile if $$\codim(\Bs(D))\geq
2.$$ The \textbf{$f$-movable cone} $\Mov(Y/X)$ is the closure of the
cone generated by $f$-mobile Cartier divisors in the finite dimensional space $N^1(Y/X)$. We call
an $\RR$-divisor $D$ $f$-movable if $D\in\Mov(Y/X)$.

\begin{lem}\label{mov_vs_bs}\cite[Theorem V.1.3]{Nak04}
Let $D$ be a $\QQ$-Cartier divisor on $Y$. Then $D\in\Mov(Y/X)$ if
and only if $\SBs_-(D)$ contains no divisor. \qed
\end{lem}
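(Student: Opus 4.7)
The plan is to handle the two directions separately. The easier direction is direct, using stable base loci and fixed parts of linear systems. The harder direction goes through Nakayama's asymptotic multiplicity along a prime divisor: for a pseudo-effective $\QQ$-Cartier divisor $B$ on $Y$ and a prime divisor $\Gamma$, let
$$\sigma_\Gamma(B) := \lim_{\epsilon\to 0^+}\inf\{\mult_\Gamma(B'): B'\geq 0,\ B'\sim_\QQ B+\epsilon H\},$$
and use the standard fact from \cite{Nak04} that $\Gamma\subseteq \SBs_-(B)$ iff $\sigma_\Gamma(B)>0$.

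For the direction ``$\SBs_-(D)$ has no divisor implies $D\in\Mov(Y/X)$'', fix a rational $\epsilon>0$. Adding the $f$-ample divisor $\epsilon H$ can only shrink stable base loci, so $\SBs(D+\epsilon H)\subseteq \SBs_-(D)$ has codimension at least 2. For $m$ sufficiently divisible, the divisorial part of $\Bs(m(D+\epsilon H))$, which agrees with the support of the fixed part of $|m(D+\epsilon H)|_{/X}$, stabilizes to the (empty) divisorial part of $\SBs(D+\epsilon H)$. Hence $m(D+\epsilon H)$ is $f$-mobile, so $D+\epsilon H\in\mov(Y/X)$; letting $\epsilon\to 0^+$ places $D$ in $\Mov(Y/X)$.

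For the converse, suppose $D\in\Mov(Y/X)$. Every $f$-ample integer Cartier divisor admits a base-point-free multiple, so the $f$-ample cone is contained in $\mov(Y/X)$ and hence in the interior of $\Mov(Y/X)$. In a finite-dimensional space, the interior of a closed convex cone with nonempty interior coincides with the interior of any dense subcone, so for any $\epsilon>0$, the interior point $D+\epsilon H$ of $\Mov(Y/X)$ actually lies in $\mov(Y/X)$. Writing $D+\epsilon H\equiv \sum_i a_i M_i$ with $a_i\geq 0$ and each $M_i$ $f$-mobile integer Cartier, and using that $\sigma_\Gamma$ is subadditive, homogeneous, and vanishes on mobile divisors (since $\codim\Bs(M_i)\geq 2$ forces $\mult_\Gamma$ of the fixed part of $|kM_i|_{/X}$ to be zero for all $k$), we obtain $\sigma_\Gamma(D+\epsilon H)=0$ for every prime $\Gamma$. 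Letting $\epsilon\to 0^+$ gives $\sigma_\Gamma(D)=0$, so $\Gamma\not\subseteq\SBs_-(D)$.

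The main obstacle is the converse direction, where the numerical identity $D+\epsilon H\equiv \sum a_i M_i$ must be translated into control of the asymptotic multiplicity $\sigma_\Gamma$. This is handled by noting that $\sigma_\Gamma$ is a numerical invariant and that subadditivity and homogeneity follow directly from the definition by combining effective representatives. The other ingredient, namely writing an interior point of $\Mov(Y/X)$ as an actual finite combination of mobile classes rather than a limit, uses the standard finite-dimensional convexity fact about interiors of closed cones.
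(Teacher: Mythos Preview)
The paper does not give a proof of this lemma at all: it simply cites \cite[Theorem V.1.3]{Nak04} and marks it with a \qed. Your proposal is therefore not to be compared against an argument in the paper, but against Nakayama's original proof.

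Your argument is correct and is essentially Nakayama's. The forward direction is the straightforward one, and your use of the stabilization of the divisorial part of $\Bs(m(D+\epsilon H))$ is exactly how one produces an $f$-mobile multiple, placing $D+\epsilon H$ in the unclosed cone and hence $D$ in its closure. For the converse, your passage through the asymptotic multiplicities $\sigma_\Gamma$ is the standard route: the key inputs are (i) $\sigma_\Gamma$ depends only on the numerical class (\cite[Lemma III.1.7]{Nak04}), (ii) subadditivity and homogeneity, (iii) vanishing on mobile classes, and (iv) the identification of the divisorial part of $\SBs_-(D)$ with the support of $N_\sigma(D)$ (\cite[Theorem V.1.3]{Nak04}). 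The one step that deserves a word of care is your claim that an interior point of $\Mov(Y/X)$ already lies in the unclosed cone $\mov(Y/X)$; this follows, as you indicate, from the general fact that a convex subset of a finite-dimensional vector space with nonempty interior has the same interior as its closure, together with the observation that the $f$-ample cone is open and contained in $\mov(Y/X)$. Once $D+\epsilon H\equiv_f \sum a_i M_i$ with the $M_i$ mobile, the rest is immediate.

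In short: the paper offers no proof to compare to, and your reconstruction is sound.
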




\begin{lem}\label{mov_factor}
Let $f:Y\to X$ and $g:Z\to Y$ be two projective morphisms and
$\phi=f\circ g$. If a Cartier divisor $D$ on $Z$ is $\phi$-mobile,
then $D$ is also $g$-mobile. In particular, $\Mov(Z/X)\subseteq
\Mov(Z/Y)$.
\end{lem}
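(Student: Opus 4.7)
The plan is to establish the set-theoretic inclusion $\Bs_g(D) \subseteq \Bs_\phi(D)$ of base loci over $Y$ and over $X$; this will immediately yield the codimension condition for $g$-mobility, and the movable cone inclusion will follow by taking closures.

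First I would verify that $\phi$-pseudo-effectivity implies $g$-pseudo-effectivity. This is because any $\phi$-effective representative of a class can be written as $D' + \phi^* L = D' + g^*(f^* L)$ for an effective Cartier divisor $D'$ and a line bundle $L$ on $X$, which exhibits the class as $g$-effective as well; passing to closures of effective cones transports the implication to the pseudo-effective setting. Since $D$ is $\phi$-mobile and in particular $\phi$-pseudo-effective, it follows that $D$ is $g$-pseudo-effective.

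The core step is the factorization of the canonical evaluation map. Using $\phi_* = f_* \circ g_*$ and $\phi^* = g^* \circ f^*$, together with naturality of the adjunction counits, the evaluation map for $\phi$ factors through the evaluation map for $g$:
$$\phi^* \phi_* \OO_Z(D) = g^* f^* f_* (g_* \OO_Z(D)) \xrightarrow{g^*(\epsilon_f)} g^*(g_* \OO_Z(D)) \xrightarrow{\epsilon_g} \OO_Z(D),$$
where $\epsilon_f$ and $\epsilon_g$ denote the respective counits. Hence the image of the composition is contained in the image of $\epsilon_g$ inside $\OO_Z(D)$. After tensoring with $\OO_Z(-D)$ and taking co-supports, this inclusion reverses to give $\Bs_g(D) \subseteq \Bs_\phi(D)$; running the same argument with $mD$ in place of $D$ extends the statement to stable base loci and hence to $\QQ$-Cartier divisors.

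Combining the two steps: if $D$ is $\phi$-mobile, then $D$ is $g$-pseudo-effective and $\codim \Bs_g(D) \geq \codim \Bs_\phi(D) \geq 2$, so $D$ is $g$-mobile. The inclusion $\Mov(Z/X) \subseteq \Mov(Z/Y)$ then follows since each movable cone is by definition the closure of the cone generated by its mobile Cartier divisors. The argument is essentially formal; the only delicate point is the bookkeeping behind the counit factorization, which I do not expect to pose a substantive obstacle.
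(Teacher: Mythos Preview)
Your proof is correct and follows essentially the same approach as the paper: both arguments factor the evaluation map $\phi^*\phi_*\OO_Z(D)\to\OO_Z(D)$ through $g^*g_*\OO_Z(D)\to\OO_Z(D)$ via the adjunction counits, deducing $\im\rho_\phi\subseteq\im\rho_g$ and hence $\Bs_g(D)\subseteq\Bs_\phi(D)$. Your preliminary check of $g$-pseudo-effectivity is an extra (harmless) detail not made explicit in the paper's version.
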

\begin{proof}
Since the morphism $$g^*f^*f_*g_*\OO_Z(D)\otimes
\OO_Z(-D)=\phi^*\phi_*\OO_Z(D)\otimes
\OO_Z(-D)\xrightarrow{\rho_\phi}\OO_Z$$ factors through
$$g^*g_*\OO_Z(D)\otimes \OO_Z(-D)\xrightarrow{\rho_g}\OO_Z,$$ it
follows that $\im \rho_\phi\subseteq \im \rho_g$. Thus, if the
co-support of $\im \rho_\phi$ has codimension $\geq 2$, then so is
the co-support of $\im \rho_g$. The lemma follows.
\end{proof}

The following lemma is a generalization of the well-known Negativity
Lemma \cite[Lemma 3.39]{KM}.
\begin{lem}\label{neg_lemma_mov}\cite[Lemma 4.2]{Fuj11} Let
$f:Y\to X$ be a proper birational morphism where $Y$ is a normal
$\QQ$-factorial variety. Let $E$ be an $\RR$-divisor on $Y$ such
that $E$ is $f$-exceptional and $E\in\Mov(Y/X)$. Then $E\leq 0$.
\qed
\end{lem}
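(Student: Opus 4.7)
The plan is to adapt the proof of the classical Negativity Lemma by replacing the $f$-nef hypothesis with $f$-movability, using Lemma \ref{mov_vs_bs} to translate movability into the absence of divisorial components in the diminished base locus. Decompose $E = E_+ - E_-$ with $E_\pm$ effective $\RR$-divisors sharing no common prime component; the goal is to show $E_+ = 0$.

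Suppose for contradiction that $E_+ \neq 0$ and let $F$ be a prime component of $E_+$, with coefficient $a > 0$. Since $F$ is $f$-exceptional, $\dim f(F) < \dim F$, so the fibers of $F \to f(F)$ are positive-dimensional. First I would produce a curve $C \subseteq F$ contracted by $f$ with $E \cdot C < 0$: take a general complete intersection of $f$-ample divisors on $Y$ restricted to a general fiber of $F \to f(F)$. The standard negative-definiteness argument on the exceptional fibers over a generic point of $f(F)$ yields $F \cdot C < 0$; generality of $C$ within its family arranges that $C$ is not contained in the support of $E_+ - aF$ nor of $E_-$, so that $(E_+ - aF) \cdot C \geq 0$ and $E_- \cdot C \geq 0$. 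Combining gives
\[
E \cdot C \;=\; a\,F \cdot C \;+\; (E_+ - aF)\cdot C \;-\; E_-\cdot C \;<\; 0.
\]

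Next I would use movability to derive the opposite inequality. Since $Y$ is $\QQ$-factorial, $E$ is $\RR$-Cartier, and Lemma \ref{mov_vs_bs}, applied to rational approximations of $E$ inside $\Mov(Y/X)$, gives $F \not\subseteq \SBs_-(E)$. Fixing an $f$-ample $\QQ$-Cartier divisor $H$, for each sufficiently small $\epsilon > 0$ there exist $m_\epsilon \in \mathbb{Z}_{>0}$ and an effective $\QQ$-Cartier divisor $D_\epsilon \sim_\QQ m_\epsilon(E + \epsilon H)$ whose support does not contain $F$. Choosing the test curve $C$ general enough to avoid the proper divisorial subset $\mathrm{Supp}(D_\epsilon) \cap F \subsetneq F$ gives $D_\epsilon \cdot C \geq 0$, hence $(E + \epsilon H) \cdot C \geq 0$; letting $\epsilon \to 0$ yields $E \cdot C \geq 0$, contradicting the strict inequality above.

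The main obstacle is arranging a single curve $C$ compatible with the whole family $\{D_\epsilon\}_{\epsilon>0}$: a priori the subsets $\mathrm{Supp}(D_\epsilon) \cap F$ could vary with $\epsilon$ and sweep out all candidate curves in $F$. The cleanest remedy is a dimension-reduction step: cut $Y$ by $\dim Y - 2$ general members of an $f$-ample linear system to obtain a normal surface $S \subseteq Y$ on which $F|_S$ is a union of $f|_S$-exceptional curves and $E|_S$ remains $f|_S$-movable. On a surface, Zariski decomposition identifies the movable cone with the (closure of the) $f|_S$-nef cone, so $E|_S$ is $f|_S$-nef; the classical Negativity Lemma \cite[Lemma 3.39]{KM} then forces $E|_S \leq 0$, contradicting $E_+|_S \neq 0$. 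Verifying that the passage to $S$ preserves both movability and enough of the $\QQ$-factorial structure (or else formulating the classical statement in a form directly applicable to general normal surfaces) is the delicate point that Fujino addresses in \cite{Fuj11}.
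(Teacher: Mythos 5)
The paper offers no argument of its own here (the lemma is quoted from \cite[Lemma 4.2]{Fuj11}), so your proposal has to stand on its own, and as written it has a genuine gap precisely at the point you delegate to the ``cleanest remedy.'' Cutting $Y$ by $\dim Y-2$ general members of an $f$-ample linear system gives a surface $S$ whose image $f(S)$ is again a surface, and if $\dim f(F)\geq 1$ the curve $F\cap S$ dominates a curve inside $f(F)$; it is therefore \emph{not} $f|_S$-exceptional, and neither is $E|_S$. The classical Negativity Lemma \cite[Lemma 3.39]{KM} applied on $S$ only says that $-E|_S\geq 0$ provided $(f|_S)_*(-E|_S)\geq 0$, and this pushforward is exactly the contribution of the non-contracted components of $E\cap S$, whose sign you do not control. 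The standard reduction must first cut by pullbacks of $\dim f(F)$ general hyperplane sections of $X$ (equivalently, localize at the generic point of $f(F)$) so that the relevant centers become points, and only then cut by ample divisors on $Y$; your first step implicitly does this (the negative-definiteness is invoked ``over a generic point of $f(F)$''), but the remedy drops it, and with it the whole contradiction. A smaller issue in the first step: negative definiteness of the exceptional intersection matrix on a surface cut yields \emph{some} component $C$ of $F\cap S$ with $F\cdot C<0$; that a \emph{general} complete-intersection curve in a general fiber works requires more (e.g.\ bigness of $-F$ restricted to the generic fiber of $F\to f(F)$), which you do not address.

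The gap you yourself flag (one curve $C$ versus all $\epsilon$) does not actually need the surface detour. Work with an algebraic covering family of contracted curves in the general fibers, so that $E\cdot C$, $F\cdot C$, $H\cdot C$ are constant in the family; for each $\epsilon$ choose a member not contained in $\mathrm{Supp}\,D_\epsilon$ away from $F$, and note that you only need $\mult_F D_\epsilon$ to be small rather than zero, since $\sigma_F(E+\epsilon H)\to\sigma_F(E)=0$ by Proposition \ref{sigma_decom}. Then $E\cdot C\geq -\epsilon\,H\cdot C-\sigma_F(E+\epsilon H)\,\lvert F\cdot C\rvert\to 0$, contradicting $E\cdot C<0$ without any ``very general'' choice. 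Alternatively, given the paper's toolkit, the quickest route is through the $\sigma$-decomposition itself: writing $E=E_+-E_-$, one has $E_+=E+E_-$, hence $N_\sigma(E_+)\leq N_\sigma(E)+E_-=E_-$ by Proposition \ref{sigma_decom}(1); since $N_\sigma(G)=G$ for an effective $f$-exceptional divisor $G$ (\cite[III.5]{Nak04}), this forces $E_+\leq E_-$ and so $E_+=0$ — though that last fact about exceptional divisors is of course the negativity statement in Nakayama's framework, so it must be quoted, not rederived from the lemma being proved.
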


\subsection{Shokurov's $b$-divisor}

Let $X$ be a normal variety.

A \textbf{Weil $b$-divisor} $\WW$ over $X$ is the assignment to each
birational model $f:Y\to X$, a Weil divisor $\WW_Y$ on $Y$ that is compatible with pushforwards:
if $\phi:Z\to Y$ are two models over $X$, then
$\phi_*(\WW_Z)=\WW_Y$. The Weil-divisor $\WW_Y$ is called the
\textbf{trace} of $\WW$ on $Y$. We denote by $\Div(\XX)$ the group
of Weil $b$-divisors over $X$ and define $\QQ$-Weil ($\RR$-Weil,
resp.) $b$-divisor as elements of $\Div(\XX)\otimes \QQ$
($\Div(\XX)\otimes \RR$, resp.).

We call the Weil $b$-divisor $\CC$ a \textbf{Cartier $b$-divisor}
over $X$ if there exists a birational model $f:Y\to X$ such that
$\CC_Y$ is Cartier and for every other model $\phi:Z\dashrightarrow
Y$ with common resolution
$$\xymatrix{&W\ar_{s}[ld]\ar^{t}[rd]&\\Z\ar@{-->}[rr]&&Y}$$ we have
$\CC_Z=s_*(t^*\CC_Y)$. In this case, we say that $f:Y\to X$ is a
\textbf{determinant} of $\CC$. Similarly, we define $\QQ$-Cartier
$b$-divisor and $\RR$-Cartier $b$-divisor.

A $\RR$-Cartier $b$-divisor $\CC$ over $X$ is called
\textbf{$X$-nef} if there exists a (hence any) determinant $f:Y\to
X$ such that $\CC_Y$ is $f$-nef. We call a $\RR$-Weil $b$-divisor
$\WW$ $X$-nef if $\WW$ is a limit of $X$-nef $\RR$-Cartier
$b$-divisors, where the limit is taken in the numerical class of
every model. The following lemma gives a characterization of $X$-nef
$\RR$-Weil $b$-divisors.

\begin{lem}\label{nef_vs_mov}\cite[Lemma 2.10]{BdFF}
A $\RR$-Weil $b$-divisor $\WW$ is $X$-nef if and only if
$\WW_Y\in\Mov(Y/X)$ on every $\QQ$-factorial model $Y$. \qed
\end{lem}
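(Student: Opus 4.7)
The plan is to prove the two implications separately. For $(\Rightarrow)$, suppose $\WW$ is a limit of $X$-nef $\RR$-Cartier $b$-divisors $\CC^\alpha$. On any $\QQ$-factorial model $Y$, $\WW_Y$ is the limit of $(\CC^\alpha)_Y$ in $N^1(Y/X)_\RR$, and since $\Mov(Y/X)$ is closed, it suffices to show that each $(\CC^\alpha)_Y$ lies in $\Mov(Y/X)$. Fixing $\CC = \CC^\alpha$ with determinant $Y_0 \to X$, I would take a common resolution $p: V \to Y_0$ and $q: V \to Y$, so that $\CC_V = p^*\CC_{Y_0}$ is nef over $X$ and $\CC_Y = q_*\CC_V$. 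The key sub-claim is that $q_*$ carries $X$-nef $\RR$-Cartier divisors into $\Mov(Y/X)$: an $X$-ample $\QQ$-Cartier divisor $L$ on $V$ has a base-point-free multiple over $X$, and then $\Bs(q_*mL)$ is contained in $q(\Bs(mL))\cup\operatorname{Exc}(q) = \operatorname{Exc}(q)$, which has codimension at least two in $Y$. Hence $q_*$ sends $X$-ample $\QQ$-Cartier divisors to mobile ones; by closure of $\Mov(Y/X)$ and continuity of pushforward, the same holds for $X$-nef $\RR$-Cartier divisors, giving $\CC_Y\in\Mov(Y/X)$.

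For the converse, I would construct a net of $X$-nef $\RR$-Cartier $b$-divisors that converges to $\WW$ on every model. Given a $\QQ$-factorial model $Y$ and $\varepsilon>0$, first approximate: pick a mobile $\QQ$-Cartier $D$ on $Y$ with $\|D-\WW_Y\|<\varepsilon$, and fix $m$ so that $mD$ is integral Cartier. Since $D$ is mobile, $\codim\Bs(mD)\geq 2$, so a log resolution $g: Z\to Y$ of the base ideal of $|mD|$ over $X$ (which exists in characteristic zero) yields $g^*(mD) = M + E$, where $M$ is base-point-free over $X$ (hence $X$-nef) and $E\geq 0$ is $g$-exceptional because the cosupport of the base ideal has codimension at least two in $Y$. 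Then the Cartier $b$-divisor $\CC_{Y,\varepsilon}$ with determinant $Z$ and trace $M/m$ is $X$-nef, and $(\CC_{Y,\varepsilon})_Y = g_*(M/m) = g_*(g^*D - E/m) = D$ since $E$ is $g$-exceptional. For any other model $Y_0$ dominated by $Y$, the trace $(\CC_{Y,\varepsilon})_{Y_0}$ is the pushforward of $D$ via $Y\to Y_0$, so $(\CC_{Y,\varepsilon})_{Y_0} - \WW_{Y_0}$ is the pushforward of $D - \WW_Y$, hence small in norm. Indexing the approximations by pairs (finite family of target models $S$, $\varepsilon>0$), with $Y_S$ chosen $\QQ$-factorial dominating every model in $S$ and $\varepsilon$ scaled against the operator norms of the pushforwards $Y_S\to Y_0$ for $Y_0\in S$, one obtains a net that converges to $\WW$ on every model simultaneously.

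The main obstacle I anticipate is the reverse direction, which demands producing $X$-nef $\RR$-Cartier approximations from a Weil $b$-divisor whose data is only the collection of movable traces. The resolution-of-base-ideal step is what converts mobility into actual $X$-nefness, at the cost of an exceptional effective error that vanishes upon pushforward back to $Y$. Ensuring simultaneous compatibility across all models requires the net formulation; on any fixed target model $Y_0$ the error is bounded by $\varepsilon$ times the finite operator norm of a pushforward, which is controlled since every $N^1(Y/X)_\RR$ is finite-dimensional.
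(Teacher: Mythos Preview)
The paper does not prove this lemma; it is quoted from \cite[Lemma~2.10]{BdFF} and marked with a \qed, so there is no argument here to compare against. Your proposal is along the standard lines and is essentially correct. The forward direction is routine: pushing an $X$-free divisor along a birational $q:V\to Y$ can only create base locus inside $q(\operatorname{Exc}(q))$, which has codimension $\geq 2$ since $Y$ is normal, and then one passes to the closure. For the converse, your mechanism---approximate $\WW_Y$ by a mobile $\QQ$-divisor $D$, resolve the base ideal of $|mD|$ to split $g^*(mD)=M+E$ with $M$ free over $X$ and $E$ effective $g$-exceptional (the latter because $\codim\Bs(mD)\geq 2$), and take the Cartier $b$-divisor determined by $M/m$---is exactly the expected one.

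One technical point you should make explicit in the net argument: you use that pushforward along $\pi:Y_S\to Y_0$ is continuous for the $N^1(\cdot/X)$-topologies, i.e.\ that $\pi_*$ descends to numerical classes over $X$. This is not automatic, but it holds when $Y_0$ is $\QQ$-factorial: if $D\equiv_X 0$ on $Y_S$, write $D=\pi^*\pi_*D+E$ with $E$ $\pi$-exceptional; testing against $\pi$-vertical curves gives $E\equiv_\pi 0$, and then the Negativity Lemma (applied to $E$ and $-E$) forces $E=0$, whence $D=\pi^*\pi_*D$ and $\pi_*D\equiv_X 0$. Since convergence on $\QQ$-factorial models suffices (every model is dominated by one), your net construction goes through.
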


Let $\WW_1$ and $\WW_2$ be two $\RR$-Weil $b$-divisors. We say
$\WW_1\geq \WW_2$, if for every model $Y$, we have $(\WW_1)_Y\geq
(\WW_2)_Y$.

\subsection{Nakayama's $\sigma$-decomposition}


Let $X$ be a $\QQ$-factorial projective normal variety over $S$ and $D$ be an $S$-big $\RR$-divisor on $X$. For every prime divisor $\Gamma$ on $X$, we define
$$\sigma_\Gamma(D)=\inf\{\mult_\Gamma\Delta | \Delta\equiv_S D, \Delta\geq0\}.$$
Since $D$ is $S$-big, there exists an effective divisor $\Delta$
that is numerically equivalent to $D$. Hence, the infimum above is
not taken over an empty set.

\begin{rmk}
By the observation of Nakayama, this definition can be generalized
to $S$-pseudo-effective $\RR$-divisors. Let $D$ be an
$S$-pseudo-effective divisor on $X$. Fix an $S$-ample divisor $A$ on
$X$. Since for every $\epsilon>0$, $D+\epsilon A$ is $S$-big, we can
define $\sigma_\Gamma (D)=\lim_{\epsilon\downarrow
0}\sigma_\Gamma(D+\epsilon A)$. It is shown in \cite[Lemma
III.1.4-5]{Nak04} that $\sigma_\Gamma(D)$ is independent of the
choice of $A$ and only depends on the numerical class of $D$. However,
an example is given in \cite{Les15} that this limit can be $\infty$
when $S$ is not a point. In this paper, we only use the
$\sigma$-decomposition in the case that $X$ is birational to $S$,
hence every divisor is $S$-big (see \cite[Lemma
III.1.4(2)]{Nak04}).
\end{rmk}

It is shown in \cite[Lemma III.4.2]{Nak04} that there are finitely
many prime divisors $\Gamma$ on $X$ such that $\sigma_\Gamma(D)>0$,
which leads us to the following definition.

\begin{defn}
Let $D$ be an $S$-big $\RR$-divisor on a $\QQ$-factorial projective
variety $X$ over $S$. We define
$$N_\sigma(D)=\sum\sigma_\Gamma(D)\Gamma \text{\quad and \quad}
P_\sigma(D)=D-N_\sigma(D),$$ where $P_\sigma(D)$ and $N_\sigma(D)$
are called the positive and negative part of the
$\sigma$-decomposition, respectively.
\end{defn}

\begin{notn}
To be precise, the $\sigma$-decomposition here should be written as
$P_{\sigma/S}$ and $N_{\sigma/S}$ as we are using the relative
version. However, in order to make our notation concise, we will
omit the base $S$ when it is clear from the context.
\end{notn}

We record some properties of the $\sigma$-decomposition as below.

\begin{prop}\label{sigma_decom}
Let $D$ be an $S$-big $\RR$-divisor on a $\QQ$-factorial variety
$X$. Let $f:Y\to X$ be a proper birational morphism, where $Y$ is
normal.
\begin{enumerate}
\item $N_\sigma(D)=0$ if and only if $D\in\Mov(X/S)$.
\item $P_\sigma(D)$ is the largest $\RR$-divisor in $\Mov(X/S)$ that is no greater than $D$.
\item $f_*P_\sigma(f^*D)=P_\sigma(D)$. In other words, $P_\sigma(f^*D)$ defines a $\RR$-Weil $b$-divisor over $X$.
\item $\sigma_\Gamma$ is a continuous function on the big cone of $X$ over $S$. So is $P_\sigma$, where the convergence is coefficiently-wise.
\end{enumerate}
\end{prop}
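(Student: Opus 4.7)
The plan is to verify the four statements in turn, relying on the foundational machinery of $\sigma_\Gamma$ developed in \cite[Chapter III]{Nak04}. For part (1), the key identification is that $\sigma_\Gamma(D)=0$ if and only if $\Gamma\not\subset\SBs_-(D)$: one direction follows from the fact that an effective $\Delta\equiv_S D$ with arbitrarily small $\mult_\Gamma \Delta$ prevents $\Gamma$ from lying in $\Bs(D+\epsilon A)$ for small enough $\epsilon$ (for a fixed $S$-ample $A$), and the reverse direction runs the same perturbation argument backwards. Combined with Lemma \ref{mov_vs_bs}, $N_\sigma(D)=0$ is equivalent to $\SBs_-(D)$ containing no prime divisor, hence to $D\in\Mov(X/S)$.

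For part (2), suppose $P'\in\Mov(X/S)$ with $P'\leq D$. For each prime $\Gamma$ and $\epsilon>0$, part (1) together with the definition of $\sigma_\Gamma$ yields an effective $P''\equiv_S P'$ with $\mult_\Gamma P''<\epsilon$. Then $P''+(D-P')\equiv_S D$ is effective, so
$$\sigma_\Gamma(D)\leq \mult_\Gamma P''+\mult_\Gamma(D-P')<\epsilon+\mult_\Gamma(D-P').$$
Letting $\epsilon\to 0$ and summing over the finitely many $\Gamma$ with $\sigma_\Gamma(D)>0$ gives $N_\sigma(D)\leq D-P'$, i.e.\ $P'\leq P_\sigma(D)$. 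That $P_\sigma(D)$ itself lies in $\Mov(X/S)$ reduces, via (1), to the idempotency $\sigma_\Gamma(P_\sigma(D))=0$, a standard property of the $\sigma$-decomposition.

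For part (3), I will establish both inequalities $f_*P_\sigma(f^*D)\leq P_\sigma(D)$ and $P_\sigma(D)\leq f_*P_\sigma(f^*D)$. The first holds because $f_*$ sends $\Mov(Y/S)$ into $\Mov(X/S)$ (in codimension one the base locus of a pushforward is contained in the pushforward of the base locus), so $f_*P_\sigma(f^*D)\in\Mov(X/S)$; combined with $f_*P_\sigma(f^*D)\leq f_*f^*D=D$, the extremal property (2) on $X$ yields the inequality. For the reverse, $f^*P_\sigma(D)$ lies in $\Mov(Y/S)$ (pullback preserves movability along birational morphisms) and satisfies $f^*P_\sigma(D)\leq f^*D$, so (2) applied on $Y$ gives $f^*P_\sigma(D)\leq P_\sigma(f^*D)$, and pushing forward completes the argument.

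Part (4) follows from Nakayama's continuity statement: each $\sigma_\Gamma$ is continuous on the $S$-big cone \cite[Chapter III]{Nak04}, and because the set of $\Gamma$ with $\sigma_\Gamma>0$ is finite and locally uniformly bounded on the big cone, $P_\sigma$ is continuous coefficient-wise. The main obstacle is part (3): the functoriality of the $\sigma$-decomposition under birational pushforward and pullback is not formal from the infimum definition, and has to be bootstrapped through the extremal characterization (2) rather than deduced by direct calculation.
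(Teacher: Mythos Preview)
The paper's own proof is nothing more than a list of citations to \cite{Nak04} (Proposition~III.1.14 for (1)--(2), Theorem~III.2.5(1) for (3), Lemma~III.1.7(1) for (4)), so there is no argument to compare against. Your proposal goes further by sketching actual proofs, and parts (1), (2), and (4) are essentially fine as outlines, though they too lean on Nakayama at the decisive points.

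Part (3), however, has a genuine gap. Your reverse inequality $P_\sigma(D)\leq f_*P_\sigma(f^*D)$ rests on the assertion that ``pullback preserves movability along birational morphisms,'' i.e.\ $f^*\Mov(X/S)\subseteq\Mov(Y/S)$. This is false in general. Movability of $M$ only forces $\SBs_-(M)$ to avoid divisors; it may still contain codimension-two loci. If $C\subset\SBs_-(M)$ is such a locus and $f:Y\to X$ is the blow-up along $C$ with exceptional divisor $E$, then a direct computation shows that every effective $\Delta'\equiv_S f^*M$ satisfies $\mult_E\Delta'=\mult_C(f_*\Delta')$, so $\sigma_E(f^*M)$ equals the positive numerical asymptotic multiplicity of $M$ along $C$, and $f^*M\notin\Mov(Y/S)$. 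A concrete instance: take $M$ to be the strict transform of an ample divisor across a flop, and $C$ the flopping curve.

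The fix, which is what Nakayama actually does, is to bypass movability of the pullback and compare coefficients directly: for each non-$f$-exceptional prime $\Gamma$ on $Y$, pushing forward and pulling back effective numerical representatives gives $\sigma_\Gamma(f^*D)=\sigma_{f_*\Gamma}(D)$. Since the exceptional components of $N_\sigma(f^*D)$ are killed by $f_*$, this yields $f_*N_\sigma(f^*D)=N_\sigma(D)$ immediately, and hence (3), without ever needing $f^*P_\sigma(D)$ to be movable. Your first inequality $f_*P_\sigma(f^*D)\leq P_\sigma(D)$ via the extremal property (2) is correct and can be kept; only the reverse direction needs this repair.
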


\begin{proof}
(1) and (2) are \cite[Proposition III.1.14]{Nak04}. (3) is
\cite[Theorem III.2.5(1)]{Nak04}. (4) is \cite[Lemma
III.1.7(1)]{Nak04}.
\end{proof}

Inspired by the above proposition, we give the definition of
$\sigma$-closure.

\begin{defn}
Let $Y$ be a $\QQ$-factorial model over $X$ and $D$ be an $S$-big
$\RR$-divisor on $Y$. The $\sigma$-closure $\PP_\sigma(D)$ is
an $\RR$-Weil $b$-divisor such that: for every model $Z$ over $X$, let $W$ be
a model dominating $Y$ and $Z$.
$$\xymatrix{
& W \ar_{s}[ld] \ar^{t}[rd] & \\
Y \ar@{-->}[rr]&& Z }$$ Then the trace of $\PP_\sigma(D)$ on $Z$ is
$t_*P_\sigma(s^*D)$. It is well-defined by Proposition
\ref{sigma_decom}(3).
\end{defn}

\begin{lem}\label{sigma_is_nef}
The $\sigma$-closure is $X$-nef.
\end{lem}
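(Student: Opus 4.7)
The plan is to invoke Lemma \ref{nef_vs_mov}, which characterizes $X$-nef Weil $b$-divisors as exactly those whose trace on every $\QQ$-factorial model lies in the movable cone. So fix a $\QQ$-factorial model $Z$ over $X$ and choose a common $\QQ$-factorial resolution $W$ dominating both $Y$ and $Z$, with maps $s\colon W\to Y$ and $t\colon W\to Z$. By the very definition of the $\sigma$-closure, $(\PP_\sigma(D))_Z = t_*P_\sigma(s^*D)$, so the problem reduces to showing $t_*P_\sigma(s^*D)\in\Mov(Z/X)$.

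The key step is the following pushforward property: if $\alpha$ is an $X$-big $\RR$-divisor on $W$ with $\alpha\in\Mov(W/X)$, then $t_*\alpha\in\Mov(Z/X)$. To verify this through Proposition \ref{sigma_decom}(1), let $\Gamma$ be any prime divisor on $Z$ and let $\tilde\Gamma$ denote its strict transform on $W$, which is the unique prime divisor of $W$ mapping dominantly onto $\Gamma$. Since $\sigma_{\tilde\Gamma}(\alpha)=0$ by assumption, for each $\epsilon>0$ there is an effective $\RR$-divisor $\Delta\equiv_X\alpha$ on $W$ with $\mult_{\tilde\Gamma}\Delta<\epsilon$. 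Its pushforward $t_*\Delta$ is effective and satisfies $t_*\Delta\equiv_X t_*\alpha$; moreover, a direct check using the decomposition of $\Delta$ into prime components (noting that each non-$\tilde\Gamma$ component contributes nothing to the coefficient of $\Gamma$ in $t_*\Delta$) gives $\mult_\Gamma t_*\Delta=\mult_{\tilde\Gamma}\Delta<\epsilon$. Hence $\sigma_\Gamma(t_*\alpha)=0$, and applying Proposition \ref{sigma_decom}(1) on the $\QQ$-factorial variety $Z$ yields $t_*\alpha\in\Mov(Z/X)$.

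Applying this with $\alpha=P_\sigma(s^*D)$, which is $X$-movable by Proposition \ref{sigma_decom}(1), produces $(\PP_\sigma(D))_Z\in\Mov(Z/X)$ for every $\QQ$-factorial model $Z$ over $X$. Lemma \ref{nef_vs_mov} then forces $\PP_\sigma(D)$ to be $X$-nef. The only subtle point to watch is that the $\sigma$-decomposition is defined for big divisors, but as observed in the Remark preceding the Definition of the $\sigma$-closure, in our birational-over-$X$ setting every divisor is automatically $X$-big, so the decomposition applies both to $s^*D$ on $W$ and to $t_*\alpha$ on $Z$. The main potential obstacle is the numerical compatibility $t_*\Delta\equiv_X t_*\alpha$ at the level of $\RR$-divisors, but this is a standard property of pushforward under birational morphisms of projective varieties.
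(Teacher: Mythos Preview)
Your proof is correct and follows essentially the same route as the paper. The paper's argument is a one-line invocation of Proposition~\ref{sigma_decom}(2) and Lemma~\ref{nef_vs_mov}, leaving implicit the fact that the pushforward of a movable class along a birational morphism to a $\QQ$-factorial target remains movable; you make this step explicit via the characterization $N_\sigma=0$ from Proposition~\ref{sigma_decom}(1). The one point worth flagging is that your claim $t_*\Delta\equiv_X t_*\alpha$ genuinely uses that $Z$ is $\QQ$-factorial (so that $N^1(W/X)\cong t^*N^1(Z/X)\oplus\langle t\text{-exceptional divisors}\rangle$ via the Negativity Lemma), but you have arranged this, so the argument goes through.
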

\begin{proof}
The lemma follows directly from Proposition \ref{sigma_decom}(2) and
Lemma \ref{nef_vs_mov}.
\end{proof}

\subsection{Pullback of Weil divisors}

We recall the definition of pullback of Weil divisors from
\cite{dFH,BdFF} as below.

Let $X$ be a normal variety, $D$ be a $\RR$-Weil divisor on $X$ and
$E$ be a prime divisor over $X$. We define the valuation
$v_E(D)=v_E(\OO_X(-D))=v_E(\OO_X(\lfloor -D\rfloor))$ as
$$v_E(D)=\min\{v_E(\phi)\ |\ \phi\in\OO_U(\lfloor -D\rfloor), U\cap
c_X(E)\neq \emptyset\}.$$ If $D$ is Cartier, we see that $v_E(D)$ is
the usual valuation.

Suppose $f:Y\to X$ is a proper birational morphism and $Y$ is
normal. The natural pullback $f^\natural D$ is defined as
$$f^\natural D=\sum v_E(D)E,$$ where the sum runs through all prime
divisors on $Y$. It is easy to see that $\OO_Y(-f^\natural
D)=(\OO_X(-D)\cdot\OO_Y)^{\vee\vee}$. The natural pullback is also
known as $Z(\OO_X(-D))_f$ in \cite{BdFF}, where $Z(\OO_X(-D))$ is
viewed as a Weil $b$-divisor consisting of natural pullbacks.

In general, the natural pullbacks do not behave well under
composition.

\begin{lem}\label{composition_natural}\cite[Lemma 2.7]{dFH}
Let $f:Y\to X$ and $g:Z\to Y$ be two proper birational morphisms
between normal varieties, D be an $\RR$-Weil divisor on
$X$. Then $(f\circ g)^\natural D-g^\natural(f^\natural D)$ is
not necessarily zero . However, it is effective and $g$-exceptional.
Moreover, if $\OO_X(-D)\cdot\OO_Y$ is an invertible sheaf on $Y$, then
$(f\circ g)^\natural D=g^\natural(f^\natural D)$. \qed
\end{lem}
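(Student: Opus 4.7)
The plan is to work throughout with the identification $\OO_W(-h^\natural D)=(\OO_X(-D)\cdot\OO_W)^{\vee\vee}$ for any proper birational morphism $h:W\to X$ recalled above, and translate divisorial (in)equalities into inclusions of rank-one reflexive subsheaves of $K(Z)$. Concretely,
$$\OO_Z(-(f\circ g)^\natural D)=(\OO_X(-D)\cdot\OO_Z)^{\vee\vee},\qquad \OO_Z(-g^\natural(f^\natural D))=(\OO_Y(-f^\natural D)\cdot\OO_Z)^{\vee\vee},$$
so the entire lemma reduces to comparing the double duals of these two $\OO_Z$-submodules.

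For effectiveness, I would start from the tautological inclusion $\OO_X(-D)\cdot\OO_Y\hookrightarrow \OO_Y(-f^\natural D)$, apply $(-)\cdot\OO_Z$, and take reflexive hulls to obtain $\OO_Z(-(f\circ g)^\natural D)\subseteq \OO_Z(-g^\natural(f^\natural D))$. This coefficient-wise inequality of rank-one reflexive subsheaves of $K(Z)$ is exactly $(f\circ g)^\natural D\geq g^\natural(f^\natural D)$. The last assertion of the lemma follows by the same mechanism: if $\OO_X(-D)\cdot\OO_Y$ is locally free it already coincides with its own reflexive hull $\OO_Y(-f^\natural D)$, so $\OO_Y(-f^\natural D)\cdot\OO_Z=\OO_X(-D)\cdot\OO_Z$ as subsheaves of $K(Z)$ and their double duals agree.

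For $g$-exceptionality I would check that the coefficient of $(f\circ g)^\natural D-g^\natural(f^\natural D)$ vanishes at every prime divisor $E\subset Z$ that is not $g$-exceptional. Such an $E$ is the strict transform of a prime divisor $E'\subset Y$, and $v_E=v_{E'}$ as valuations on $K(X)$. Because $v_{E'}(D)$ is the minimum of the integer-valued valuations $v_{E'}(\phi)$ with $\phi\in\OO_X(\lfloor-D\rfloor)$, it is itself an integer, and by the very definition of $f^\natural D$ its coefficient at $E'$ equals $v_{E'}(D)$. Hence near $\eta_{E'}$ the reflexive sheaf $\OO_Y(-f^\natural D)$ is free of rank one, generated by any $\phi\in\OO_X(\lfloor-D\rfloor)$ realizing $v_{E'}(D)$. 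This identifies $\OO_X(-D)\cdot\OO_Y$ with $\OO_Y(-f^\natural D)$ in a neighborhood of $\eta_{E'}$. Since $g$ is an isomorphism near $\eta_E$ (its image $\eta_{E'}$ lies outside the codimension-$\geq 2$ set $g(\mathrm{Exc}(g))$), pulling back gives the coincidence of $\OO_X(-D)\cdot\OO_Z$ with $\OO_Y(-f^\natural D)\cdot\OO_Z$ near $\eta_E$, and their reflexive hulls also agree there, forcing the coefficient of the difference at $E$ to be zero.

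The main obstacle is exactly this local analysis at codimension-one points of $Z$: the reflexivization operation can a priori introduce ``new'' sections that survive further pullback and spoil the equality at non-exceptional primes, and one must exploit both the integrality $v_{E'}(D)\in\mathbb{Z}$ and the availability of a local generator of $\OO_Y(-f^\natural D)$ pulled back from $\OO_X(\lfloor-D\rfloor)$ to rule this out. Exhibiting an explicit example where $(f\circ g)^\natural D-g^\natural(f^\natural D)\neq 0$ (justifying the ``not necessarily zero'' clause) is orthogonal to the main argument and would be handled separately.
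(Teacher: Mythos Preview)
The paper supplies no proof of its own here---the lemma is stated with a citation to \cite[Lemma 2.7]{dFH} and closed with \qed---so there is nothing substantive to compare your argument against. Your sheaf-theoretic approach is correct and is essentially the standard one: effectiveness follows from the inclusion $\OO_X(-D)\cdot\OO_Y\hookrightarrow(\OO_X(-D)\cdot\OO_Y)^{\vee\vee}=\OO_Y(-f^\natural D)$ after pulling back to $Z$ and reflexivizing; the ``moreover'' clause is immediate once $\OO_X(-D)\cdot\OO_Y$ is already reflexive; and $g$-exceptionality is checked valuation-by-valuation at non-$g$-exceptional primes, using that $\OO_{Y,\eta_{E'}}$ is a DVR in which $\OO_X(-D)\cdot\OO_Y$ and its reflexive hull coincide (a generator being any $\phi\in\OO_X(\lfloor-D\rfloor)$ achieving the minimum $v_{E'}(D)$). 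One small point worth making explicit in your write-up: the identity $(\OO_X(-D)\cdot\OO_Y)\cdot\OO_Z=\OO_X(-D)\cdot\OO_Z$ used in the effectiveness step holds because $g^*$ is right exact, so the image of $g^*f^*\OO_X(-D)\to\OO_Z$ equals the image of $g^*(\OO_X(-D)\cdot\OO_Y)\to\OO_Z$.
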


\begin{lem}\label{anti_free_natural}\cite[Lemma 1.8 and
2.10]{BdFF}
\begin{enumerate}
\item \label{pt_gg} If $\OO_X(-D)\cdot\OO_Y$ is an invertible sheaf, then
$-f^\natural D$ is relatively globally generated over $X$. In
other words, the following canonical homomorphism is surjective.
$$f^*f_*\OO_Y(-f^\natural D)\to \OO_Y(-f^\natural D).$$
\item If $-f^\natural D$ is $\QQ$-Cartier, then $-f^\natural D\in
\Mov(Y/X)$.
\end{enumerate}
\end{lem}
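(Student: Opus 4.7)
The plan for part (1) is to factor the natural map in question through $f^*\OO_X(-D)$. By construction, $\OO_X(-D)\cdot\OO_Y$ is the image of a canonical surjection $f^*\OO_X(-D)\twoheadrightarrow \OO_X(-D)\cdot\OO_Y$, and under the invertibility hypothesis this target equals its own double dual $\OO_Y(-f^\natural D)$. The definition of $v_E(D)$ as an infimum of valuations directly gives an inclusion $\OO_X(-D)\subseteq f_*\OO_Y(-f^\natural D)$; applying $f^*$ to this inclusion lets me factor the surjection above through $f^*f_*\OO_Y(-f^\natural D)$, which forces the $f$-global generation claimed in (1).

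For part (2), my plan is to reduce to part (1) via a further blow-up. I would choose a proper birational morphism $g\colon Z\to Y$ with $Z$ normal, for instance the normalization of the blow-up of $Y$ along the ideal-like sheaf $\OO_X(-D)\cdot\OO_Y$, so that $\OO_X(-D)\cdot\OO_Z$ becomes invertible. Part (1) applied to the composition $f\circ g$ then shows that $-(f\circ g)^\natural D$ is $(f\circ g)$-globally generated over $X$. Since $-f^\natural D$ is $\QQ$-Cartier by hypothesis, natural and ordinary pullback along $g$ coincide, and Lemma \ref{composition_natural} yields
$$g^*(-f^\natural D) \;=\; -(f\circ g)^\natural D + E$$
for some effective $g$-exceptional divisor $E$. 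Thus $g^*(-f^\natural D)$ is the sum of an $(f\circ g)$-base-point-free $\QQ$-divisor and an effective $g$-exceptional divisor.

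By Lemma \ref{mov_vs_bs} it suffices to show that $\SBs_-(-f^\natural D/X)$ contains no divisor. Suppose for contradiction that some prime $\Gamma\subset Y$ belongs to $\SBs_-(-f^\natural D/X)$, with strict transform $\widetilde\Gamma\subset Z$. Compatibility of base loci under the birational morphism $g$ (via the projection formula $g_*\OO_Z(mg^*D')=\OO_Y(mD')$ on Cartier approximations $D'=-f^\natural D+\epsilon H$) forces $\widetilde\Gamma\subseteq \SBs_-(g^*(-f^\natural D)/X)$. On the other hand, for any $(f\circ g)$-ample $A$ and small $\delta>0$ I would multiply nowhere-vanishing global sections of large multiples of $-(f\circ g)^\natural D+\delta A$ by the canonical section cutting out $E$, producing sections of $g^*(-f^\natural D)+\delta A$ whose common zero locus is contained in $\mathrm{Supp}(E)$. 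Hence $\SBs_-(g^*(-f^\natural D)/X)\subseteq \mathrm{Supp}(E)$, which is $g$-exceptional; but $\widetilde\Gamma$ is not $g$-exceptional, contradicting the previous containment.

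The main obstacle I anticipate is this last step: one must verify that the $(f\circ g)$-base-point-freeness of $-(f\circ g)^\natural D$ survives both a small ample perturbation and the addition of the effective exceptional $E$, so that divisorial components of the diminished base locus on $Z$ cannot escape $\mathrm{Supp}(E)$. Once that multiplication-of-sections argument is made precise with the aid of the ample perturbation to stay in the big cone, the two implications above are routine and the proof is complete.
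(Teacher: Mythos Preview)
Your argument for part (1) is essentially the paper's: both factor the evaluation map through the surjection $f^*\OO_X(-D)\twoheadrightarrow\OO_X(-D)\cdot\OO_Y$ and identify the target with $\OO_Y(-f^\natural D)$ under the invertibility hypothesis.

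For part (2) your strategy is correct but considerably more elaborate than the paper's. The paper argues by \emph{pushforward} rather than pullback: choose $m$ so that $-mf^\natural D$ is Cartier and a model $\phi:Z\to Y$ with $\OO_X(-D)\cdot\OO_Z$ invertible; part (1) makes $-m(f\phi)^\natural D$ globally generated over $X$, and since $\phi_*(-m(f\phi)^\natural D)=-mf^\natural D$ with $\phi$ an isomorphism outside codimension $2$, the relative base locus of $-mf^\natural D$ is forced into that codimension-$2$ set. No diminished base loci, no contradiction argument. Your route---pulling back, decomposing as (base-point-free)$+$(effective $g$-exceptional), and tracking $\SBs_-$---also works, but the step you justify by ``projection formula'' is actually subtler than the one you flag as the obstacle: the projection formula gives $\SBs(g^*D'/X)=g^{-1}\SBs(D'/X)$ for $D'=-f^\natural D+\epsilon H$, yet $g^*H$ is only $(f\circ g)$-nef, not ample, so this does not literally land $\widetilde\Gamma$ inside $\SBs_-(g^*(-f^\natural D)/X)$. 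The clean fix is to bypass base loci and use $\sigma$ directly: $\sigma_\Gamma(-f^\natural D)=\sigma_{\widetilde\Gamma}(g^*(-f^\natural D))\leq\sigma_{\widetilde\Gamma}(-(f\circ g)^\natural D)+\mult_{\widetilde\Gamma}E=0$, the equality being Proposition~\ref{sigma_decom}(3).
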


\begin{proof}
\begin{enumerate}
\item Since $\OO_X(-D)\cdot\OO_Y$ is the image of $f^*\OO_X(-D)$ under
the homomorphism $f^*\OO_X(-D)\to f^*\OO_X$, we have a surjection
$$f^*\OO_X(-D)\to\OO_X(-D)\cdot\OO_Y.$$ It is obvious that
$f_*\OO_Y(-f^\natural D)=\OO_X(-D)$. The statement follows.
\item Suppose $-mf^\natural D$ is Cartier. Let $g:Z\to X$ be a log
resolution of $X$ such that $\OO_X(-D)\cdot \OO_Z$ is an invertible
sheaf and $g$ factors through $f$ via $\phi:Z\to Y$. Such log
resolution exists by \cite[Theorem 4.2]{dFH}. By part (\ref{pt_gg}),
$-mg^\natural D$ is relatively globally generated over $X$. Hence,
$-mf^\natural D=\phi_*(-mg^\natural D)$ is $f$-mobile.
\end{enumerate}
\end{proof}

It is shown in \cite[Lemma 2.8]{dFH} that for every positive integer
$m$, we have $f^\natural D\geq \frac{1}{m}f^\natural(mD)$. Thus, we
can define the pullback of $D$ as a $\RR$-Weil divisor,
$$f^*D=\liminf_{m\to\infty}\frac{f^\natural(mD)}{m}.$$ It is not
hard to see that the $\liminf$ above is actually a limit
(\cite[Lemma 2.1]{BdFF}).

\begin{lem}\label{anti_movable}
If $Y$ is $\QQ$-factorial, then $-f^*D\in\Mov(Y/X)$.
\end{lem}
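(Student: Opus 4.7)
The plan is to realize $-f^*D$ as a limit, inside the finite dimensional space $N^1(Y/X)$, of divisors that are already known to lie in $\Mov(Y/X)$, and then invoke the fact that $\Mov(Y/X)$ is closed by definition. The natural approximants to use are the rescaled natural pullbacks $\frac{1}{m}f^\natural(mD)$, since these converge coefficient-wise to $f^*D$ by the very definition of $f^*D$ and the fact (recorded just before the statement) that the $\liminf$ is actually a limit.

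First I would apply Lemma \ref{anti_free_natural}(2) to the divisor $mD$ for each positive integer $m$. Because $Y$ is $\QQ$-factorial by hypothesis, the Weil divisor $-f^\natural(mD)$ is automatically $\QQ$-Cartier, so the lemma gives $-f^\natural(mD)\in\Mov(Y/X)$ and hence $-\tfrac{1}{m}f^\natural(mD)\in\Mov(Y/X)$ for every $m\geq 1$.

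Next I would control the convergence of these classes in $N^1(Y/X)$. Any prime divisor $E$ on $Y$ that is not $f$-exceptional dominates a prime divisor $E'$ on $X$, and for such $E$ one has $v_E(mD)=m\cdot\mult_{E'}(D)$; consequently the non-exceptional parts of $\tfrac{1}{m}f^\natural(mD)$ are all equal to the strict transform of $D$ and agree with the non-exceptional part of $f^*D$. Therefore the difference $\tfrac{1}{m}f^\natural(mD)-f^*D$ is supported on the finite set of $f$-exceptional prime divisors of $Y$, and the coefficients of this difference tend to $0$ as $m\to\infty$ by \cite[Lemma 2.1]{BdFF}. Since $Y$ is $\QQ$-factorial, each such Weil divisor has a numerical class in $N^1(Y/X)$, and the numerical class depends continuously on the coefficients inside the finite-dimensional subspace of $\RR$-divisors supported on that fixed finite set. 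Hence $-\tfrac{1}{m}f^\natural(mD)\to -f^*D$ in $N^1(Y/X)$.

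Finally, $\Mov(Y/X)$ is by construction the closure of a cone in $N^1(Y/X)$ and is therefore closed, so the limit $-f^*D$ lies in $\Mov(Y/X)$. The only mild technical point is ensuring that the coefficient-wise convergence of $\RR$-Weil divisors translates to numerical convergence, but this is handled by the observation above that all the divisors involved are supported on a common finite set of prime divisors on $Y$.
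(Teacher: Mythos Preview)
Your proof is correct and is exactly the argument the paper has in mind: apply Lemma~\ref{anti_free_natural}(2) to each $mD$ (the $\QQ$-factoriality of $Y$ making $-f^\natural(mD)$ automatically $\QQ$-Cartier), and then pass to the limit defining $f^*D$ inside the closed cone $\Mov(Y/X)$. The paper compresses this to the one line ``obvious by Lemma~\ref{anti_free_natural} and the definition.'' One tiny imprecision: for an $\RR$-divisor $D$ with irrational coefficients the identity $v_E(mD)=m\cdot\mult_{E'}(D)$ along non-exceptional $E$ need not hold exactly because of the floor in the definition of $v_E$, but this is harmless since all the divisors involved are supported on the fixed finite set consisting of the exceptional divisors together with the strict transform of $\operatorname{supp}(D)$, and coefficient-wise convergence there still gives convergence in $N^1(Y/X)$.
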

\begin{proof}
This is obvious by Lemma \ref{anti_free_natural} and the definition.
\end{proof}

\begin{lem}\label{dp}
Let $f:Y\to X$ and $g:Z\to Y$ be two proper birational morphisms
between normal varieties. Then for every $\QQ$-Weil divisor $D$ on
$X$, $(f\circ g)^*D-g^*(f^*D)$ is an effective $g$-exceptional
divisor.
\end{lem}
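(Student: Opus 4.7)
The plan is to apply Lemma \ref{composition_natural} at each integer multiple $mD$ and pass to the limit defining the Weil pullback. After clearing denominators we may assume $D$ is an integral Weil divisor. For every integer $m \geq 1$, Lemma \ref{composition_natural} yields
\[
E_m := (f\circ g)^\natural(mD) - g^\natural(f^\natural(mD)) \geq 0,
\]
an effective $g$-exceptional Weil divisor on $Z$. Dividing by $m$ and letting $m\to\infty$, the first term $(f\circ g)^\natural(mD)/m$ converges coefficient-wise to $(f\circ g)^*D$ by the very definition of the pullback; the task is then to compare the limit of $g^\natural(f^\natural(mD))/m$ with $g^*(f^*D)$.

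I would argue prime-by-prime on $Z$. Fix a prime divisor $E \subset Z$ with associated valuation $v_E$. When $E$ is not $g$-exceptional, set $E_Y := g(E)$; since $g$ is an isomorphism at the generic point of $E$, the valuations $v_E$ and $v_{E_Y}$ coincide. Unwinding the definition of $g^*$ applied to the $\RR$-Weil divisor $f^*D$, the coefficient of $E$ in $g^*(f^*D)$ equals $\text{coeff}_{E_Y}(f^*D) = \lim_m v_{E_Y}(\OO_X(-mD))/m$, which matches the coefficient of $E$ in $(f\circ g)^*D$; so the coefficient of $E$ in $(f\circ g)^*D - g^*(f^*D)$ vanishes.

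When $E$ is $g$-exceptional, the containment $\OO_X(-mD) \cdot \OO_X(-nD) \subseteq \OO_X(-(m+n)D)$ makes $m \mapsto v_{E'}(\OO_X(-mD))$ subadditive for each prime $E'$ on $Y$, so by subadditivity $f^*D = \inf_m f^\natural(mD)/m$ on $Y$; in particular $f^*D \leq f^\natural(mD)/m$ for every $m$. Taking floors on $Y$ then gives $\OO_Y(-f^\natural(mD)) \subseteq \OO_Y(\lfloor -m f^*D\rfloor)$, yielding the chain
\[
v_E(\OO_X(-mD)) \geq v_E(\OO_Y(-f^\natural(mD))) \geq v_E(\OO_Y(\lfloor -m f^*D\rfloor)),
\]
whose first step is Lemma \ref{composition_natural}. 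Dividing by $m$ and passing to the limit gives $\text{coeff}_E((f\circ g)^*D) \geq \text{coeff}_E(g^*(f^*D))$. Combining the two cases, $(f\circ g)^*D - g^*(f^*D)$ has nonnegative coefficients which vanish on every non-$g$-exceptional prime, so it is effective and $g$-exceptional.

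The main obstacle is the $g$-exceptional case, where one must carefully interchange the limit $m\to\infty$ with the reflexive closure $\OO_Y(-f^\natural(mD)) = (\OO_X(-mD)\cdot\OO_Y)^{\vee\vee}$ and with the floor operation; the monotonicity of both constructions with respect to the divisorial bound $f^*D \leq f^\natural(mD)/m$ is what makes the inequality go through.
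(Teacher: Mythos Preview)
Your argument is correct and is precisely the unpacking of the paper's one-line proof, which simply reads ``The lemma follows from Lemma~\ref{composition_natural} and definition.'' Your prime-by-prime analysis---in particular the bound $\OO_Y(-f^\natural(mD))\subseteq\OO_Y(\lfloor -m f^*D\rfloor)$ coming from $f^*D\leq f^\natural(mD)/m$ in the $g$-exceptional case---is exactly the content hidden in that sentence, and makes rigorous the passage from the natural-pullback inequality of Lemma~\ref{composition_natural} to the limit defining $(\,\cdot\,)^*$.
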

\begin{proof}
The lemma follows from Lemma \ref{composition_natural} and
definition.
\end{proof}

We will use the following notation from \cite[Remark 2.4]{BdFF}.
\begin{defn} $\Env_X(D)$ is the $\RR$-Weil $b$-divisor whose trace
on every model $Y$ is $-f^*(-D)$.
\end{defn}
It is shown in \cite[Corollary 2.13]{BdFF} that $\Env_X(D)$ is the
largest $X$-nef $\RR$-Weil $b$-divisor $\WW$ such that $\WW_X=D$.

\begin{lem}\label{double_pullback}
In the setting of Lemma \ref{dp}, if $Z$ is $\QQ$-factorial, then
$-(f\circ g)^*D=P_\sigma(-g^*(f^*D))$. Hence,
$\Env_X(-D)=\PP_\sigma(-f^*D)$.
\end{lem}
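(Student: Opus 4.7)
The plan is to prove the $b$-divisor identity $\Env_X(-D)=\PP_\sigma(-f^*D)$ by invoking the maximality characterization of $\Env_X(-D)$ as the largest $X$-nef $b$-divisor with trace $-D$ on $X$, and then to read off the divisor-level identity $-(f\circ g)^*D=P_\sigma(-g^*(f^*D))$ by taking traces on $Z$. The key technical input that powers everything is the ``easy'' inequality, valid on any $\QQ$-factorial model dominating $Y$: by Lemma \ref{dp}, $E:=(f\circ g)^*D-g^*(f^*D)$ is effective and $g$-exceptional, so $-(f\circ g)^*D\leq -g^*(f^*D)$; combined with $-(f\circ g)^*D\in\Mov(Z/X)$ from Lemma \ref{anti_movable}, the maximality of $P_\sigma$ in Proposition \ref{sigma_decom}(2) yields $-(f\circ g)^*D\leq P_\sigma(-g^*(f^*D))$.

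First I would check $\PP_\sigma(-f^*D)\leq\Env_X(-D)$. The $\sigma$-closure is $X$-nef by Lemma \ref{sigma_is_nef}, so it suffices to show that its trace on $X$ is $-D$. Pick a $\QQ$-factorial birational $s:W\to Y$ and set $t:=f\circ s:W\to X$; the trace equals $t_*P_\sigma(s^*(-f^*D))$. The upper bound $P_\sigma(s^*(-f^*D))\leq s^*(-f^*D)$ (i.e., $N_\sigma\geq 0$) pushes forward to $\leq f_*(-f^*D)=-D$, while the easy inequality applied on $W$, namely $-(f\circ s)^*D\leq P_\sigma(s^*(-f^*D))$, pushes forward to $-D\leq t_*P_\sigma(s^*(-f^*D))$. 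Hence the trace is exactly $-D$, and the maximality of $\Env_X(-D)$ forces $\PP_\sigma(-f^*D)\leq\Env_X(-D)$.

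For the reverse inequality of $b$-divisors, fix any $\QQ$-factorial $Z$ dominating $Y$. The trace of $\Env_X(-D)$ on $Z$ is $-(f\circ g)^*D$ by definition, and the trace of $\PP_\sigma(-f^*D)$ on $Z$ is $P_\sigma(-g^*(f^*D))$ (taking $W=Z$ in the definition of $\PP_\sigma$). The easy inequality then reads $\Env_X(-D)_Z\leq \PP_\sigma(-f^*D)_Z$. Since $\QQ$-factorial models dominating $Y$ are cofinal in the inverse system of birational models over $X$ and $b$-divisors are determined by their traces on any cofinal family via pushforward compatibility, this extends to the global inequality $\Env_X(-D)\leq \PP_\sigma(-f^*D)$. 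Combining both directions yields the $b$-divisor identity, and taking the trace on $Z$ recovers the divisor-level identity as a corollary. The main point that requires care is the trace computation when $Y$ is not itself $\QQ$-factorial: one must choose a $\QQ$-factorialization of $Y$ to define $\sigma$-decompositions, and track pushforward compatibility across models to ensure that the equalities and inequalities really glue into a $b$-divisor statement.
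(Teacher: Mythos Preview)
Your proposal is correct and follows essentially the same approach as the paper: both combine the ``easy inequality'' $-(f\circ g)^*D\leq P_\sigma(-g^*(f^*D))$ (from Lemma~\ref{dp}, Lemma~\ref{anti_movable}, and Proposition~\ref{sigma_decom}(2)) with the observation that $\PP_\sigma(-f^*D)$ is $X$-nef with trace $-D$ on $X$, hence bounded above by $\Env_X(-D)$ via maximality. Your write-up is more careful about the trace computation on $X$, the cofinality argument, and the $\QQ$-factoriality caveat on $Y$, all of which the paper passes over in a single sentence (and with a typo: the paper writes ``trace on $X$ is $D$'' where it means $-D$), but the logical skeleton is identical.
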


\begin{proof}
By Lemma \ref{anti_movable}, $-(f\circ g)^*D$ is a movable
$\RR$-divisor that is no greater than $-g^*(f^*D)$. Using Lemma
\ref{sigma_decom}, we have $-(f\circ g)^*D\leq
P_\sigma(-g^*(f^*D))$. On the other hand, $\PP_\sigma(-f^*D)$ is an
$X$-nef $\RR$-Weil $b$-divisor whose trace on $X$ is $D$. Hence,
$\Env_X(-D)\geq \PP_\sigma(-f^*D)$. The lemma follows.
\end{proof}

\begin{lem}\label{nef_determinant}\cite[Corollary 2.13]{BdFF}
If $\Env_X(D)$ is $\RR$-Cartier with a determinant on $Y$, then the trace of $\Env_X(D)$ on $Y$ is $X$-nef. \qed
\end{lem}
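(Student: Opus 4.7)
The goal is to show that $L := \Env_X(D)_Y$, which is $\RR$-Cartier by hypothesis, is $f$-nef; equivalently, $L \cdot C \geq 0$ for every $f$-contracted curve $C \subset Y$. My plan is to approximate $L$ (pulled back to a suitable higher model) by Cartier divisors that are relatively globally generated over $X$, hence nef, and then pass to the limit. First I would replace $Y$ by a higher $\QQ$-factorial birational model $g \colon Z \to Y$; since any model dominating a determinant is again a determinant, one has $\Env_X(D)_Z = g^* L$, and $f$-nefness of $L$ reduces by the projection formula applied to strict transforms of $f$-contracted curves to $(f \circ g)$-nefness of $g^* L$ on $Z$.

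Next, I would invoke \cite[Theorem 4.2]{dFH} to arrange, by further blowing up if necessary, that $\OO_X(m m_0 D) \cdot \OO_Z$ is invertible for a cofinal sequence of positive integers $m$, where $m_0$ is a fixed sufficiently divisible integer. For each such $m$, Lemma~\ref{anti_free_natural}(1) applied to the divisor $m m_0 (-D)$ shows that $-(f \circ g)^\natural(m m_0 (-D))$ is a Cartier divisor globally generated over $X$, hence $(f \circ g)$-nef. By the subadditivity inequality $f^\natural D \geq f^\natural(mD)/m$ of \cite[Lemma 2.8]{dFH}, the divisors $D_m := -(f \circ g)^\natural(m m_0 (-D))/(m m_0)$ form a non-decreasing sequence (along divisibility) whose coefficient-wise supremum equals $-(f \circ g)^*(-D) = g^* L$, the last equality using that $Y$ is a determinant.

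The differences $E_m := g^* L - D_m$ are then effective $\RR$-Cartier divisors, supported in a fixed finite set of prime divisors of $Z$, with coefficients decreasing to zero as $m \to \infty$. Because $N^1(Z/X)$ is finite-dimensional and the $E_m$ all lie in a fixed finite-dimensional subspace of Weil divisors, coefficient-wise convergence implies $[E_m] \to 0$ in $N^1(Z/X)$. Hence for any $(f \circ g)$-contracted curve $\tilde C$ on $Z$, $E_m \cdot \tilde C \to 0$, and combined with $D_m \cdot \tilde C \geq 0$ this yields $g^* L \cdot \tilde C \geq 0$. Applying this to the strict transform of each $f$-contracted $C \subset Y$ gives $L \cdot C \geq 0$.

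The main obstacle is the uniform-invertibility step: for a non-$\QQ$-Cartier Weil divisor $D$, the graded family $\{\OO_X(kD)\}_k$ need not be finitely generated, so a single $Z$ above $Y$ principalizing $\OO_X(m m_0 D)$ for all $m$ may not exist. A workaround is to let $Z = Z_m$ depend on $m$ and extract, for each $\epsilon > 0$, the bound $L \cdot C \geq -\epsilon$ from the same approximation on $Z_m$; letting $\epsilon \downarrow 0$ then recovers $L \cdot C \geq 0$.
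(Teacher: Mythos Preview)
The paper does not actually prove this lemma; it is stated with a bare citation to \cite[Corollary 2.13]{BdFF} and a \qed. So there is no ``paper's own proof'' to compare against. Your attempt to supply a direct argument is therefore going beyond what the paper does, and the natural benchmark is whether your argument stands on its own, and how it relates to the machinery already available in the paper (Lemma~\ref{nef_vs_mov} and Lemma~\ref{anti_movable}).

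Your main line---approximate $g^*L$ on a fixed higher model $Z$ by the relatively globally generated, hence nef, divisors $D_m=-(f\circ g)^\natural(-mm_0D)/(mm_0)$ and pass to the limit in $N^1(Z/X)$---is correct \emph{when such a $Z$ exists}. You rightly flag that a single $Z$ principalizing all $\OO_X(mm_0D)$ need not exist. The problem is that your proposed workaround does not close this gap. On a varying model $Z_m$ you get $g_m^*L=D_m+E_m$ with $D_m$ nef and $E_m\geq 0$ of small coefficients, and intersecting with the strict transform $\tilde C_m$ of a fixed curve $C\subset Y$ gives
\[
L\cdot C \;=\; g_m^*L\cdot\tilde C_m \;\geq\; E_m\cdot\tilde C_m.
\]
But $E_m\cdot\tilde C_m$ is not controlled: $E_m$ is only $h_m$-exceptional over $X$, not $g_m$-exceptional over $Y$, so $\tilde C_m$ may lie inside a component $P$ of $E_m$ (for instance the strict transform of an $f$-exceptional divisor containing $C$), and then $P\cdot\tilde C_m$ can be arbitrarily negative on $Z_m$ even though its coefficient in $E_m$ is small. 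Small coefficients do \emph{not} translate into small intersection numbers once the ambient model varies. Thus the asserted bound $L\cdot C\geq -\epsilon$ does not follow.

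A cleaner route, using only results already recorded in the paper, is to observe that Lemma~\ref{anti_movable} gives $\Env_X(D)_Z=-\,(f\!\circ\!g)^*(-D)\in\Mov(Z/X)$ for every $\QQ$-factorial model $Z$; by Lemma~\ref{nef_vs_mov} this says $\Env_X(D)$ is $X$-nef as an $\RR$-Weil $b$-divisor. When $\Env_X(D)$ is moreover $\RR$-Cartier with determinant $Y$, one then needs the (nontrivial) consistency of the Weil and Cartier notions of $X$-nef for $\RR$-Cartier $b$-divisors, which is exactly what \cite[Lemma 2.10, Corollary 2.13]{BdFF} supplies and what the paper is invoking. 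If you want a self-contained argument, you should prove that consistency directly (e.g.\ via the duality between $\Mov(Z/X)$ and movable curve classes on higher models) rather than via the coefficient-wise approximation, which runs into the control problem above.
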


We record the following Negativity Lemma in the context of $b$-divisors.

\begin{lem}\label{neg_lemma_b}\cite[Proposition 2.12]{BdFF}
Let $\WW$ be an $X$-nef $\RR$-Weil $b$-divisor over $X$ and $f:Y\to
X$ be a birational model. Then $\WW_Y\leq -f^*(-\WW_X)$. \qed
\end{lem}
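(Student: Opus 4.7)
The plan is to prove this directly from the natural pullback machinery, not through the maximality property of the envelope (which is in fact a consequence of this very lemma). First, I reduce to the case where $\WW$ is $\RR$-Cartier: by the definition of $X$-nef as numerical limits of $X$-nef $\RR$-Cartier $b$-divisors, combined with continuity of $D\mapsto -f^*(-D)$ in the trace of $D$ (following from Lemma \ref{double_pullback} identifying $-f^*(-D)$ with the positive part of a $\sigma$-decomposition and from Proposition \ref{sigma_decom}(4) for continuity of $P_\sigma$), it suffices to handle an $X$-nef $\RR$-Cartier $b$-divisor $\CC$ with determinant $g:Z\to X$.

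Second, after replacing $Z$ by a higher $\QQ$-factorial model, I may assume $Z$ dominates $Y$ via $h:Z\to Y$ with $g=f\circ h$. By Lemma \ref{dp}, the pullback of $f^*(-\CC_X)$ from $Y$ to $Z$ differs from $g^*(-\CC_X)$ by an effective $h$-exceptional divisor, so pushing forward by $h$ gives $h_*(-g^*(-\CC_X))=-f^*(-\CC_X)$. Since inequalities of $\RR$-divisors are preserved by pushforward, it suffices to prove $\CC_Z\leq -g^*(-\CC_X)$ on $Z$; expanding $-g^*(-\CC_X)=\lim_{m\to\infty}\frac{1}{m}(-g^\natural(-m\CC_X))$, this further reduces to $\CC_Z\leq\frac{1}{m}(-g^\natural(-m\CC_X))$ for sufficiently divisible $m$.

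Third, this Cartier inequality is established by passing to a log resolution $g'=g\circ h':Z'\to X$ on which $\OO_X(-m\CC_X)\cdot\OO_{Z'}$ becomes invertible. By Lemma \ref{anti_free_natural}(1), $-{g'}^\natural(-m\CC_X)$ is relatively globally generated over $X$, hence $g'$-nef. Together with the pullback $m\,{h'}^*\CC_Z$, which is $g'$-nef since $\CC_Z$ is $g$-nef, the two divisors form a $g'$-exceptional $b$-difference (both have trace $m\CC_X$ on $X$), and the classical Negativity Lemma \cite[Lemma 3.39]{KM} should yield the required sign after some bookkeeping with Lemma \ref{composition_natural} to compare $g^\natural$ on $Z$ against ${g'}^\natural$ on $Z'$.

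The main obstacle is in this last step: the Negativity Lemma requires both exceptionality and a unidirectional nef hypothesis, whereas here the two competing divisors are both $g'$-nef, so their difference is only a difference of nef divisors, not itself nef or antinef. The argument in \cite[Proposition 2.12]{BdFF} navigates this by a slightly indirect route, using an approximation-by-sections of $\OO_X(-m\CC_X)$ rather than a single application of the Negativity Lemma, and this is the technical heart of the proof.
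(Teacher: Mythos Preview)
The paper does not prove this lemma; it is quoted from \cite[Proposition~2.12]{BdFF} and closed immediately with \qed. So there is nothing in the paper to compare against, and I comment on your sketch on its own merits.

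Your step~3 gap is acknowledged, and your diagnosis there is correct: the difference of two relatively nef divisors is not amenable to a single application of the classical Negativity Lemma, and the section-based argument you allude to is indeed the substance of the proof in \cite{BdFF}. But there is also an unacknowledged gap in step~1. The convergence defining an $X$-nef $\RR$-Weil $b$-divisor is \emph{numerical} on each model; over $X$ itself this is vacuous since $N^1(X/X)=0$, so the approximating traces $(\CC_i)_X$ need bear no relation to $\WW_X$ as actual divisors. Yet $-f^*(-D)$ depends on $D$ through the sheaves $\OO_X(mD)$, i.e.\ on the linear equivalence class, not on the numerical class. Your appeal to Lemma~\ref{double_pullback} does not rescue this: that lemma expresses $-\phi^*D$ on a higher model as $P_\sigma(-g^*(f^*D))$, which already contains $f^*D$, so continuity in $D$ is not reduced to Proposition~\ref{sigma_decom}(4). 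Even on the left-hand side, the inequality $\WW_Y\le -f^*(-\WW_X)$ is coefficient-wise, and such inequalities are not preserved under purely numerical limits. A cleaner path bypasses the limit reduction altogether: invoke Lemma~\ref{nef_vs_mov} to get $\WW_Y\in\Mov(Y/X)$ directly for $\QQ$-factorial $Y$, and then run the section argument for $\OO_X(m\WW_X)$ to bound any movable divisor with pushforward $\WW_X$ above by $-f^*(-\WW_X)$. That is precisely what your step~3 is reaching for, and it is where the real content lies.
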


\subsection{Nef envelope of $\RR$-Weil $b$-divisors and volume}
All the definitions below are from \cite{BdFF}, where the reader can
find the details.

Let $\WW$ be an $\RR$-Weil $b$-divisor over $X$. We define the nef
envelope $\Env_\XX(\WW)$ of $\WW$ to be the largest $X$-nef
$\RR$-Weil $b$-divisor $\mathcal{Z}$ such that $\mathcal{Z}\leq
\WW$, if exists. Let $\CC$ be an $\RR$-Cartier $b$-divisor over $X$ whose center on $X$ is $0\in X$.
The self-intersection $\CC^n=\CC_Y^n$ if $Y$ is a determinant of
$\CC$. It is obvious that $\CC^n$ is independent of the choice of
$Y$. If $\WW$ is $X$-nef whose center on $X$ is $0\in X$, we define $\WW^n=\inf\{\CC^n\}$ where the
infimum is taken for all $X$-nef $\RR$-Carter $b$-divisors such that
$\CC\geq \WW$.

We copy the following properties from \cite{BdFF}.

\begin{prop}\label{self_inter} \cite[Theorem 4.14 and Proposition 4.16]{BdFF}
Let $\WW$ be an $X$-nef $\RR$-Weil $b$-divisor whose center on $X$ is $0\in X$.
    \begin{enumerate}
    \item If $\WW_1\leq \WW_2$ are two $X$-nef $\RR$-Weil
    $b$-divisors over $X$, then $\WW_1^n\leq \WW_2^n\leq 0$.
    \item $\WW^n=0$ if and only if $\WW=0$.
    \item Let $\phi:(Y,0)\to(X,0)$ be a finite map of degree $d$ between isolated
    singularities. Then
    $(\phi^*\WW)^n=d(\WW^n)$. \qed
    \end{enumerate}
\end{prop}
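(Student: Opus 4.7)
My plan is to work on $\QQ$-factorial determinants and reduce every assertion to the classical negativity of nef classes supported in the exceptional fiber over an isolated singular point, combined with the projection formula for $\RR$-Cartier classes.

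For (1), the monotonicity $\WW_1^n\leq \WW_2^n$ is formal from the definition: the collection of $X$-nef $\RR$-Cartier $b$-divisors dominating $\WW_2$ is contained in that dominating $\WW_1$, so the infimum defining $\WW_2^n$ is taken over a smaller set. For the bound $\WW_2^n\leq 0$, fix any $X$-nef $\RR$-Cartier $\CC\geq \WW_2$ with determinant $f:Y\to X$. Since the center of $\WW_2$ on $X$ is the point $0$, the support of $\CC_Y$ lies in $f^{-1}(0)$, so $\CC_Y$ is an $f$-nef $\RR$-divisor contained in the exceptional fiber over an isolated point. The standard Zariski-type negativity for such classes yields $\CC_Y^n\leq 0$, and taking infimum gives $\WW_2^n\leq 0$.

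For (2), one direction is immediate. For the converse, suppose $\WW\neq 0$ is $X$-nef. Pass to a $\QQ$-factorial model $f:Y\to X$ on which the trace $\WW_Y$ is a nonzero class; by Lemma \ref{nef_vs_mov} it lies in $\Mov(Y/X)$ and is supported in $f^{-1}(0)$. The key input is a strict form of negativity: a nonzero movable $\RR$-divisor supported on the exceptional fiber over an isolated point has strictly negative top self-intersection. Approximating $\WW$ from above by $\RR$-Cartier nef envelopes $\CC$ with $\CC_Y\to \WW_Y$ coefficient-wise, continuity of intersection numbers then produces a uniform strict upper bound $\CC^n<0$, whence $\WW^n<0$.

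For (3), the Cartier case is a projection formula computation: if $\CC$ is $X$-nef $\RR$-Cartier with determinant $f:Y\to X$, then the natural pullback $\phi^*\CC$ is defined on a suitable model over $Y$, is $Y$-nef because $\phi$ is finite, and satisfies $(\phi^*\CC)^n=d\cdot\CC^n$ by ordinary intersection theory on the determinant. Taking the infimum over $\CC\geq\WW$ gives $(\phi^*\WW)^n\leq d\cdot\WW^n$. The reverse inequality is the main obstacle: given an arbitrary $Y$-nef $\RR$-Cartier $b$-divisor $\DD\geq\phi^*\WW$ with determinant $Z\to Y$, one must produce an $X$-nef $\RR$-Cartier $\CC\geq\WW$ with $\CC^n$ bounded above by approximately $\frac{1}{d}\DD^n$. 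The natural candidate is $\frac{1}{d}\phi_*\DD$, viewed as a $b$-divisor over $X$ via the composition $Z\to Y\to X$; it dominates $\WW$ by the trace identity $\phi_*\phi^*\WW = d\WW$. The technical crux is to verify that pushforward under a finite morphism preserves $X$-nefness, which by Lemma \ref{nef_vs_mov} reduces to checking that $\phi_*$ sends $Y$-movable classes to $X$-movable classes; this is the genuinely subtle step and is where finiteness of $\phi$ and the isolated-singularity hypothesis combine.
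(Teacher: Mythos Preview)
The paper does not supply a proof of this proposition at all: it is stated with a citation to \cite[Theorem 4.14 and Proposition 4.16]{BdFF} and closed with a \qed. So there is nothing in the paper to compare against; your sketch is an attempt to reconstruct the argument from \cite{BdFF}.

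On the substance of your sketch, part (1) is fine. Parts (2) and (3) each contain a real gap.

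For (2), you assert that one can choose $X$-nef $\RR$-Cartier $b$-divisors $\CC\geq\WW$ whose traces on a fixed model $Y$ converge coefficient-wise to $\WW_Y$. Nothing in the definition of $\WW^n$ as an infimum guarantees this; the approximating $\CC$'s may be determined on higher and higher models with traces on $Y$ bounded away from $\WW_Y$. In \cite{BdFF} the vanishing criterion is obtained instead from their general theory of nef envelopes and an explicit estimate (essentially a log-concavity/Khovanskii--Teissier type inequality for the multilinear intersection form on nef $b$-divisors), not from a naive continuity argument on a single model.

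For (3), your reverse inequality is where the argument breaks down. You propose to take $\frac{1}{d}\phi_*\DD$ ``viewed as a $b$-divisor over $X$ via the composition $Z\to Y\to X$''. But $Z\to X$ has degree $d$, not $1$, so $Z$ is not a birational model of $X$ and $\DD$ does not sit in $\Div(\XX)$ that way. One has to define $\phi_*$ on $b$-divisors properly (in \cite{BdFF} this is done valuation-theoretically, dual to the map $\nu\mapsto\phi_*\nu$) and then prove two nontrivial facts: that $\phi_*$ carries $Y$-nef $b$-divisors to $X$-nef $b$-divisors, and that a projection formula $(\phi_*\DD)^n=\frac{1}{d^{\,n-1}}\DD^n$ (or the appropriate analogue) holds at the level of $b$-divisors. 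Neither follows from Lemma \ref{nef_vs_mov} alone, and the isolated-singularity hypothesis plays no role in this step---it is finiteness of $\phi$ that matters. Your closing sentence correctly flags this as ``the genuinely subtle step'', but you have not actually carried it out; what you have is a plausible outline rather than a proof.
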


For a model $f:Y\to X$, we fix canonical divisors such that
$f_*K_Y=K_X$. For every positive integer $m$, we define the
\textbf{$m$-th limiting log discrepancy divisor} as
$$A_{m,Y/X}=K_Y+E_f-\frac{1}{m}f^\natural(mK_X),$$ where $E_f$ is
the reduced exceptional divisor. We denote by $A_{m,\XX/X}$ the
corresponding $\QQ$-Weil $b$-divisor. Similarly, we define the
\textbf{log discrepancy divisor} as $$A_{Y/X}=K_Y+E-f^*K_X$$ and
$A_{\XX/X}$ to be the corresponding $\RR$-Weil $b$-divisor.

It is shown in \cite{BdFF,Z14} that if $X$ has isolated singularity,
then $\Env_\XX(A_{\XX/X})$ and $\Env_\XX(A_{m,\XX/X})$ exist. We
define
$$\vol_m(X)=-\Env_\XX(A_{m,\XX/X})^n \text{\quad and \quad}
\volbdff(X)=-\Env_\XX(A_{\XX/X})^n.$$ It is known that these volumes
are finite nonnegative numbers.

In the case that $X$ is $\QQ$-Gorenstein, it is shown in
\cite[Theorem 3.2]{Z14} that $\volbdff(X)=\vol_m(X)=-A_{Y/X}^n$ for $m$ sufficiently large and divisible,
where $Y$ is the log canonical modification of $X$. When $X$ is
non-$\QQ$-Gorenstein, \cite[Corollary 4.3]{Z14} shows that for every
$m\geq 2$, one can always pick a boundary $\Delta$ on $X$ such that
$\vol_m(X)$ is calculated on the log canonical modification of the
pair $(X,\Delta)$. We do not know whether $\volbdff(X)$ can be
calculated by intersection number on some model.

The following theorem gives a criteria for log canonical singularity.

\begin{thm}\label{vol_m=0}\cite[Corollary 4.6]{Z14}
If $\vol_m(X)=0$ for some (hence any multiple of an) integer $m\geq 1$,
then there exists a boundary $\Delta$ on $X$ such that $(X,\Delta)$
is log canonical. \qed
\end{thm}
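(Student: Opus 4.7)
My plan is to translate the vanishing of $\vol_m$ into the geometric statement that the log canonical modification of a well-chosen auxiliary pair is an isomorphism over $0$. By Proposition~\ref{self_inter}(2), $\vol_m(X)=0$ is equivalent to the $\RR$-Weil $b$-divisor $\Env_\XX(A_{m,\XX/X})$ being identically zero.

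To construct the boundary, I would fix $m$ sufficiently divisible, pick a general effective Weil divisor $D\in|{-mK_X}|$ defined near $0$, and set $\Delta:=\frac{1}{m}D$. A Bertini-type argument on a log resolution of the base ideal $\OO_X(-mK_X)\cdot\OO_{\widetilde{X}}$ arranges that $\Delta$ is a boundary, and the relation $D+mK_X\sim 0$ near $0$ makes $K_X+\Delta$ $\QQ$-Cartier and numerically trivial at $0$. By Odaka-Xu \cite{OX}, the pair $(X,\Delta)$ admits a log canonical modification $\pi:Y\to X$; by construction the class $H:=K_Y+E_\pi+\pi^{-1}_{*}\Delta$ is $\pi$-ample, $(Y,E_\pi+\pi^{-1}_{*}\Delta)$ is lc, and $(X,\Delta)$ is lc over a neighborhood of $0$ if and only if $\pi$ is an isomorphism there.

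The central step is to verify the formula $\vol_m(X)=-H^n$, which is essentially \cite[Corollary 4.3]{Z14}. The key computation is the decomposition
\[
A_{m,Y/X}=H+F,
\]
where $F\ge 0$ is a $\pi$-exceptional divisor that accounts for the gap between $-\frac{1}{m}\pi^\natural(mK_X)$, controlled by the base locus of the entire system $|{-mK_X}|$, and $\pi^{-1}_{*}\Delta$ together with its Cartier-exceptional complement, coming from the chosen general section $D$. Since $H$ is $\pi$-ample, hence movable over $X$, the $\sigma$-closure $\PP_\sigma(H)$ is an $X$-nef $b$-divisor (Lemma~\ref{sigma_is_nef}) with trace $H$ on $Y$ and is dominated by $A_{m,\XX/X}$ on $Y$; a minimality argument using the very definition of the log canonical modification then shows $\Env_\XX(A_{m,\XX/X})=\PP_\sigma(H)$, and computing self-intersection gives $\vol_m(X)=-H^n$.

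The hypothesis thus yields $H^n=0$. Because $\pi^{*}(K_X+\Delta)$ is locally trivial at $0$, $H$ is numerically $\pi$-exceptional; combined with $\pi$-ampleness, the Negativity Lemma \ref{neg_lemma_mov} forces $H\equiv 0$ once $H^n=0$, which is only possible when $\pi$ has no exceptional divisor over $0$. Since $\pi$ is birational and $X$ is normal, Zariski's Main Theorem then makes $\pi$ an isomorphism over a neighborhood of $0$, so $(X,\Delta)$ is log canonical at $0$. The main obstacle will be the decomposition $A_{m,Y/X}=H+F$: it requires a careful valuation-by-valuation comparison of $\pi^\natural(mK_X)$ with $\pi^{*}D$, most naturally carried out on a common log resolution of $Y$ and the base ideal of $|{-mK_X}|$ and then pushed forward via Proposition~\ref{sigma_decom}(3).
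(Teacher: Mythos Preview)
The paper does not prove Theorem~\ref{vol_m=0}: it is quoted from \cite[Corollary~4.6]{Z14} and closed with a \qed, so there is no in-paper argument to compare against. Your sketch is a faithful reconstruction of the proof in \cite{Z14}---choose a general $\Delta\in\frac{1}{m}|{-mK_X}|$ so that $K_X+\Delta$ is locally $\QQ$-trivial, take the log canonical modification $\pi\colon Y\to X$ of $(X,\Delta)$ via \cite{OX}, identify $\vol_m(X)=-H^n$ for $H=K_Y+E_\pi+\pi^{-1}_*\Delta$ as in \cite[Corollary~4.3]{Z14}, and deduce that $\pi$ is an isomorphism near $0$ when this vanishes---and your identification of the delicate step (the decomposition $A_{m,Y/X}=H+F$ with $F\ge 0$ exceptional, and the equality $\Env_\XX(A_{m,\XX/X})=\PP_\sigma(H)$) is accurate.

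One minor remark on the endgame: having established $\Env_\XX(A_{m,\XX/X})=\PP_\sigma(H)$ with $H$ $\pi$-ample, you can bypass the ``$H^n=0$ plus Negativity Lemma'' computation entirely. Proposition~\ref{self_inter}(2) already says that $\vol_m(X)=0$ forces the whole $b$-divisor $\Env_\XX(A_{m,\XX/X})$ to vanish; its trace on $Y$ is then numerically trivial over $X$, and $\pi$-ampleness immediately forces $\pi$ to be an isomorphism near $0$. Your route---Lemma~\ref{neg_lemma_mov} gives that the exceptional representative $A\equiv_\pi H$ satisfies $A\le 0$, and then $0=A^n=\sum a_i(A|_{E_i})^{n-1}$ with each $(A|_{E_i})^{n-1}>0$ forces $A=0$---also works, but is a little longer.
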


\subsection{Numerically $\QQ$-Cartier divisors}

The numerically $\QQ$-Cartier divisors are defined in \cite{BdFF} and are further studied in \cite{BdFFU}, which behave like $\QQ$-Cartier divisors under birational pullbacks. We give the following definition which generalizes Mumford's numerical pullback.

\begin{defn} Let $X$ be a normal variety.
\begin{enumerate}
\item A Weil divisor $D$ on $X$ is numerically Cartier if there exists
a resolution of singularities $f:Y\to X$ and an $f$-numerically
trivial Cartier divisor $D'$ on $Y$ such that $f_*D'=D$.
\item A $\QQ$-Weil divisor is numerically $\QQ$-Cartier if some
multiple is numerically Cartier.
\item When the canonical divisor $K_X$ is numerically $\QQ$-Cartier, we say $X$ is numerically $\QQ$-Gorenstein.
\end{enumerate}
\end{defn}

\begin{prop}\label{num_Car}
Let $D$ be a Weil divisor on $X$.
\begin{enumerate}
\item $D$ is numerically $\QQ$-Cartier if and only if
$\Env_X(-D)=-\Env_X(D)$.
\item Suppose $X$ has rational singularities. Then $D$ is
numerically $\QQ$-Cartier if and only if $D$ is $\QQ$-Cartier.
\item Suppose $D$ is numerically $\QQ$-Cartier. Let $f:Y\to X$ be a birational model such that $f^*D$ is $\QQ$-Cartier. Then $f^*D$ is $f$-numerically trivial and $-\Env_X(-D)=\overline{f^*D}$.
\end{enumerate}
\end{prop}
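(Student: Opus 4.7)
The plan is to prove (1) first, bootstrap to (3) using the forward direction of (1), and conclude (2) using (3) together with rational singularities. The main tools are the maximality characterization of the nef envelope (\cite[Corollary 2.13]{BdFF}), the negativity lemmas in the $b$-divisor (Lemma \ref{neg_lemma_b}) and classical (\cite[Lemma 3.39]{KM}) forms, and the observation of Lemma \ref{nef_determinant} identifying $\RR$-Cartier $X$-nef $b$-divisors with $f$-nef traces on their determinants.

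Turning to (1) forward: given a resolution $f:Y\to X$ and a Cartier divisor $D'$ on $Y$ with $D'\equiv_f 0$ and $f_*D'=mD$, the $b$-Cartier divisors $\overline{\pm D'}$ (determined on $Y$) are $X$-nef with traces $\pm mD$ on $X$, so maximality gives $\overline{\pm D'}\leq m\Env_X(\pm D)$. Summing and invoking the $b$-divisor negativity lemma on the $X$-nef sum $\Env_X(D)+\Env_X(-D)$ (trace $0$ on $X$, hence $\leq 0$) forces $\Env_X(D)=-\Env_X(-D)=(1/m)\overline{D'}$. For the converse I would pick a sufficiently high log resolution $f:Y\to X$ via \cite[Theorem 4.2]{dFH} so that $\OO_X(\pm mD)\cdot\OO_Y$ are invertible for some $m$; a direct computation using Lemma \ref{composition_natural} then shows $\Env_X(\pm D)$ are $\RR$-Cartier with determinant $Y$, and Lemma \ref{nef_determinant} ensures that the traces $-f^*(\mp D)$ are $f$-nef. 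The hypothesis $\Env_X(-D)=-\Env_X(D)$ on $Y$ then reads $-f^*D=f^*(-D)$, so setting $L:=f^*D=-f^*(-D)$ yields a $\QQ$-Cartier divisor with both $\pm L$ $f$-nef, whence $L\equiv_f 0$. Clearing denominators exhibits $D$ as numerically $\QQ$-Cartier.

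For (3), I would start from a witness $(g:Z\to X,D'')$ of numerical $\QQ$-Cartier-ness and arrange $g=f\circ h$ with $h:Z\to Y$ by common resolution. The forward direction of (1) on $Z$ identifies $\Env_X(D)=-\Env_X(-D)=\overline{D''}$, and comparing traces on $Y$ gives $h_*D''=f^*D$. The decisive observation is that $D''-h^*(f^*D)$ on $Z$ is $h$-exceptional (it pushes forward to $0$) and $h$-numerically trivial (both summands are: $D''$ by hypothesis on $g=fh$, and $h^*(f^*D)$ as a pullback along $h$), so the classical negativity lemma forces it to vanish. Thus $D''=h^*(f^*D)$, which immediately yields $\overline{f^*D}=\overline{D''}=-\Env_X(-D)$ and, by lifting $f$-contracted curves to $g$-contracted curves via $h$, $f^*D\equiv_f 0$.

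Part (2) is immediate in the easy direction. For the converse, (3) produces $f:Y\to X$ birational with $f^*D$ $\QQ$-Cartier, $f$-numerically trivial, and $f_*(f^*D)=D$. The main obstacle of the whole proposition lies here: descending the $f$-numerically trivial Cartier line bundle $mf^*D$ to a Cartier divisor on $X$. I would invoke the vanishing $R^1f_*\OO_Y=0$ from rational singularities together with the exponential sequence (or equivalently the relative base-point-free theorem applied to $mf^*D$, whose $f$-numerical triviality makes $K_Y$-perturbation unnecessary) to obtain $M$ Cartier on $X$ with $mf^*D=f^*M$; pushing forward gives $mD=M$, so $D$ is $\QQ$-Cartier.
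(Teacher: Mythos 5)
Your treatment of part (3) — the only part the paper actually proves, since (1) and (2) are quoted from \cite[Proposition 5.9, Theorem 5.11]{BdFFU} — is correct and goes by a genuinely different route. The paper stays inside the envelope formalism: it applies Lemma \ref{double_pullback} to write $-\phi^*(\pm D)=P_\sigma(-g^*(f^*(\pm D)))$, uses the identity of part (1) to collapse the two inequalities, and then invokes Lemma \ref{nef_determinant} to get that both $f^*D$ and $-f^*D$ are $f$-nef. You instead transfer a witness $(g=f\circ h,\,D'')$ of numerical $\QQ$-Cartier-ness to a common resolution and kill $D''-h^*(m f^*D)$ by the classical negativity lemma \cite[Lemma 3.39]{KM}, which is more elementary and makes the equality $-\Env_X(-D)=\overline{f^*D}$ completely explicit; your forward direction of (1) (maximality of $\Env_X(\pm mD)$ from \cite[Corollary 2.13]{BdFF} plus Lemma \ref{neg_lemma_b} applied to the $X$-nef sum with trace $0$) is also correct, and it is all that your argument for (3) needs. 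One small imprecision: (3) does not ``produce'' a model with $f^*D$ $\QQ$-Cartier — it assumes one — but your own forward-(1) computation shows the witness resolution itself is such a model, so (2)$\Rightarrow$ can be launched from there.

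The genuine gap is in your converse of (1). Choosing a log resolution with $\OO_X(\pm mD)\cdot\OO_Y$ invertible for a \emph{single} $m$ controls only the natural pullback $f^\natural(\pm mD)$: Lemma \ref{composition_natural} then says the $b$-divisor $Z(\OO_X(\mp mD))$ is Cartier-determined on $Y$, but $\Env_X(\pm D)$ has trace $-f^*(\mp D)=-\lim_k\frac{1}{k}f^\natural(\mp kD)$, a limit over \emph{all} multiples, and there is no reason this limit is $\QQ$-Cartier on $Y$ (nor does the hypothesis give a squeeze: $-\frac{1}{m}f^\natural(-mD)\leq f^*D\leq\frac{1}{m}f^\natural(mD)$, and the gap $\frac{1}{m}(f^\natural(mD)+f^\natural(-mD))$ is an effective exceptional divisor that need not vanish even when both ideals are invertible). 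So the step ``$\Env_X(\pm D)$ are $\RR$-Cartier with determinant $Y$'' is unjustified; this is precisely the nontrivial content of \cite[Proposition 5.9]{BdFFU}, which the paper cites rather than reproves. Similarly in (2), the parenthetical relative base-point-free alternative does not apply ($mf^*D-K_Y\equiv_f -K_Y$ need not be $f$-nef and big), and the exponential-sequence argument needs the extra observation that an $f$-numerically trivial line bundle has torsion class in $R^2f_*\mathbb{Z}$, so only a further multiple of $mf^*D$ descends; with that adjustment the argument is the standard proof of \cite[Theorem 5.11]{BdFFU}, but as written it is not complete.
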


\begin{proof}
(1) is \cite[Proposition 5.9]{BdFFU} and (2) is \cite[Theorem 5.11]{BdFFU}.

For (3), let $g:Z\to Y$ be any $\QQ$-factorial birational model over $Y$ and $\phi=f\circ g$. By Lemma \ref{double_pullback}, we have
$$-\phi^*D=P_\sigma(-g^*(f^*D))\leq -g^*(f^*D),$$ and $$-\phi^*(-D)=P_\sigma(-g^*(f^*(-D)))\leq -g^*(f^*(-D)).$$ Applying (1), the sum yields $$0=-\phi^*D-\phi^*(-D)\leq -g^*(f^*D)-g^*(f^*(-D))=0.$$ Hence, $\Env_X(D)=\overline{f^*D}$ and $-\Env_X(D)=\overline{-f^*D}$. By Lemma \ref{nef_determinant}, both $f^*D$ and $-f^*D$ are $f$-nef. Thus, $f^*D$ is $f$-numerically trivial.
\end{proof}

\section{Partial Resolutions}

\subsection{Movable modification of non-$\QQ$-Gorenstein varieties}

The construction of our volume depends on the following theorem
which is known to experts.

\begin{thm}\label{mov_modi}
There exists a birational morphism $f:Y\to X$ such that
\begin{enumerate}
\item $Y$ is $\QQ$-factorial,
\item $(Y,E_f)$ is dlt, and
\item $K_Y+E_f\in\Mov(Y/X)$,
\end{enumerate}
where $E_f$ is the reduced exceptional divisor.
Moreover, $A_{Y/X}\leq 0$. We call $f:Y\to X$ a movable modification of $X$.
\end{thm}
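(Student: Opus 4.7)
The plan is to take a log resolution of $X$ and run a relative minimal model program on the log smooth pair to produce the desired birational model, then invoke the Negativity Lemma to upgrade movability into the bound $A_{Y/X}\leq 0$.

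First I would choose a log resolution $g\colon W\to X$ whose exceptional locus is a divisor, so that $(W,E_g)$ is log smooth with $E_g$ the reduced exceptional divisor; in particular, $W$ is $\QQ$-factorial and $(W,E_g)$ is dlt. Next, run the $(K_W+E_g)$-MMP over $X$ with scaling of a $g$-ample $\QQ$-divisor. Since the pair is dlt rather than klt, direct BCHM does not apply; I would pass to the klt perturbation $(W,(1-\epsilon)E_g+\epsilon A)$ for a general $g$-ample $A$ and small $\epsilon>0$, run the MMP there, and take the standard limit (as in the construction of dlt minimal models) to obtain a terminating $(K_W+E_g)$-MMP. Because every model in the sequence is birational to $X$, a relative Mori fiber space output is impossible: a fiber contraction would have positive relative dimension over $X$, contradicting birationality. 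Hence the MMP terminates at a $\QQ$-factorial dlt pair $(Y,E_f)$ over $X$ with $K_Y+E_f$ being $f$-nef, where $E_f$ is the reduced exceptional divisor of $f\colon Y\to X$ (flips are isomorphisms in codimension one, and divisorial contractions only contract components of the exceptional divisor, so $E_f$ is the strict transform of $E_g$ and coincides with the full reduced exceptional divisor). In particular, $K_Y+E_f\in\Mov(Y/X)$.

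For the moreover claim, by definition $A_{Y/X}=K_Y+E_f-f^*K_X$. This is $f$-exceptional since $f_*(K_Y+E_f)=K_X=f_*(f^*K_X)$. The first summand $K_Y+E_f$ lies in $\Mov(Y/X)$ by the construction above, and Lemma \ref{anti_movable} gives $-f^*K_X\in\Mov(Y/X)$. Being a sum of classes in the convex cone $\Mov(Y/X)$, the exceptional divisor $A_{Y/X}$ is itself $f$-movable, and Lemma \ref{neg_lemma_mov} then forces $A_{Y/X}\leq 0$.

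The main obstacle is the termination of the relative MMP for the dlt pair $(W,E_g)$ over the possibly non-$\QQ$-Gorenstein base $X$: BCHM in its canonical form applies only to klt pairs with big boundary, so the standard workaround is the perturbation-and-limit argument sketched above (alternatively, one can invoke finite generation of the appropriate relative adjoint algebras to produce the output directly). The remaining verifications, namely preservation of $\QQ$-factoriality and of the dlt property under flips and divisorial contractions, together with the identification of $E_f$ as the reduced $f$-exceptional divisor throughout the MMP, are routine.
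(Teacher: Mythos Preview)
Your argument for the ``moreover'' clause is correct and matches the paper's. The gap is in the construction of $Y$: you assert that the $(K_W+E_g)$-MMP with scaling over $X$ \emph{terminates}, yielding $K_Y+E_f$ relatively nef. This is not known when $X$ is not $\QQ$-Gorenstein, and the paper explicitly flags it as conditional (see the remark immediately following Theorem~\ref{mov_modi} and Remark~4.4, where obtaining nefness is stated to require termination of MMP for dlt pairs). Your proposed workarounds do not close the gap: the klt perturbation $(W,(1-\epsilon)E_g+\epsilon A)$ gives, via BCHM, a terminating MMP for the \emph{perturbed} pair, but the ``standard limit'' you allude to produces at best a log minimal model in Birkar's sense, not a terminating sequence for $(W,E_g)$ itself; and finite generation of the relevant adjoint algebra over $X$ is exactly what is open here. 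Ruling out a Mori fibre space (correctly, via birationality) does not rule out an infinite sequence of flips.

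The paper's proof sidesteps termination entirely by aiming for the weaker conclusion $K_Y+E_f\in\Mov(Y/X)$ rather than nefness. One runs the $(K_Z+E_g)$-MMP with scaling of an ample $H$ over $X$ and uses only that the scaling parameters satisfy $\lambda_i\to 0$ (this much is known, cf.\ \cite[Lemma~2.6]{OX}). For each prime divisor $E$ in $\SBs_-(K_Z+E_g/X)$ there exists $t>0$ with $E\subset\SBs(K_Z+E_g+tH/X)$; once $\lambda_i\le t$, the divisor $K_{Z_i}+E_{g_i}+tH_i$ is semi-ample over $X$, so $E$ must already have been contracted. Since $N_\sigma(K_Z+E_g)$ has only finitely many components, after finitely many steps one reaches a model $Z_j$ on which $\SBs_-(K_{Z_j}+E_{g_j}/X)$ contains no divisor; Lemma~\ref{mov_vs_bs} then gives $K_{Z_j}+E_{g_j}\in\Mov(Z_j/X)$, and one sets $Y=Z_j$. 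The point is that movability is a strictly weaker target than nefness, and it is achievable unconditionally.
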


\begin{rmk}
Theorem \ref{mov_modi} does not require that $X$ is
$\QQ$-Gorenstein. It is proved in \cite{OX} that in the setting of
$K_X+\Delta$ being $\QQ$-Cartier, one may further require that
$K_Y+E_f+\tilde\Delta$ is $f$-semi-ample, where $\tilde\Delta$ is
the strict transform of $\Delta$ on $Y$. The general case of
semi-ampleness is true if one assume the full minimal model program
(MMP) including abundance.
\end{rmk}

\begin{proof}
Let $g:Z\to X$ be a log resolution and $E_g$ be the reduced
exceptional divisor. Let $H$ be a $g$-ample divisor on $Z$. We run
the $(K_Z+E_g)$-MMP with scaling $H$ over $X$. We obtain a sequence
of flips and divisorial contractions:
$$Z=Z_0\dashrightarrow Z_1\dashrightarrow \cdots,$$ and a decreasing
sequence $$\lambda_i=\inf\{s\in\RR | K_{Z_i}+E_{g_i}+sH_i \text{\
is nef over\ }X\}.$$ We know that for every $\lambda_i\geq t\geq
\lambda_{i+1}$, $K_{Z_i}+E_{g_i}+tH_i$ is relatively semi-ample over
$X$ and $\lim_{i\to\infty} \lambda_i=0$ (see \cite[Lemma 2.6]{OX}).

For every divisor $E\subset\SBs_-(K_Z+E_g/X)$ or equivalently every
component $E$ of $N_\sigma(K_Z+E_g)$, there is some $t>0$ such that
$E\subset\SBs(K_Z+E_g+tH/X)$. We may find an $i$ such that
$\lambda_i\geq t\geq \lambda_{i+1}$. Since $K_{Z_i}+E_{g_i}+tH_i$ is
relatively semi-ample over $X$, $E$ must be contracted on $Z_i$.
Since there are finitely many such $E$, we conclude that there is a
model $Z_j$ such that $\SBs_-(K_{Z_j}+E_{g_j}/X)$ contains no
divisor.

By Lemma \ref{mov_vs_bs}, we have $K_{Z_j}+E_{g_j}\in\Mov(Z_j/X)$.
The theorem follows by setting $Y=Z_j$.

For the moreover part, by Lemma \ref{anti_movable},
$-f^*K_X\in\Mov(Y/X)$, hence so is $K_Y+E_f-f^*K_X$. By Lemma
\ref{neg_lemma_mov}, we have $A_{Y/X}=K_Y+E_f-f^*K_X\leq 0$.
\end{proof}

\subsection{Log canonical modification of numerically $\QQ$-Gorenstein varieties}

\begin{defn} For a numerically $\QQ$-Gorenstein variety $X$, we say $X$ is numerically log canonical (numerically log terminal, resp.) if for every exceptional divisor $E$ on $\phi:Z\to X$, $\ord_E(K_Z-f^*K_X)\geq -1$ ($>-1$, resp.).
\end{defn}

\begin{rmk}
The above definition coincide with Mumford's numerically pullbacks \cite[Notation 4.1]{KM} on surfaces.
\end{rmk}

The following proposition generalizes the well-known property in the surface case \cite[Notation 4.1]{KM}\cite[Proposition 7.14]{dFH}.

\begin{prop}\label{num_lc}
If $X$ is numerically log canonical, then $X$ is $\QQ$-Gorenstein.
\end{prop}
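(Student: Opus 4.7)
My plan is to use the movable modification to produce a birational model on which the log discrepancy divisor vanishes, and then descend the resulting $\QQ$-Cartier divisor back down to $X$.

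Let $f:Y\to X$ be a movable modification, which exists by Theorem \ref{mov_modi}. Then $Y$ is $\QQ$-factorial, $(Y,E_f)$ is dlt, and $A_{Y/X}=K_Y+E_f-f^*K_X\leq 0$. Since $f_*K_Y=K_X$, $f_*E_f=0$, and $f_*(f^*K_X)=K_X$, the divisor $A_{Y/X}$ is $f$-exceptional. The numerically log canonical hypothesis gives $\ord_E(A_{Y/X})=\ord_E(K_Y-f^*K_X)+1\geq 0$ for every $f$-exceptional $E$, which combined with $A_{Y/X}\leq 0$ forces $A_{Y/X}=0$. Hence $K_Y+E_f=f^*K_X$ as $\RR$-Weil divisors. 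Since $Y$ is $\QQ$-factorial, $K_Y+E_f$ is $\QQ$-Cartier, so the BdFF pullback $f^*K_X$ is $\QQ$-Cartier; by Proposition \ref{num_Car}(3) applied to the numerically $\QQ$-Cartier divisor $K_X$, the divisor $f^*K_X=K_Y+E_f$ is $f$-numerically trivial, and in particular $f$-nef.

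The remaining step is to descend $K_Y+E_f$ to a $\QQ$-Cartier divisor on $X$. Because $(Y,E_f)$ is dlt and $K_Y+E_f\equiv_f 0$, the numerically trivial case of relative log abundance for dlt pairs produces $K_Y+E_f\sim_{\QQ,f} f^*D$ for some $\QQ$-Cartier divisor $D$ on $X$; pushing forward then gives $D=f_*(K_Y+E_f)=K_X$, so $K_X$ is $\QQ$-Cartier. The main obstacle is this descent step: the general relative log abundance conjecture is open, but the numerically trivial case for dlt pairs over a normal base is known in the literature (for instance via Gongyo's work on numerically trivial dlt pairs), and in our birational setup one may alternatively give a direct argument by running a $(K_Y+E_f+\epsilon H)$-MMP with scaling over $X$ for an $f$-ample divisor $H$ and passing to the limit as $\epsilon\to 0$, since $K_Y+E_f+\epsilon H$ is $f$-ample for every $\epsilon>0$.
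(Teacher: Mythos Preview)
Your main argument is correct and follows essentially the same route as the paper: take a movable modification $f:Y\to X$, combine the numerically log canonical hypothesis with $A_{Y/X}\leq 0$ to force $K_Y+E_f=f^*K_X$, use Proposition~\ref{num_Car}(3) to see this is $f$-numerically trivial, and then invoke abundance for dlt pairs with numerically trivial log canonical class. The paper cites precisely this result (\cite[Theorem~4.9]{FG}) for the last step. The only cosmetic difference is in how the conclusion is extracted: the paper passes to $Z=\Proj_X\bigoplus_m f_*\OO_Y(m(K_Y+E_f))$ and observes that $K_Z+E_\phi$ is simultaneously $\phi$-ample and $\phi$-numerically trivial, forcing $\phi$ to be an isomorphism; you instead descend directly by writing $K_Y+E_f\sim_{\QQ}f^*D$ and pushing forward. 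Both are immediate consequences of $f$-semi-ampleness together with $f$-numerical triviality.

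One caution: the alternative you sketch at the end, running a $(K_Y+E_f+\epsilon H)$-MMP with scaling over $X$, does not work as stated. Since $K_Y+E_f\equiv_f 0$ and $H$ is $f$-ample, the divisor $K_Y+E_f+\epsilon H$ is already $f$-ample for every $\epsilon>0$, so there is no MMP to run, and letting $\epsilon\to 0$ does not by itself produce a $\QQ$-Cartier descent of $K_X$. The abundance input is genuinely needed here and is not bypassed by this maneuver; you should simply cite it, as the paper does.
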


\begin{proof}
Let $f:Y\to X$ be a movable modification of $X$. Then $K_Y+E_f-f^*K_X\leq 0$ by Lemma \ref{neg_lemma_mov}. Since $X$ is numerically log canonical, there must be $K_Y+E_f=f^*K_X$. Thus, $(Y,E_f)$ is a dlt pair such that $K_Y+E_f$ is $f$-numerically trivial. By the abundance theorem \cite[Theorem 4.9]{FG}, $K_Y+E_f$ is $f$-semi-ample. Let $\phi:Z=\Proj_X\bigoplus_{m}f_*\OO_Y(m(K_Y+E_f))\to X$ be the log canonical modification over $X$. Then $K_Z+E_\phi$ is $\phi$-ample. On the other hand, $K_Z+E_\phi=\phi^*K_X$ is $\phi$-numerically trivial by Proposition \ref{num_Car}(3). We conclude that $\phi$ is an isomorphism. In particular, $X$ is $\QQ$-Gorenstein.
\end{proof}

The existence of log canonical modifications is predicted by full relative minimal model program and is proved for a pair $(X,\Delta=\sum a_i\Delta_i)$ such that $K_X+\Delta$ is $\QQ$-Cartier and $a_i\in[0,1]$ \cite{OX}. The same idea applies in the case that $K_X+\Delta$ is numerically $\QQ$-Cartier. For the reader's convenience, we include a sketch below for the case $\Delta=0$.

\begin{thm}\label{lc_modi} Let $X$ be a numerically $\QQ$-Gorenstein normal variety. Then there exists a unique birational model $f:Y\to X$, such that
\begin{enumerate}
\item $K_Y+E_f$ is a $f$-ample $\QQ$-Cartier divisor, and
\item $(Y,E_f)$ is log canonical,
\end{enumerate}
where $E_f$ is the reduced exceptional divisor.
\end{thm}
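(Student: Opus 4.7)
The plan is to adapt the construction of the log canonical modification from \cite{OX} to our numerically $\QQ$-Gorenstein setting, the key point being that Proposition \ref{num_Car}(3) gives us just enough control on $f^{*}K_{X}$ to replace the literal $\QQ$-Cartier hypothesis. First, pass to a sufficiently high log resolution $g : Z \to X$ on which $g^{*}K_{X}$ is $\QQ$-Cartier; such a model exists because $X$ is numerically $\QQ$-Gorenstein (so on some resolution the numerical pullback of a multiple of $K_{X}$ is represented by a Cartier divisor). By Proposition \ref{num_Car}(3), $g^{*}K_{X}$ is then $g$-numerically trivial, so
$$K_{Z} + E_{g} \equiv_{X} K_{Z} + E_{g} - g^{*}K_{X} = A_{Z/X},$$
where the right hand side is a $\QQ$-Cartier divisor supported on $g$-exceptional components. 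Thus running the relative $(K_{Z}+E_{g})$-MMP over $X$ is, numerically over $X$, an MMP on an exceptional $\QQ$-Cartier divisor, which is the situation handled in \cite{OX}.

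Next, run that MMP with scaling of a $g$-ample divisor $H$; each step exists by \cite{BCHM} since $(Z,E_{g})$ is dlt. The movable-modification construction from Theorem \ref{mov_modi} first produces a model on which $K+E$ is relatively movable, after which termination to a relatively nef model proceeds exactly as in \cite{OX}: their argument uses only the finiteness of log discrepancies appearing on a log resolution together with the exceptional support of the divisor driving the MMP. Both inputs remain available here because, at every step, the strict transform of $g^{*}K_{X}$ stays $\QQ$-Cartier and numerically trivial over $X$ (these properties are preserved under flips and divisorial contractions of divisors that are numerically trivial over the base), so the reduction to an exceptional-divisor MMP persists throughout. The output is a birational model $f : Y \to X$ with $Y$ $\QQ$-factorial, $(Y,E_{f})$ dlt, and $K_{Y}+E_{f}$ both $\QQ$-Cartier and $f$-nef.

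Finally, apply the relative abundance theorem for dlt pairs \cite[Theorem 4.9]{FG} (as already used in the proof of Proposition \ref{num_lc}) to conclude that $K_{Y}+E_{f}$ is $f$-semi-ample, and take the relative Proj
$$Y' := \Proj_{X} \bigoplus_{m \geq 0} f_{*}\OO_{Y}\bigl(m(K_{Y}+E_{f})\bigr).$$
The induced morphism $Y' \to X$ satisfies (1), and $(Y', E_{Y'})$ is log canonical because it is obtained from the dlt pair $(Y,E_{f})$ by contracting $(K_{Y}+E_{f})$-trivial curves. Uniqueness follows from a standard negativity-lemma argument on a common resolution of any two models satisfying (1) and (2). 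The main technical obstacle is the termination of the MMP in the second paragraph; this is where the numerical $\QQ$-Gorenstein hypothesis enters essentially, via Proposition \ref{num_Car}(3), to reduce the MMP to one on an exceptional divisor and thereby bring the Odaka-Xu termination argument into force.
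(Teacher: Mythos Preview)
Your proposal has a genuine gap in the second and third paragraphs: neither the termination of the MMP nor the subsequent abundance step is justified as stated.

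First, the appeal to \cite{OX} for termination is not accurate. Odaka--Xu do not prove termination of a $(K_{Z}+E_{g})$-MMP by an argument about ``finiteness of log discrepancies and exceptional support''; rather, they establish finite generation of the relative log canonical ring by invoking the Hacon--Xu theorem on log canonical closures \cite{HX}, and then take $\Proj$. Termination of MMP with scaling for dlt pairs is not known in general, and the fact that the MMP is numerically an MMP on an exceptional divisor does not by itself force termination. So your second paragraph does not produce a model with $K_{Y}+E_{f}$ relatively nef.

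Second, even if you had such a model, the abundance citation is wrong: \cite[Theorem 4.9]{FG}, as used in Proposition \ref{num_lc}, yields semi-ampleness only when $K_{Y}+E_{f}$ is $f$-\emph{numerically trivial}, not merely $f$-nef. Passing from nef to semi-ample in the relative dlt setting is the abundance conjecture, which is open.

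The paper's proof circumvents both problems simultaneously. Starting from a movable modification $\phi:Z\to X$, it defines the locus $X_{lc}$ where $A_{Z/X}$ is $\geq 0$, uses Proposition \ref{num_lc} to see that over $X_{lc}$ the variety is honestly $\QQ$-Gorenstein and log canonical (so a good minimal model exists there by \cite[Lemma 2.11]{HX}), and then verifies via the Connectedness Lemma that no lc centre of an auxiliary boundary $B=E_{\phi}-\epsilon A_{Z/X}$ maps into $X\setminus X_{lc}$. This is precisely the input needed for \cite[Theorem 1.1]{HX}, which then gives a good minimal model of $(Z,B)$ over all of $X$; since $K_{Z}+B\equiv_{X}(1+\epsilon)(K_{Z}+E_{\phi})$, one obtains finite generation of $\bigoplus_{m}\phi_{*}\OO_{Z}(m(K_{Z}+E_{\phi}))$ directly, and $Y$ is its $\Proj$. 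The point is that Hacon--Xu delivers semi-ampleness as part of the package, so no separate termination or abundance argument is needed.
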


\begin{proof}
\emph{Step 1.} By Theorem \ref{mov_modi}, there exists a movable modification $\phi:Z\to X$ such that $Z$ is $\QQ$-factorial, $K_Z+E_\phi\in\Mov(Z/X)$ and $(Z,E_\phi)$ is dlt. By Lemma \ref{mov_vs_bs}, we know that $\SBs_-(K_Z+E_\phi/X)$ contains no divisor. We denote the complement of the support of $\phi(-\lfloor A_{Z/X}\rfloor)$ by $X_{lc}$.

\emph{Step 2.} We show that if $E$ is an exceptional divisor on $Z$ such that $\phi(E)\subseteq X\backslash X_{lc}$, then $\ord_E(A_{Z/X})<0$. If this is not true, by Koll\'ar-Shokurov's Connectedness Lemma \cite[Corollary 5.49]{KM}, there exist exceptional divisors $E_0$ and $E_1$ with centers in $X\backslash X_{lc}$ such that $\ord_{E_0}(A_{Z/X})=0$, $\ord_{E_1}(A_{Z/X})<0$ and $E_0\cap E_1\neq \emptyset$. Then $(A_{Z/X}+\epsilon H)|_{E_0}$ is not effective for $0<\epsilon \ll 1$ and $H$ ample. Since $\phi^*X$ is numerically trivial by Proposition \ref{num_Car}(3), we have $E_0\subseteq\SBs_-(K_Z+E_\phi/X)$, a contradiction.

\emph{Step 3.} Set $B=E_\phi-\epsilon A_{Z/X}$ for some small $\epsilon$ such that $B\geq 0$. We show that $(Z,B)$ has a good minimal model over $X$. The idea is to apply \cite[Theorem 1.1]{HX} over the open subset $X_{lc}$. A good minimal model exists over $X_{lc}$ by Proposition \ref{num_lc} and \cite[Lemma 2.11]{HX}. One can check that no strata of $\lfloor B\rfloor$ are contained in $\phi^{-1}(X\backslash X_{lc})$ using Step 2.

\emph{Step 4.} Since $K_Z+B= (1+\epsilon)(K_Z+E_\phi)-\epsilon\phi^*K_X\equiv_f (1+\epsilon)(K_Z+E_\phi),$ a good minimal model of $K_Z+B$ is also a good minimal model of $K_Z+E_\phi$. Hence the canonical ring $R=\bigoplus_m \phi_*\OO_Z(m(K_Z+E_\phi))$ is finitely generated over $X$ and we may take $Y=\Proj_X R$.
\end{proof}

\section{Movable Volume}

We start with a lemma showing the behavior of Zariski decomposition
in the sense of \cite{BdFF} on a movable modification.

\begin{lem}\label{trace_on_mov}
Let $X$ be a normal variety which is not necessarily
$\QQ$-Gorenstein. Let $f:Y\to X$ be a movable modification. Then the
trace of $\Env_\XX(A_{\XX/X})$ on $Y$ is $A_{Y/X}$.
\end{lem}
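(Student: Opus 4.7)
The plan is to split the asserted equality into the upper and lower inequalities.

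The upper bound $\Env_\XX(A_{\XX/X})_Y \le A_{Y/X}$ is immediate: $\Env_\XX(A_{\XX/X}) \le A_{\XX/X}$ by definition of the envelope, and the trace of $A_{\XX/X}$ on $Y$ is exactly $A_{Y/X}$. (Alternatively, one can apply Lemma \ref{nef_vs_mov} to $\Env_\XX(A_{\XX/X})$, which is $X$-nef, to see that its trace on $Y$ lies in $\Mov(Y/X)$, and then Proposition \ref{sigma_decom}(2) gives $\le P_\sigma(A_{Y/X}) = A_{Y/X}$ once we know $A_{Y/X}$ is movable.)

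For the lower bound, the plan is to exhibit an $X$-nef $\RR$-Weil $b$-divisor $\mathcal{Z} \le A_{\XX/X}$ with $\mathcal{Z}_Y = A_{Y/X}$; the maximality of the envelope then forces $\Env_\XX(A_{\XX/X})_Y \ge A_{Y/X}$. The natural candidate is the $\sigma$-closure $\mathcal{Z} := \PP_\sigma(A_{Y/X})$. First I would verify that $A_{Y/X}$ itself lies in $\Mov(Y/X)$: writing
\[
A_{Y/X} = (K_Y + E_f) + (-f^*K_X),
\]
the first summand is movable by the definition of a movable modification, and the second by Lemma \ref{anti_movable}. Consequently $\PP_\sigma(A_{Y/X})$ is $X$-nef by Lemma \ref{sigma_is_nef}, with trace $P_\sigma(A_{Y/X}) = A_{Y/X}$ on $Y$.

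The main obstacle is checking $\PP_\sigma(A_{Y/X}) \le A_{\XX/X}$ model by model. For an arbitrary $\QQ$-factorial $g\colon Z \to X$ I take a common resolution $W$ dominating both $Y$ and $Z$ via $s\colon W \to Y$ and $t\colon W \to Z$, so that $\PP_\sigma(A_{Y/X})_Z = t_* P_\sigma(s^*A_{Y/X})$. A direct pushforward calculation using $t_* K_W = K_Z$, $t_* E_{sf} \le E_g$ (each non-$t$-exceptional component of $E_{sf}$ is $g$-exceptional), and $t_*((sf)^*K_X) = g^*K_X$ from Lemma \ref{dp}, yields $t_* A_{W/X} \le A_{\XX/X, Z}$. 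It therefore suffices to prove the pointwise inequality $P_\sigma(s^*A_{Y/X}) \le A_{W/X}$ on $W$.

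For this last step, the two main tools are the dlt property of $(Y, E_f)$, which yields $K_W + E_{sf} - s^*(K_Y+E_f) \ge 0$ (since every discrepancy $a(E;Y,E_f) \ge -1$), and Lemma \ref{double_pullback}, which identifies $-(sf)^*K_X = P_\sigma(-s^*(f^*K_X))$; in particular the $s$-exceptional excess $D_2 := (sf)^*K_X - s^*(f^*K_X) \ge 0$ is precisely $N_\sigma(-s^*(f^*K_X))$. Decomposing
\[
s^*A_{Y/X} = M + G, \qquad M := P_\sigma(s^*(K_Y+E_f)) - (sf)^*K_X,
\]
with $M \in \Mov(W/X)$ (a sum of two movable divisors) and $G \ge 0$ effective $s$-exceptional, a direct comparison gives $M \le A_{W/X}$. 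Combined with the subadditivity estimate $N_\sigma(s^*A_{Y/X}) \le N_\sigma(s^*(K_Y+E_f)) + N_\sigma(-s^*(f^*K_X)) = G$ and the relative negativity lemma (Lemma \ref{neg_lemma_mov}) applied to the $s$-exceptional difference $A_{W/X} - P_\sigma(s^*A_{Y/X})$, the required inequality follows. The most delicate part of the argument is the coefficient-by-coefficient bookkeeping on $s$-exceptional primes, where the dlt discrepancy contribution and the excess $D_2$ must combine correctly to absorb $G$ into the negative $\sigma$-part.
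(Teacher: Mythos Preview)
Your overall strategy—bounding the trace from above trivially and from below by exhibiting an $X$-nef $b$-divisor $\mathcal{Z}\le A_{\XX/X}$ with $\mathcal{Z}_Y=A_{Y/X}$—is exactly the paper's. The difference is in the choice of $\mathcal{Z}$, and your choice creates a real gap at the step you yourself flag as ``most delicate''.

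The paper does \emph{not} take $\mathcal{Z}=\PP_\sigma(A_{Y/X})$. It separates the two summands and sets
\[
\PP_-=\PP_\sigma(K_Y+E_f)+\Env_X(-K_X).
\]
On a model $g\colon Z\to Y$ (with $\phi=f\circ g$) this has trace $P_\sigma(g^*(K_Y+E_f))-\phi^*K_X$, and the bound $(\PP_-)_Z\le A_{Z/X}$ is then a two-line check: $P_\sigma(g^*(K_Y+E_f))\le g^*(K_Y+E_f)\le K_Z+E_\phi$ by the dlt hypothesis, while the second summand is already exactly $-\phi^*K_X$. No coefficient bookkeeping is needed.

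Your candidate $\PP_\sigma(A_{Y/X})$ satisfies $\PP_-\le\PP_\sigma(A_{Y/X})$ (superadditivity of $P_\sigma$), so it is a priori \emph{harder} to fit under $A_{\XX/X}$, and your proposed tools do not close the gap. The subadditivity estimate $N_\sigma(s^*A_{Y/X})\le G$ yields $P_\sigma(s^*A_{Y/X})\ge M$, which is the wrong direction for bounding $P_\sigma$ above by $A_{W/X}$. And Lemma~\ref{neg_lemma_mov} cannot be applied to the $s$-exceptional difference $A_{W/X}-P_\sigma(s^*A_{Y/X})$ (or its negative): the lemma requires the divisor to lie in $\Mov(W/Y)$, but $A_{W/X}=K_W+E_{sf}-(sf)^*K_X$ has no reason to be $Y$-movable or $Y$-antimovable, since $K_W+E_{sf}$ is uncontrolled on a general resolution $W$. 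So the negativity step is not justified, and it is not even clear that $\PP_\sigma(A_{Y/X})\le A_{\XX/X}$ holds. The fix is simply to replace $\PP_\sigma(A_{Y/X})$ by $\PP_-$; the rest of your outline then goes through verbatim and coincides with the paper's argument.
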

\begin{proof}
Consider the $\RR$-Weil $b$-divisor
$\PP_{-}=\PP_\sigma(K_Y+E_f)+\Env_X(-K_X)$. Let $g:Z\to Y$ be a
projective birational morphism and $\phi=g\circ f$. Since $(Y,E_f)$
is dlt, we have $g^*(K_Y+E_f)\leq K_Z+E_\phi$. Hence,
$$(\PP_-)_Z = \PP_\sigma(g^*(K_Y+E_f))-\phi^*K_X \leq g^*(K_Y+E_f)-\phi^*K_X
\leq  A_{Z/X}.$$ Thus, $\PP_-$ is an $X$-nef $\RR$-Weil $b$-divisor
that is less than or equal to $A_{\XX/X}$. By the definition of nef
envelope, we have $\PP_-\leq \Env_\XX(A_{\XX/X})$. Taking the trace
on $Y$, we obtain $$A_{Y/X}=(\PP_-)_Y\leq
(\Env_\XX(A_{\XX/X}))_Y\leq (A_{\XX/X})_Y= A_{Y/X}.$$ The lemma
follows.
\end{proof}

Inspired by the lemma above, we give the following definition:

\begin{defn}
Let $X$ be a normal variety and $f:Y\to X$ be a movable modification. The diminished positive part $\PP_-$ is an $\RR$-Weil $b$-divisor over $X$: $$\PP_-=\PP_\sigma(K_Y+E_f)+\Env_X(-K_X).$$
\end{defn}

\begin{lem}\label{well_def}
Let $f:Y\to X$ be a movable modification, $g:Z\to Y$ be a
$\QQ$-factorial birational model and $\phi=f\circ g$. Then the
trace of $\PP_-$ on $Z$ is $P_\sigma(K_Z+E_\phi)-\phi^*K_X$. In
particular, $\PP_-$ is independent of the choice of $Y$.
\end{lem}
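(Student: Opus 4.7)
The plan is to compute the trace of $\PP_-$ on $Z$ directly from the definition and identify it with $P_\sigma(K_Z+E_\phi)-\phi^*K_X$. Since $\PP_-=\PP_\sigma(K_Y+E_f)+\Env_X(-K_X)$, its trace on $Z$ splits as $(\PP_\sigma(K_Y+E_f))_Z+(\Env_X(-K_X))_Z$. The second summand equals $-\phi^*K_X$ by the very definition of $\Env_X$. For the first, I would apply the definition of $\sigma$-closure to the morphism $g:Z\to Y$ (taking $W=Z$, $s=g$, $t=\mathrm{id}$ in the definition), which gives $(\PP_\sigma(K_Y+E_f))_Z=P_\sigma(g^*(K_Y+E_f))$. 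Thus the entire lemma reduces to the identity $P_\sigma(g^*(K_Y+E_f))=P_\sigma(K_Z+E_\phi)$.

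The next step is to show $g^*(K_Y+E_f)\in\Mov(Z/X)$. Since $K_Y+E_f$ is in the closure of the cone of $f$-mobile Cartier divisors and $g^*$ is linear and continuous on $N^1$, this reduces to verifying that pullback by $g$ sends $f$-mobile Cartier divisors to $\phi$-mobile ones, which follows from the identification $\phi_*\OO_Z(g^*D)=f_*\OO_Y(D)$ and the resulting factorization of the canonical map $\phi^*\phi_*\OO_Z(g^*D)\otimes\OO_Z(-g^*D)\to\OO_Z$ through $g^*$ of the analogous map on $Y$. Once this is established, Proposition \ref{sigma_decom}(1) yields $P_\sigma(g^*(K_Y+E_f))=g^*(K_Y+E_f)$, and it remains to prove $P_\sigma(K_Z+E_\phi)=g^*(K_Y+E_f)$.

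Here I would use the dlt hypothesis on $(Y,E_f)$ to write $K_Z+E_\phi=g^*(K_Y+E_f)+F$ with $F\geq 0$ and $g$-exceptional (the coefficients of $F$ being $1$ plus discrepancies $\geq -1$). By Proposition \ref{sigma_decom}(2), since $g^*(K_Y+E_f)\in\Mov(Z/X)$ lies below $K_Z+E_\phi$, I obtain $P_\sigma(K_Z+E_\phi)\geq g^*(K_Y+E_f)$. Setting $M:=P_\sigma(K_Z+E_\phi)-g^*(K_Y+E_f)$, one has $0\leq M\leq F$, so $M$ is effective and $g$-exceptional; and since $g^*(K_Y+E_f)\equiv_Y 0$, the class $M\equiv_Y P_\sigma(K_Z+E_\phi)$ lies in $\Mov(Z/X)\subseteq\Mov(Z/Y)$ by Lemma \ref{mov_factor}. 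The Negativity Lemma \ref{neg_lemma_mov} then forces $M\leq 0$, whence $M=0$. This is the step I expect to be the main obstacle: one must simultaneously keep track of $M$ as effective and $g$-exceptional, and of its class as movable over $Y$, in order to invoke negativity.

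Finally, for the independence assertion, given two movable modifications $f_i:Y_i\to X$, I would pick a $\QQ$-factorial model $Z$ dominating both over $X$. The formula established above shows that each of the two $\PP_-^{(i)}$ has trace $P_\sigma(K_Z+E_\phi)-\phi^*K_X$ on $Z$, so they agree there. Because every model $W$ over $X$ admits a common $\QQ$-factorial higher model with both $Y_i$, the pushforward compatibility of Weil $b$-divisors propagates this equality to every model, giving $\PP_-^{(1)}=\PP_-^{(2)}$.
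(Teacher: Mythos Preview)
Your reduction to the identity $P_\sigma(g^*(K_Y+E_f))=P_\sigma(K_Z+E_\phi)$, and your handling of the independence statement at the end, match the paper. The problem is the second paragraph, where you assert $g^*(K_Y+E_f)\in\Mov(Z/X)$.

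The justification you give does not work. You correctly observe that $\phi_*\OO_Z(g^*D)=f_*\OO_Y(D)$ and that the evaluation map for $g^*D$ is $g^*$ of the one for $D$; but this only tells you that the base ideal of $g^*D$ over $X$ is the inverse image ideal of the base ideal of $D$, so that $\Bs(g^*D/X)=g^{-1}\bigl(\Bs(D/X)\bigr)$. The preimage under a birational morphism of a locus of codimension $\geq 2$ can very well contain a divisor, so $f$-mobility of $D$ does not imply $\phi$-mobility of $g^*D$. Nor is the underlying inclusion $g^*\bigl(\Mov(Y/X)\bigr)\subseteq\Mov(Z/X)$ available by other means: on a movable modification $K_Y+E_f$ is only required to be $f$-movable, not $f$-nef (cf.\ the remark immediately after this lemma), and already for the Atiyah flop one checks that if $D$ is movable with $D\cdot C<0$ for the flopping curve $C$, then on $Z=\mathrm{Bl}_C Y$ with exceptional $E$ one has $\sigma_E(g^*D)\geq -D\cdot C>0$, so $g^*D\notin\Mov(Z)$.

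The paper never claims that $g^*(K_Y+E_f)$ is movable. It uses instead Proposition~\ref{sigma_decom}(3): since $K_Y+E_f\in\Mov(Y/X)$ one gets $g_*P_\sigma\bigl(g^*(K_Y+E_f)\bigr)=K_Y+E_f$, so $N_\sigma\bigl(g^*(K_Y+E_f)\bigr)$ is $g$-exceptional (possibly nonzero). Combined with the dlt inequality $K_Z+E_\phi\geq g^*(K_Y+E_f)$, this yields that $K_Z+E_\phi-P_\sigma\bigl(g^*(K_Y+E_f)\bigr)$ is $g$-exceptional, and then the sandwich $P_\sigma\bigl(g^*(K_Y+E_f)\bigr)\leq P_\sigma(K_Z+E_\phi)\leq K_Z+E_\phi$ forces $P_\sigma(K_Z+E_\phi)-g^*(K_Y+E_f)$ to be $g$-exceptional as well. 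From here your own steps go through: this difference lies in $\Mov(Z/Y)$ by Lemma~\ref{mov_factor} (as you argue), so Lemma~\ref{neg_lemma_mov} gives $P_\sigma(K_Z+E_\phi)\leq g^*(K_Y+E_f)$, and Proposition~\ref{sigma_decom}(2) then yields $P_\sigma(K_Z+E_\phi)\leq P_\sigma\bigl(g^*(K_Y+E_f)\bigr)$, hence equality. In short, your argument becomes correct once you replace the unjustified equality $P_\sigma\bigl(g^*(K_Y+E_f)\bigr)=g^*(K_Y+E_f)$ by the weaker fact that their difference is $g$-exceptional.
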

\begin{proof}
We only need to show that
$$P_\sigma(K_Z+E_\phi)=P_\sigma(g^*(K_Y+E_f)).$$ Since $(Y,E_f)$ is
dlt, we have $K_Z+E_\phi\geq g^*(K_Y+E_f).$ Hence,
$$P_\sigma(K_Z+E_\phi)\geq P_\sigma(g^*(K_Y+E_f)).$$ Now we have
$$P_\sigma(g^*(K_Y+E_f))\leq P_\sigma(K_Z+E_\phi)\leq K_Z+E_\phi.$$
As we know that $K_Y+E_f\in\Mov(Y/X)$, by Proposition
\ref{sigma_decom}(3), $g^*(K_Y+E_f)-P_\sigma(g^*(K_Y+E_f))$ is
$g$-exceptional, and so is $K_Z+E_\phi-P_\sigma(g^*(K_Y+E_f))$. We
obtain that $P_\sigma(K_Z+E_\phi)-g^*(K_Y+E_f)$ must be
$g$-exceptional. Notice that
$P_\sigma(K_Z+E_\phi)-g^*(K_Y+E_f)\in\Mov(Z/Y)$. By Lemma
\ref{neg_lemma_mov}, we have $P_\sigma(K_Z+E_\phi)\leq
g^*(K_Y+E_f)$. Proposition \ref{sigma_decom}(2) gives the desired
inequality.
\end{proof}

\begin{rmk}
If we assume the termination of MMP for dlt pairs, then there exists
a movable modification with $K_Y+E_f$ nef over $X$, which is called
a dlt modification \cite{OX}. In this case, it is not hard to see
that $\PP_\sigma(K_Y+E_f)$ is a $\QQ$-Cartier $b$-divisor determined
by $f$.
\end{rmk}

\begin{lem}\label{gen_trace}
Let $\phi:Z\to X$ be a $\QQ$-factorial birational model over $X$ not
necessarily factoring through a movable modification. Then
$(\PP_-)_Z\leq P_\sigma(K_Z+E_\phi)-\phi^*K_X$.
\end{lem}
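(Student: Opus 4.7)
The plan is to reduce to Lemma \ref{well_def} by working on a $\QQ$-factorial common resolution. Pick a movable modification $f\colon Y\to X$ from Theorem \ref{mov_modi}, and choose a $\QQ$-factorial model $W$ dominating both $Y$ and $Z$ via $t\colon W\to Y$ and $s\colon W\to Z$; set $\psi=f\circ t=\phi\circ s$. Since $W$ factors through $Y$, Lemma \ref{well_def} gives $(\PP_-)_W = P_\sigma(K_W+E_\psi)-\psi^*K_X$, and compatibility of $b$-divisor traces with pushforward yields
$$(\PP_-)_Z = s_*(\PP_-)_W = s_*P_\sigma(K_W+E_\psi) - s_*\psi^*K_X.$$
Because $\Env_X(-K_X)$ is a Weil $b$-divisor with trace $-\psi^*K_X$ on $W$ and $-\phi^*K_X$ on $Z$, we have $s_*\psi^*K_X=\phi^*K_X$. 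The lemma therefore reduces to the key inequality
$$s_*P_\sigma(K_W+E_\psi)\leq P_\sigma(K_Z+E_\phi).$$

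To prove this I would combine two facts. First, $P_\sigma(K_W+E_\psi)\leq K_W+E_\psi$ by definition of the $\sigma$-decomposition, so
$$s_*P_\sigma(K_W+E_\psi)\leq s_*(K_W+E_\psi)=K_Z+E_\phi,$$
using $s_*K_W=K_Z$ together with the elementary identification $s_*E_\psi=E_\phi$ (each non-$s$-exceptional component of $E_\psi$ is the strict transform of a $\phi$-exceptional prime on $Z$, and conversely). Second, $D:=P_\sigma(K_W+E_\psi)\in\Mov(W/X)$ by Proposition \ref{sigma_decom}(1), and I claim $s_*D$ remains movable over $X$: for every prime divisor $\Gamma'\subset Z$ with strict transform $\Gamma\subset W$, any effective $\Delta\equiv_X D$ on $W$ pushes forward to an effective $s_*\Delta\equiv_X s_*D$ with $\mult_{\Gamma'}(s_*\Delta)=\mult_\Gamma(\Delta)$; taking infima yields $\sigma_{\Gamma'}(s_*D)\leq\sigma_\Gamma(D)=0$ for every prime $\Gamma'\subset Z$, forcing $s_*D\in\Mov(Z/X)$. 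Combining these two facts with Proposition \ref{sigma_decom}(2), which identifies $P_\sigma(K_Z+E_\phi)$ as the largest movable divisor dominated by $K_Z+E_\phi$, establishes the key inequality and concludes the proof.

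The main obstacle is the movability step for $s_*D$: the movable cone is defined via base loci, which do not transparently interact with pushforwards of Weil divisors, so the argument routes instead through Nakayama's infimum $\sigma_\Gamma$. Preservation of numerical equivalence under $s_*$ used in that step follows from the projection formula together with the fact that for any curve $C'\subset Z$ contracted by $\phi$, every irreducible component of the $1$-cycle $s^*C'$ is $\psi$-contracted.
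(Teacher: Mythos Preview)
Your argument is correct and follows the same route as the paper: pass to a common $\QQ$-factorial model $W$ dominating a movable modification $Y$ and $Z$, bound the pushforward of the positive part above by $K_Z+E_\phi$, check that this pushforward is movable over $X$, and conclude via Proposition~\ref{sigma_decom}(2). The paper works directly with $P_\sigma(t^*(K_Y+E_f))$ rather than first invoking Lemma~\ref{well_def} to rewrite it as $P_\sigma(K_W+E_\psi)$, but these are the same divisor.

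One remark on the movability step, which the paper simply asserts. The cleanest justification within the paper's framework is that $\PP_\sigma(K_Y+E_f)$ is $X$-nef by Lemma~\ref{sigma_is_nef}, so by Lemma~\ref{nef_vs_mov} its trace on every $\QQ$-factorial model, in particular on $Z$, lies in $\Mov(\cdot/X)$. Your direct argument through $\sigma_\Gamma$ reaches the right conclusion, but the last paragraph is imprecise: ``the $1$-cycle $s^*C'$'' is not defined for a curve $C'\subset Z$, and the projection formula does not directly compute $(s_*\Delta)\cdot C'$ in terms of $\Delta$ intersected with something upstairs. If you want to salvage that route, lift $C'$ to a curve $C\subset W$ with $s_*C=mC'$, write $\Delta - D = s^*s_*(\Delta - D) + F$ with $F$ $s$-exceptional, and use the Negativity Lemma to see that $\Delta\equiv_X D$ forces $F=0$, whence $s_*\Delta\equiv_X s_*D$.
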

\begin{proof}
Let $f:Y\to X$ be a movable modification and $W$ be a common
resolution of $Y$ and $Z$ as below:
$$\xymatrix{
& W\ar_{s}[ld]\ar^{t}[rd] & \\
Y & & Z }$$ We only need to show that $t_*P_\sigma(s^*(K_Y+E_f))\leq
P_\sigma(K_Z+E_\phi)$. It is clear that
$$t_*P_\sigma(s^*(K_Y+E_f))\leq t_*(s^*(K_Y+E_f))\leq
t_*(K_W+E_{f\circ s})=K_Z+E_\phi$$ and
$t_*P_\sigma(s^*(K_Y+E_f))\in\Mov(Z/X)$. The lemma now follows from
Proposition \ref{sigma_decom}(2).
\end{proof}

\begin{defn}\label{defn_volmov}
Suppose that $(X,0)$ has only isolated singularity at $0\in X$. We
define the \textbf{movable volume} $\volmov(X)=-(\PP_-)^n$.
\end{defn}

\begin{lem}\label{vol_pos}
We have the following inequality between the volumes:
$$0\leq \volbdff(X)\leq \volmov(X)<+\infty.$$
\end{lem}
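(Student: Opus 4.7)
The plan is to reduce everything to Proposition \ref{self_inter}, which already contains the intersection-theoretic content we need. First I verify that $\PP_-=\PP_\sigma(K_Y+E_f)+\Env_X(-K_X)$ is an $X$-nef $\RR$-Weil $b$-divisor whose center on $X$ is $\{0\}$. The $X$-nefness is mere additivity: $\PP_\sigma(K_Y+E_f)$ is $X$-nef by Lemma \ref{sigma_is_nef}, and $\Env_X(-K_X)$ is $X$-nef by its characterization as the largest $X$-nef $\RR$-Weil $b$-divisor with prescribed trace on $X$. For the center, the singularity is isolated, so $K_Z+E_\phi=\phi^*K_X$ off the exceptional locus for every model $\phi:Z\to X$; combined with Lemma \ref{well_def}, the trace $(\PP_-)_Z=P_\sigma(K_Z+E_\phi)-\phi^*K_X$ is supported in $\phi^{-1}(0)$. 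Proposition \ref{self_inter} then applies to $\PP_-$ and, in particular, forces $(\PP_-)^n$ to be a real number with $(\PP_-)^n\le 0$, which already yields $0\le\volmov(X)<+\infty$.

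Second, I extract the comparison $\PP_-\le\Env_\XX(A_{\XX/X})$ that is essentially buried in the proof of Lemma \ref{trace_on_mov}. On any $\QQ$-factorial $g:Z\to Y$ with $\phi=f\circ g$, the dlt condition on $(Y,E_f)$ gives $g^*(K_Y+E_f)\le K_Z+E_\phi$, whence
\[ (\PP_-)_Z=P_\sigma(g^*(K_Y+E_f))-\phi^*K_X\le K_Z+E_\phi-\phi^*K_X=A_{Z/X}. \]
Because $\PP_-$ is $X$-nef, the maximality in the definition of $\Env_\XX(A_{\XX/X})$ upgrades this pointwise inequality to an inequality of $b$-divisors. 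Applying Proposition \ref{self_inter}(1) to the pair $\PP_-\le\Env_\XX(A_{\XX/X})$ then delivers $(\PP_-)^n\le\Env_\XX(A_{\XX/X})^n\le 0$, i.e.\ $0\le\volbdff(X)\le\volmov(X)$, and the three inequalities fit together to prove the lemma.

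The only conceptual obstacle is the finiteness $(\PP_-)^n\ne-\infty$, since a Weil upper bound on $\PP_-$ controls $(\PP_-)^n$ from above but \emph{not} from below. I am relying on the fact that the Boucksom–de Fernex–Favre construction quoted in Proposition \ref{self_inter} produces a finite real self-intersection for every $X$-nef $\RR$-Weil $b$-divisor whose center is the isolated singular point. This is the part of the argument where a careful reader might want more detail: if one wanted an elementary bypass one could try to bracket $\PP_-$ between Cartier $X$-nef $b$-divisors by adding multiples of an $f$-ample $H$ on the movable modification $Y$, using that $\PP_\sigma(K_Y+E_f+tH)=\overline{K_Y+E_f+tH}$ is Cartier $X$-nef for $t\gg0$ together with the monotonicity of $P_\sigma$, but as long as Proposition \ref{self_inter} is cited in its full BdFF strength this bypass is unnecessary.
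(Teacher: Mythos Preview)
Your comparison $\volbdff(X)\le\volmov(X)$ is correct and matches the paper: both extract $\PP_-\le\Env_\XX(A_{\XX/X})$ from the proof of Lemma \ref{trace_on_mov} and then invoke Proposition \ref{self_inter}(1).

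The finiteness, however, is a genuine gap. Proposition \ref{self_inter} does \emph{not} assert that $\WW^n$ is a real number for every $X$-nef $\RR$-Weil $b$-divisor centered at $0$; the self-intersection is defined as an infimum over Cartier nef $b$-divisors above $\WW$, and nothing in the cited BdFF results rules out $\WW^n=-\infty$. The finiteness of $\volbdff(X)$ stated in Section~2.5 is a separate fact, proved by bounding $\Env_\XX(A_{\XX/X})$ below by a Cartier nef $b$-divisor, and you need the analogous bound for $\PP_-$. Your proposed bypass does not supply it: adding an $f$-ample $H$ on the movable modification gives $\PP_\sigma(K_Y+E_f+tH)\ge\PP_\sigma(K_Y+E_f)+t\overline H$, which bounds $\PP_\sigma(K_Y+E_f)$ from \emph{above}, not below; and it says nothing about $\Env_X(-K_X)$.

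The paper's fix is to pass to a log resolution $s:W\to X$ carrying an effective $s$-exceptional divisor $H$ with $-H$ $s$-ample (such $H$ exists on a resolution but need not exist on a movable modification, which could even be small). Choosing $m$ so that $K_W+E_s-mH$ and $-s^*K_X-mH$ are both $s$-ample, one gets on any higher model $t:Z\to W$ that
\[
(\PP_-)_Z=P_\sigma(K_Z+E_\phi)+P_\sigma(-t^*s^*K_X)\ge t^*(K_W+E_s-mH)+t^*(-s^*K_X-mH)=t^*(A_{W/X}-2mH),
\]
using Lemma \ref{double_pullback} for the second summand and monotonicity of $P_\sigma$ together with the vanishing of $N_\sigma$ on nef pullbacks. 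This bounds $\PP_-$ below by the Cartier $X$-nef $b$-divisor $\overline{A_{W/X}-2mH}$, whence $\volmov(X)\le -(A_{W/X}-2mH)^n<+\infty$.
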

\begin{proof}
According to \cite[1.12.1]{Kol13}, there exists a log resolution $s:W\to X$
with an effective $s$-exceptional divisor $H$ on $W$ such that $-H$
is $s$-ample. We fix an integer $m$ such that $K_W+E_s-mH$ and
$-s^*K_X-mH$ are both $s$-ample. For any model $\phi:Z\to X$
dominating a movable model and factoring through $s$ via $t:Z\to W$,
we have
$$\begin{aligned}(\PP_-)_Z & =P_\sigma(K_Z+E_\phi)-\phi^*K_X \\
& \geq P_\sigma(t^*(K_W+E_s))+P_\sigma(-t^*(s^*K_X)) \\
& \geq P_\sigma(t^*(K_W+E_s-mH))+P_\sigma(t^*(-s^*K_X-mH)) \\
& = t^*(K_W+E_s-mH)+t^*(-s^*K_X-mH) \\
& = t^*(A_{W/X}-2mH).
\end{aligned}$$

Thus, by Proposition \ref{self_inter},
$$\volmov(X)=-(\PP_-)^n\leq -(A_{W/X}-2mH)^n<+\infty.$$

By the proof of Lemma \ref{trace_on_mov}, we have $\PP_-\leq
\Env_\XX(A_{\XX/X})$. Hence,
$$\volmov(X)=-(\PP_-)^n\geq -(\Env_\XX(A_{\XX/X}))^n=\volbdff(X)\geq
0.$$
\end{proof}

\begin{thm}\label{main_equiv}
$\volmov(X)=0$ if and only if $X$ is $\QQ$-Gorenstein and log canonical.
\end{thm}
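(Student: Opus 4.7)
The strategy is to reduce both implications to the criterion $\PP_- = 0$. Since $\PP_- = \PP_\sigma(K_Y + E_f) + \Env_X(-K_X)$ is a sum of two $X$-nef $\RR$-Weil $b$-divisors by Lemma \ref{sigma_is_nef} and \cite[Corollary 2.13]{BdFF}, it is itself $X$-nef, so Proposition \ref{self_inter}(2) yields
$$\volmov(X) = -(\PP_-)^n = 0 \iff \PP_- = 0.$$

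For the $(\Leftarrow)$ direction, assume $X$ is $\QQ$-Gorenstein and log canonical, and pick a movable modification $f \colon Y \to X$. Theorem \ref{mov_modi} gives $A_{Y/X} \le 0$, while log canonicity forces $A_{Y/X} \ge 0$; hence $K_Y + E_f = f^* K_X$ as $\QQ$-Cartier divisors. For any $\QQ$-factorial $g \colon Z \to Y$ with $\phi = f \circ g$, Lemma \ref{anti_free_natural}(2) applied to $\pm K_X$ shows that $\pm \phi^* K_X$ are both movable, so $g^*(K_Y + E_f) = \phi^* K_X$ is movable and $P_\sigma(g^*(K_Y + E_f)) = \phi^* K_X$, giving $(\PP_-)_Z = 0$. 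For any other model, passing to a common resolution with $Y$ and pushing forward yields trace $0$, so $\PP_- = 0$.

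For the $(\Rightarrow)$ direction, assume $\PP_- = 0$, so that $\PP_\sigma(K_Y + E_f) = -\Env_X(-K_X)$. The left-hand side is $X$-nef with trace $K_X$ on $X$, so the maximality characterization of $\Env_X(K_X)$ yields $\Env_X(K_X) + \Env_X(-K_X) \ge 0$. Conversely, this sum is $X$-nef with trace $0$ on $X$, and Lemma \ref{neg_lemma_b} forces it to be $\le 0$ on every model. Hence $\Env_X(K_X) = -\Env_X(-K_X)$, and Proposition \ref{num_Car}(1) identifies $K_X$ as numerically $\QQ$-Cartier, so $X$ is numerically $\QQ$-Gorenstein. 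Lemma \ref{gen_trace} then gives on any $\QQ$-factorial $Z$ above a movable modification
$$0 = (\PP_-)_Z \le P_\sigma(K_Z + E_\phi) - \phi^* K_X \le A_{Z/X},$$
where by Proposition \ref{num_Car}(3) the Weil pullback $\phi^* K_X$ coincides with the numerical pullback. Thus every log discrepancy is $\ge 0$, so $X$ is numerically log canonical; Proposition \ref{num_lc} upgrades this to $\QQ$-Gorenstein, whereupon the same inequality $A_{Z/X} \ge 0$ becomes the usual log canonical condition.

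The main subtle step is extracting numerical $\QQ$-Cartierness of $K_X$ from $\PP_- = 0$: the two halves of $\Env_X(K_X) + \Env_X(-K_X) = 0$ pull in opposite directions, one from the universal property of $\Env_X(K_X)$ and the other from the Negativity Lemma for $b$-divisors. A secondary but persistent technicality is keeping the Weil, natural, and Cartier (or numerical) pullbacks of $K_X$ properly aligned across the various formulas, which is precisely where Proposition \ref{num_Car}(3) is invoked once numerical $\QQ$-Cartierness has been secured.
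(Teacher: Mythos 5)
Your proof is correct and follows the same overall skeleton as the paper: reduce to $\PP_-=0$ via Proposition \ref{self_inter}, deduce that $X$ is numerically $\QQ$-Gorenstein, use $0=\PP_-\leq A_{\XX/X}$ to get numerically log canonical, and finish with Proposition \ref{num_lc} (your spelled-out converse is the direction the paper dismisses as obvious, and it is fine). The one place you genuinely diverge is the extraction of numerical $\QQ$-Gorensteinness from $\PP_-=0$. The paper works on the movable modification itself: the trace on $Y$ gives $A_{Y/X}=0$, so $f^*K_X=K_Y+E_f$ is $\QQ$-Cartier, and the trace computation on models $Z$ over $Y$ together with Lemma \ref{double_pullback} forces $P_\sigma(\pm g^*(f^*K_X))=\pm g^*(f^*K_X)$, so that $\pm\Env_X(\mp K_X)$ are $\QQ$-Cartier $b$-divisors determined by $f$ and Lemma \ref{nef_determinant} makes $f^*K_X$ $f$-numerically trivial. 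You instead argue entirely at the level of $b$-divisors: $\PP_\sigma(K_Y+E_f)=-\Env_X(-K_X)$ is $X$-nef with trace $K_X$, so the maximality of $\Env_X(K_X)$ gives $\Env_X(K_X)+\Env_X(-K_X)\geq 0$, while Lemma \ref{neg_lemma_b} applied to this $X$-nef sum with trace $0$ gives $\leq 0$, whence $\Env_X(-K_X)=-\Env_X(K_X)$ and Proposition \ref{num_Car}(1) applies. Your route is slightly slicker and avoids Lemmas \ref{double_pullback} and \ref{nef_determinant}, at the cost of leaning on the characterization of $\Env_X$ and on \cite[Proposition 5.9]{BdFFU}; the paper's route has the small bonus of exhibiting a concrete model ($Y$ itself) on which $f^*K_X$ is $\QQ$-Cartier and $f$-numerically trivial. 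Two cosmetic points: for models dominating the movable modification you can quote equality from Lemma \ref{well_def} rather than the inequality of Lemma \ref{gen_trace} (though the inequality suffices), and the identification of the Weil pullback with the numerical pullback is really the content of Proposition \ref{num_Car}(1) (equality of traces of $\mp\Env_X(\pm K_X)$); part (3) presupposes a model where $f^*K_X$ is $\QQ$-Cartier, which at that point you have not yet produced. Neither affects the validity of the argument.
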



\begin{proof}
Suppose that $\volmov(X)=-(\PP_-)^n=0$. By Proposition \ref{self_inter}, we
have that $\PP_-=0$. Let $f:Y\to X$ be a movable modification,
$g:Z\to Y$ be any $\QQ$-factorial model over $Y$ and $\phi=f\circ
g$. Then $0=(\PP_-)_Y=A_{Y/X}$, hence $K_Y+E_f=f^*K_X$. In
particular, $f^*K_X$ is $\QQ$-Cartier. For the trace on $Z$, we have
$$\begin{aligned}&0=(\PP_-)_Z=P_\sigma(g^*(K_Y+E_f))-\phi^*K_X\\=\; & P_\sigma(g^*(f^*K_X))+P_\sigma(-g^*(f^*K_X))\\
\leq\; & g^*(f^*K_X)-g^*(f^*K_X)=0,\end{aligned}$$ where the second
row follows from Lemma \ref{double_pullback}. In particular, we
obtain that $\Env_X(-K_X)=\PP_\sigma(-f^*K_X)$ is a $\QQ$-Cartier
$b$-divisor determined by $f$. Thus, $-f^*K_X$ is $f$-nef by
Lemma \ref{nef_determinant}. Similarly, since $P_\sigma(g^*(f^*K_X))=g^*(f^*K_X)$, we have that $f^*K_X$ is also $f$-nef. Thus, $f^*K_X$ is $f$-numerically trivial and $X$ is numerically $\QQ$-Gorenstein. The inequality $0=\PP_-\leq A_{\XX/X}$ yields that $X$ is numerically log canonical. We may apply Proposition \ref{num_lc} to conclude the proof. The other direction is obvious.
\end{proof}

\section{Finite Endomorphisms}

We briefly review the pullback of Weil $b$-divisors.

Let $\phi:Y\to X$ be a generically finite dominant morphism between normal varieties.  Every divisorial valuation $\nu$ on $Y$ induces a divisorial valuation $\phi_*\nu$ via the natural inclusion of the function field $\phi^*:k(X)\hookrightarrow k(Y)$ given by $\phi_*\nu(f)=\nu(f\circ \phi)$ \cite[Lemma 1.13]{BdFF}. In other words, suppose that $F$ is a prime divisor on $X'$ over $X$. Then there is a birational model $Y'$ over $Y$ such that $\phi$ lifts to a morphism $\phi':Y'\to X'$. We may obtain such $Y'$ by blowing up the indeterminancy of the rational map $Y\dashrightarrow X'$. If $E$ is an irreducible component of $(\phi')^{-1}F$ such that $\phi'(E)=F$, then $\phi_*\nu_E$ is a scalar multiple of $\nu_F$ with the scalar $\nu_E(\phi'^*F)$.

Let $W$ be an $\RR$-Weil $b$-divisor on $X$. We define the pullback of $W$ as the $\RR$-Weil $b$-divisor $\phi^*W$ such that $\nu_E(\phi^*W)=(\phi_*\nu_E)(W)$.

For a normal variety $Z$, we denote by $K_\ZZ$ the Weil $b$-divisor given by a canonical divisor on each model and $E_\ZZ$ the Weil $b$-divisor given by the reduced exceptional divisor over $Z$ on each model.

\begin{lem}\label{finite_morp} \cite[Lemma 2.19 and 3.3]{BdFF}
Let $\phi:Y\to X$ be a finite dominant morphism between normal varieties. Then
\begin{enumerate}
\item If $D$ is an $\RR$-Weil divisor on $X$, then $\Env_Y(\phi^*D)=\phi^*\Env_X(D)$.
\item If $\WW$ is an $\RR$-Weil $b$-divisor over $X$ such that
$\Env_\XX(\WW)$ is well-defined, then
$\Env_\YY(\phi^*\WW)=\phi^*\Env_\XX(\WW)$.
\item If $E$ is a prime exceptional divisor over $Y$, then $$\nu_E(K_\YY+E_\YY)=\nu_E(\phi^*(K_\XX+E_\XX)).$$
\item If $\phi$ is \'etale in codimension 1, then $K_\YY+E_\YY=\phi^*(K_\XX+E_\XX)$.
\end{enumerate}
\end{lem}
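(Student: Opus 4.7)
The plan is to prove the four parts in order, with (1) doing the real work and (2)--(4) reducing to valuative bookkeeping on top of it.

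For (1), I fix a birational model $g\colon Y'\to Y$ and compare the traces of $\Env_Y(\phi^*D)$ and $\phi^*\Env_X(D)$ on $Y'$. By resolving indeterminacy, I choose a birational model $f\colon X'\to X$ and a lift $\phi'\colon Y'\to X'$ so that $f\circ\phi'=\phi\circ g$. The trace of $\Env_Y(\phi^*D)$ on $Y'$ is $-g^*(-\phi^*D)$ by definition, while the trace of $\phi^*\Env_X(D)$ on $Y'$ is computed prime-by-prime through the valuative formula $\nu_E(\phi^*\Env_X(D))=\nu_E((\phi')^*F)\cdot\nu_F(\Env_X(D))$ whenever $F=c_{X'}(\phi_*\nu_E)$ is a prime divisor. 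Since the trace of $\Env_X(D)$ on $X'$ is $-f^*(-D)$, the identification reduces to showing that $(\phi')^*$ commutes with natural pullback after taking $\liminf$ in $m$, which follows because the reflexive hull of $(\phi')^*f^\natural(-mD)$ coincides with $(f\circ\phi')^\natural(-mD)=g^\natural(\phi^*(-mD))$ using finiteness of $\phi'$. Passing to the limit gives the equality.

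For (2), pullback by a finite morphism preserves both the order on $\RR$-Weil $b$-divisors and the $X$-nef property (since a determinant pulls back to a determinant and nefness pulls back under finite morphisms). Hence $\phi^*\Env_\XX(\WW)\leq\Env_\YY(\phi^*\WW)$. For the reverse, apply (1) to the traces on a $\QQ$-factorial determinant: the trace of $\Env_\YY(\phi^*\WW)$ there is controlled by $\Env_{Y'}((\phi^*\WW)_{Y'})$, which by (1) agrees with the trace of $\phi^*\Env_\XX(\WW)$; the characterization of the envelope as a supremum then propagates equality globally.

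For (3), pick a model $Y'\to Y$ containing $E$ as a prime divisor and a lift $\phi'\colon Y'\to X'$ such that $F:=c_{X'}(\phi_*\nu_E)$ is a prime divisor. Finiteness of $\phi$ forces $F$ to be exceptional over $X$, because $\phi(c_Y(E))$ has codimension $\geq 2$ in $X$. Setting $r=\nu_E((\phi')^*F)$, the valuative pullback gives $\nu_E(\phi^*(K_\XX+E_\XX))=r\cdot\nu_F(K_{X'}+E_{X'/X})=r(a_F(X)+1)$, whereas $\nu_E(K_\YY+E_\YY)=a_E(Y)+1$. Riemann--Hurwitz applied to the finite cover $\phi'$ yields $K_{Y'}=(\phi')^*K_{X'}+R_{\phi'}$ with $\ord_E R_{\phi'}=r-1$, and combining with $\ord_E (\phi')^*E_{X'/X}=r$ (since $F\subseteq E_{X'/X}$ with coefficient one) produces precisely $a_E(Y)+1=r(a_F(X)+1)$. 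Part (4) then reduces to (3) together with the observation that \'etale-in-codimension-one forces $r=1$ along every codimension-one branch in $X$, so $K_Y=\phi^*K_X$ at the base; the $b$-divisor identity follows by checking on each prime divisorial valuation.

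The main obstacle is (3): keeping the ramification index $r$ and the ``$+1$'' from the reduced exceptional $b$-divisor in sync so that the Riemann--Hurwitz contribution of $r-1$ cancels against $(\phi')^*E_{X'/X}$ giving $r$ rather than $1$ along $E$. Once that bookkeeping is clean, the remaining parts are formal consequences of the valuative definitions and the supremum characterization of the envelope.
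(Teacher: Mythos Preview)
Your argument for (3) and (4) is essentially the paper's own proof: pick smooth models $Y'\ni E$ and $X'\ni F$ with a lift $\phi'$, record the ramification index $r=\nu_E((\phi')^*F)$, use $K_{Y'}=(\phi')^*K_{X'}+R_{\phi'}$ with $\ord_E R_{\phi'}=r-1$, and conclude $\nu_E(K_{Y'})+1=r(\nu_F(K_{X'})+1)$; for (4) the \'etale-in-codimension-one hypothesis gives $K_Y=\phi^*K_X$, handling the non-exceptional valuations. For (1) and (2) the paper simply cites \cite[Lemma~2.19]{BdFF}, so your sketches go beyond what the paper records; they are in the right spirit, though the step ``reflexive hull of $(\phi')^*f^\natural(-mD)$ coincides with $(f\circ\phi')^\natural(-mD)$'' uses that $\phi$ (not $\phi'$, which is only generically finite) is finite to identify $\OO_Y(-\phi^*(mD))$ with the reflexive hull of $\phi^*\OO_X(-mD)$, and you should say that more carefully.

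One notational slip to fix: in (3) you write $\nu_F(K_{X'}+E_{X'/X})=a_F(X)+1$ and $\nu_E(K_{Y'}+E_{Y'/Y})=a_E(Y)+1$. The symbols $a_F(X)$, $a_E(Y)$ normally denote discrepancies $\ord_F(K_{X'}-f^*K_X)$, which are not defined here since $X$ need not be $\QQ$-Gorenstein; what you are actually using (and what the paper uses) is simply the coefficient $\nu_F(K_{X'})$ of $F$ in the chosen canonical divisor $K_{X'}$. Replace $a_F(X)$ by $\nu_F(K_{X'})$ throughout and the computation is clean.
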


\begin{proof}
Part (1) and (2) are the same as \cite[Lemma 2.19]{BdFF}.

Let $E$ be an exceptional divisor over $Y$. Let $X'$ be a smooth model over $X$  such that the center of $\phi_*\nu_E$ on $X'$ is a divisor $F$. Let $Y'$ be a smooth model over $Y$ such that $E$ is a divisor on $Y'$ and $\phi':Y'\dashrightarrow X'$ is a morphism. We summarize this in the following diagram:
$$\xymatrix{
**[l]E\subset Y'\ar^{\phi'}[r]\ar_{g}[d]&**[r]X'\supset F\ar^{f}[d]\\
Y\ar^{\phi}[r]&**[r]X. }$$ We have $\phi_*\nu_E=b\nu_F$ where
$b=\nu_E(\phi'^*F)$. Hence the ramification order of $\phi'$ at the
generic point of $E$ is $b-1$. We get
$$\nu_E(K_{Y'})=\nu_E(\phi'^*K_{X'})+b-1=b\nu_F(K_{X'})+b-1.$$ Thus
$$\nu_E(K_\YY+E_\YY)=\nu_E(K_{Y'})+1=b\nu_F(K_{X'})+b=\phi_*\nu_E(K_\XX+E_\XX).$$
Part (3) follows.

If $\phi$ is \'etale in codimension 1, then $K_Y=\phi^*K_X$. Hence,
$K_\YY+E_\YY$ and $\phi^*(K_\XX+E_\XX)$ also coincide on
non-exceptional valuations.
\end{proof}

\begin{lem}\label{env_vs_sigma}
Let $f:Y\to X$ be a movable modification. Then
$$\Env_\XX(K_\XX+E_\XX)=\PP_\sigma(K_Y+E_f).$$
In particular, $\PP_-=\Env_\XX(K_\XX+E_\XX)+\Env_X(-K_X)$.
\end{lem}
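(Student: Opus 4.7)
The plan is to prove the main equation $\Env_\XX(K_\XX+E_\XX)=\PP_\sigma(K_Y+E_f)$ by establishing both inequalities between these two $X$-nef $\RR$-Weil $b$-divisors; the ``in particular'' statement is then immediate from the definition $\PP_- = \PP_\sigma(K_Y+E_f) + \Env_X(-K_X)$ by adding $\Env_X(-K_X)$ to both sides. Throughout I will lean on the characterization that $\Env_\XX(K_\XX+E_\XX)$ is the largest $X$-nef $\RR$-Weil $b$-divisor $\leq K_\XX+E_\XX$, together with Lemma \ref{sigma_is_nef}, Lemma \ref{nef_vs_mov}, Proposition \ref{sigma_decom}(2), and Lemma \ref{well_def}.

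For the inequality $\PP_\sigma(K_Y+E_f) \leq \Env_\XX(K_\XX+E_\XX)$, I would first recall that $\PP_\sigma(K_Y+E_f)$ is $X$-nef by Lemma \ref{sigma_is_nef}. Then I would check it lies below $K_\XX+E_\XX$ trace by trace: for any model $Z$, pick a common resolution $W$ with maps $s\colon W\to Y$ and $t\colon W\to Z$. Since $(Y,E_f)$ is dlt, $s^*(K_Y+E_f) \leq K_W + E_W$, hence $P_\sigma(s^*(K_Y+E_f)) \leq K_W+E_W$, and pushing forward by $t$ gives $t_*P_\sigma(s^*(K_Y+E_f)) \leq K_Z + E_Z$, which is exactly the desired inequality of traces. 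Maximality of the nef envelope finishes this direction.

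The reverse inequality $\Env_\XX(K_\XX+E_\XX) \leq \PP_\sigma(K_Y+E_f)$ is the more delicate half. Since both sides are $\RR$-Weil $b$-divisors and the inequality of $b$-divisors is preserved under pushforward, it suffices to verify it on a cofinal family of models, namely the $\QQ$-factorial models $Z$ that dominate $Y$ via some $g\colon Z\to Y$ (setting $\phi = f\circ g$). On such a $Z$, the trace of $\Env_\XX(K_\XX+E_\XX)$ lies in $\Mov(Z/X)$ by Lemma \ref{nef_vs_mov}, and is bounded above by $K_Z+E_\phi$. By Proposition \ref{sigma_decom}(2) the largest movable $\RR$-divisor $\leq K_Z+E_\phi$ is $P_\sigma(K_Z+E_\phi)$, so the trace is $\leq P_\sigma(K_Z+E_\phi)$. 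Invoking Lemma \ref{well_def}, $P_\sigma(K_Z+E_\phi) = P_\sigma(g^*(K_Y+E_f))$, which by the $\sigma$-closure definition (taking $W=Z$, $s=g$, $t=\mathrm{id}$) is precisely the trace of $\PP_\sigma(K_Y+E_f)$ on $Z$.

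The main conceptual obstacle is realizing that one must work on $\QQ$-factorial models dominating $Y$ rather than arbitrary models: the movability characterization of $X$-nef $b$-divisors (Lemma \ref{nef_vs_mov}) requires $\QQ$-factoriality, and compatibility of $P_\sigma(g^*(K_Y+E_f))$ with the $\sigma$-closure (Lemma \ref{well_def}) requires dominance of $Y$. Once this is in place, both inequalities are direct applications of results already in the paper, and the ``in particular'' statement is obtained by simply adding the $\Env_X(-K_X)$ term to both sides.
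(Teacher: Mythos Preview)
Your proposal is correct and follows essentially the same approach as the paper's proof. Both arguments establish the two inequalities in the same way: $\PP_\sigma(K_Y+E_f)\leq\Env_\XX(K_\XX+E_\XX)$ via $X$-nefness of the $\sigma$-closure together with $\PP_\sigma(K_Y+E_f)\leq K_\XX+E_\XX$, and the reverse inequality by restricting to $\QQ$-factorial models $Z$ dominating $Y$, using that the trace of the envelope is movable and bounded by $K_Z+E_\phi$, hence by $P_\sigma(K_Z+E_\phi)$, and then identifying this with the trace of $\PP_\sigma(K_Y+E_f)$ via Lemma~\ref{well_def}. The only cosmetic difference is that for the first inequality you invoke the dlt condition directly (as in Lemma~\ref{trace_on_mov}), whereas the paper packages this step through Lemma~\ref{well_def}; the content is the same.
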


\begin{proof}
The proof is similar to Lemma \ref{trace_on_mov}. Let $g:Z\to X$ be
any $\QQ$-factorial model factoring through $f$ via $\phi:Z\to Y$.
By Lemma \ref{well_def}, the trace on $Z$ satisfies:
$$(\PP_\sigma(K_Y+E_f))_Z=\PP_\sigma(K_Z+E_g)\leq K_Z+E_g.$$
Hence, $(\Env_\XX(K_\XX+E_\XX))_Z\leq(\PP_\sigma(K_Y+E_f))_Z.$ On
the other hand, $\PP_\sigma(K_Y+E_f)\leq K_\XX+E_\XX$. By the
definition of nef envelope, we have $\Env_\XX(K_\XX+E_\XX)\geq
\PP_\sigma(K_Y+E_f)$.
\end{proof}


We are ready to prove the main theorem.

\begin{thm}\label{main_finite}
Let $\phi:(Y,0)\to (X,0)$ be a finite morphism of degree $d$ between
normal isolated singularities such that $\phi$ is \'etale in codimension 1. Then
\begin{enumerate}
\item $\PP_{-,Y}= \phi^*\PP_{-,X}$.
\item $\volmov(Y)= d\volmov(X)$.
\end{enumerate}
\end{thm}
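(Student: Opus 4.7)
The proof plan is to deduce (1) directly from the compatibility of the nef envelope with finite pullbacks, and then to obtain (2) by invoking the self-intersection formula for finite maps in Proposition \ref{self_inter}.

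First, using Lemma \ref{env_vs_sigma}, I would rewrite
$$\PP_{-,X} = \Env_\XX(K_\XX + E_\XX) + \Env_X(-K_X),$$
and analogously for $Y$. Since $\phi$ is \'etale in codimension one, its ramification divisor is zero, so $K_Y = \phi^*K_X$ as Weil divisors on $Y$. Lemma \ref{finite_morp}(4) promotes this to the $b$-divisor identity $K_\YY + E_\YY = \phi^*(K_\XX + E_\XX)$. Then Lemma \ref{finite_morp}(2) applied to the $b$-divisor $K_\XX + E_\XX$, together with Lemma \ref{finite_morp}(1) applied to the Weil divisor $-K_X$, gives
$$\Env_\YY(K_\YY + E_\YY) = \phi^*\Env_\XX(K_\XX + E_\XX), \qquad \Env_Y(-K_Y) = \phi^*\Env_X(-K_X).$$
Adding these two identities (and using additivity of $\phi^*$ on $b$-divisors) yields (1).

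For (2), I would observe that $\PP_{-,X}$ is a sum of two $X$-nef $\RR$-Weil $b$-divisors: the summand $\Env_\XX(K_\XX + E_\XX)$ equals $\PP_\sigma(K_Y+E_f)$ for a movable modification $f:Y\to X$ by Lemma \ref{env_vs_sigma}, and so is $X$-nef by Lemma \ref{sigma_is_nef}, while $\Env_X(-K_X)$ is $X$-nef as a nef envelope. Because $0 \in X$ is isolated and $\PP_{-,X}$ is centered there, Proposition \ref{self_inter}(3) applies to give $(\phi^*\PP_{-,X})^n = d\,(\PP_{-,X})^n$. Combining with (1),
$$\volmov(Y) = -(\PP_{-,Y})^n = -(\phi^*\PP_{-,X})^n = -d\,(\PP_{-,X})^n = d\,\volmov(X).$$

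I do not anticipate a serious obstacle: essentially all of the nontrivial content has been pre-packaged in Lemma \ref{finite_morp} and Proposition \ref{self_inter}. The only delicate point is that the \'etale-in-codimension-one hypothesis must be used exactly at the step converting the Weil identity $K_Y = \phi^*K_X$ into the $b$-divisor identity $K_\YY + E_\YY = \phi^*(K_\XX + E_\XX)$ via Lemma \ref{finite_morp}(4); without this, part (3) of that lemma only controls the exceptional valuations, and the argument for (1) would break down on the non-exceptional part.
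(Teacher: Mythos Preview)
Your proposal is correct and follows essentially the same approach as the paper: rewrite $\PP_{-}$ via Lemma~\ref{env_vs_sigma}, apply Lemma~\ref{finite_morp}(1),(2),(4) to pull back each summand, and then invoke Proposition~\ref{self_inter}(3) for part~(2). Your explicit check that $\PP_{-,X}$ is $X$-nef and centered at $0$ (so that Proposition~\ref{self_inter}(3) applies) is a detail the paper leaves implicit.
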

\begin{proof}
By Lemma \ref{finite_morp} and \ref{env_vs_sigma},
$$\begin{aligned}
\PP_{-,Y}&=\Env_\YY(K_\YY+E_\YY)+\Env_Y(-K_Y) \\
&= \Env_\YY(\phi^*(K_\XX+E_\XX))+\Env_Y(-\phi^*K_X) \\
&= \phi^*\Env_\XX(K_\XX+E_\XX))+\phi^*\Env_X(-K_X) \\
&= \phi^*\PP_{-,X}.
\end{aligned}
$$

Part (2) now follows from Lemma \ref{self_inter}.
%
\end{proof}

\begin{thm}\label{main_volmov=0}
If $\phi:(X,0)\to (X,0)$ is a finite endomorphism of degree $d \geq
2$ such that $\phi$ is \'etale in codimension 1, then $\volmov(X)=0$.
\end{thm}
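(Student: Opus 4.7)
The plan is to derive this immediately from Theorem \ref{main_finite} applied to the endomorphism $\phi$ itself, combined with the finiteness of $\volmov$ from Lemma \ref{vol_pos}.

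Specifically, taking $Y=X$ in Theorem \ref{main_finite}(2), the hypothesis that $\phi:(X,0)\to(X,0)$ is a finite morphism of degree $d$ that is \'etale in codimension 1 yields the identity
\[
\volmov(X) \;=\; d\cdot\volmov(X).
\]
By Lemma \ref{vol_pos}, $\volmov(X)$ is a finite nonnegative real number, so we may rewrite this as $(d-1)\volmov(X)=0$. Since $d\geq 2$, the factor $d-1$ is a positive integer, forcing $\volmov(X)=0$.

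The main obstacle has already been resolved in establishing Theorem \ref{main_finite}; namely, the behavior of $\PP_-$ under finite morphisms \'etale in codimension 1, and the compatibility of self-intersection with finite pullback recorded in Proposition \ref{self_inter}(3). Once those are in hand, the present theorem is purely a one-line consequence analogous to the classical argument for $\volbdff$ in \cite{BdFF}.
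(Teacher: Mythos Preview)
Your proof is correct and matches the paper's own argument essentially verbatim: apply Theorem \ref{main_finite}(2) with $Y=X$ to get $\volmov(X)=d\cdot\volmov(X)$, then invoke finiteness and nonnegativity (Lemma \ref{vol_pos}) together with $d\geq 2$ to conclude $\volmov(X)=0$.
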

\begin{proof}
By Theorem \ref{main_finite}, $\volmov(X)= d\volmov(X)$. But
$\volmov(X)$ is a nonnegative finite number. There must be
$\volmov(X)=0$.
\end{proof}

\end{document}